\patchcmd{\subsubsection}{\scshape}{\bf}{}{}
\patchcmd{\subsubsubsection}{\scshape}{\bf}{}{}
\renewcommand{\tocsection}[3]{%
  \indentlabel{\@ifnotempty{#2}{\bfseries\ignorespaces#1 #2\quad}}\bfseries#3}
\renewcommand{\tocsubsection}[3]{%
  \indentlabel{\@ifnotempty{#2}{\ignorespaces#1 #2\quad}}#3}
\renewcommand{\tocsubsubsection}[3]{%
  \indentlabel{\@ifnotempty{#2}{\hspace{1.4cm}\ignorespaces#1 #2\quad}}#3}
\pretocmd{\chapter}{\addtocontents{toc}{\protect\addvspace{15\p@}}}{}{}
\pretocmd{\section}{\addtocontents{toc}{\protect\addvspace{5\p@}}}{}{}
\newcommand\@dotsep{4.5}
\def\@tocline#1#2#3#4#5#6#7{\relax
  \ifnum #1>\c@tocdepth 
  \else
    \par \addpenalty\@secpenalty\addvspace{#2}%
    \begingroup \hyphenpenalty\@M
    \@ifempty{#4}{%
      \@tempdima\csname r@tocindent\number#1\endcsname\relax
    }{%
      \@tempdima#4\relax
    }%
    \parindent\z@ \leftskip#3\relax \advance\leftskip\@tempdima\relax
    \rightskip\@pnumwidth plus1em \parfillskip-\@pnumwidth
    #5\leavevmode\hskip-\@tempdima{#6}\nobreak
    \leaders\hbox{$\m@th\mkern \@dotsep mu\hbox{.}\mkern \@dotsep mu$}\hfill
    \nobreak
    \hbox to\@pnumwidth{\@tocpagenum{\ifnum#1=1\bfseries\fi#7}}\par
    \nobreak
    \endgroup
  \fi}
\renewcommand\csname r@tocindent0\endcsname{0pt}
\def\l@subsection{\@tocline{2}{0pt}{2.5pc}{5pc}{}}
\newcommand{\Addresses}{{
  \bigskip
  \footnotesize

	\medskip
	\textsc{Instituto de Ciencias Matem\'aticas (CSIC), C. Nicol\'as Cabrera, 13-15, 28049 Madrid, Spain}\par\nopagebreak
  \textit{E-mail address}: \qquad \texttt{angel\_castro@icmat.es},\quad  \texttt{dcg@icmat.es},\quad \texttt{daniel.lear@icmat.es}  }}
\newtheorem{thm}{Theorem}[section]
\newtheorem*{prop*}{Proposition}
\newtheorem{lemma}[thm]{Lemma}
\newcommand{\R}{\mathbb{R}}
\newcommand{\Z}{\Bbb{Z}}
\newcommand{\N}{\Bbb{N}}
\newcommand{\pa}{\partial}
\newcommand{\Blue}{\textcolor{blue}}
\begin{document}
\vspace*{-0.1 cm}
\title[2D Boussinesq Equations with a Velocity Damping Term]{\textsc{On the Asymptotic Stability of Stratified Solutions\\ for the 2D Boussinesq Equations  with a Velocity Damping Term}}

\author{\'Angel Castro, Diego C\'ordoba and Daniel Lear}
\date{\today}
\maketitle
\vspace{-1 cm}
\begin{abstract}
We consider the 2D Boussinesq equations with a velocity damping term in a strip $\mathbb{T}\times[-1,1]$, with impermeable walls. In this physical scenario, where the \textit{Boussinesq approximation} is accurate when density/temperature  variations are small, our main result is the asymptotic stability for a specific type of perturbations of a stratified solution. To prove this result, we use a suitably weighted energy space  combined with linear decay, Duhamel's formula and ``bootstrap'' arguments.

\end{abstract}


\tableofcontents

\section{Introduction}

The fundamental issue of regularity vs finite time blow up question for the 3D Euler equation remains outstandingly open and the study of the 2D Boussinesq equations may shed light on this extremely challenging problem. As pointed out in \cite{Majda-Bertozzi}, the 2D Boussinesq equations are identical to the 3D Euler equations under the hypothesis of axial symmetry with swirl. The behavior of solutions to the 2D Boussinesq system and the axi-symmetric 3D Euler equations away from the symmetry axis $r=0$ should be ``identical''.

The Boussinesq equations for inviscid, incompressible 2D fluid flow are given by 
\begin{equation}\label{Inviscid_Boussineq}[\text{2D Boussinesq}] \qquad
\left\{
\begin{array}{rl}
\partial_t\varrho +\mathbf{u}\cdot\nabla \varrho &= 0, \qquad (\mathbf{x},t)\in \R^2\times\R^{+} \\
\partial_t\mathbf{u}+\left(\mathbf{u}\cdot\nabla\right)\mathbf{u}+\nabla p&= g(0,\varrho)\\
\nabla\cdot\mathbf{u}&=0 \\
\mathbf{u}|_{t=0}&=\mathbf{u}(0)\\
\varrho|_{t=0}&=\varrho(0)
\end{array}
\right.
\end{equation}
where $\mathbf{u}=(u_1,u_2)$ is the incompressible velocity field, $p$ is the pressure, $g$ is the acceleration due to gravity and $\varrho$ corresponds to the temperature transported without diffusion by the fluid.\

Boussinesq systems are widely used to model the dynamics of the ocean or the atmosphere, see e.g. \cite{Majda} or \cite{Pedlosky}. They arise from the density dependent fluid equations by using the so-called \textit{Boussinesq approximation} which consists in neglecting the density dependence in all the terms but the one involving the gravity. We refer to \cite{Salmon} for a rigorous justification.

Global regularity of solutions is known when classical dissipation is present  in at least one of the equations (see \cite{Chae}, \cite{Hou-Li}), or under a variety of more general conditions on dissipation (see e.g. \cite{Cao-Wu}  for more information).

In contrast, the global regularity problem on the inviscid 2D Boussinesq equations appears to be out of reach in spite of the progress on the local well-posedness and regularity criteria. Several analytic and numerical results on the inviscid 2D Boussinesq equations are available in \cite{Chae-Ki-Nam}, \cite{E-Shu}, \cite{Widmayer}, \cite{Elgindi-Widmayer} \cite{Hassainia-Hmidi}.

In the class of  temperature-patch type solutions  with no diffusion and viscosity in the whole space, there is a vast literature, see for example \cite{Danchin-Zhang}, \cite{Gancedo-Juarez} and references therein. 
\\

This work is partially aimed to understand the global existence  problem by examining how damping affects the regularity of the solutions to the inviscid 2D Boussinesq equations. In the present article we investigate the following system: 
\begin{equation}\label{Damping_Boussineq}
[\text{2D damping Boussinesq}] \qquad
\left\{
\begin{array}{rl}
\partial_t\varrho +\mathbf{u}\cdot\nabla \varrho &= 0, \qquad (\mathbf{x},t)\in \Omega\times\R^{+} \\
\partial_t\mathbf{u}+\mathbf{u}+\left(\mathbf{u}\cdot\nabla\right)\mathbf{u}+\nabla p&= g(0,\varrho)\\
\nabla\cdot\mathbf{u}&=0 \\
\mathbf{u}|_{t=0}&=\mathbf{u}(0)\\
\varrho|_{t=0}&=\varrho(0).
\end{array}
\right.
\end{equation}
Since these equations are studied on a bounded domain, we take $\mathbf{u}$ to  satisfy  the  no-penetration  condition $\mathbf{u}\cdot\mathbf{n}=0$ on the boundary of the domain $\partial\Omega$ where $\mathbf{n}$ denotes the normal exterior vector.\\

From the mathematical point of view, the  interest of study the 2D Boussinesq system with a velocity damping term follows from the fact that (\ref{Damping_Boussineq}) can be seen as  the limiting case of  fractional dissipation  on the velocity equation without buoyancy diffusion.

From a physical point of view, the previous system appears in the field of electrowetting (EW), which is the modification of the wetting properties of a surface (which is typically hydrophobic) with an applied electric field.
It was developed from elecrtrocapillarity by Lippmann in 1875 \cite{Lippmann} in his PhD thesis, but did not attract much attention until the 1990's, when the applications increased. 

Through rigorous theory and experiments, Lippmann proves a relationship between electrical and surface tension phenomena. This relationship allows for controlling the shape and motion of a liquid meniscus through the use of an applied voltage. The liquid surface changes shape when a voltage is applied in order to minimize the total energy of the system.

More specifically, the system (\ref{Damping_Boussineq}) (without nonlinear term) models droplet motion in a device driven by Electrowetting-On-Dielectric (EWOD), which consists of two closely spaced parallel plates with a droplet bridging the plates and a grid of square electrodes embedded in the bottom plate \cite{Walker-Shapiro}. Applying voltages to the grid allows the droplet to move, split, and rejoin within the narrow space of the plates. 
They model the fluid dynamics by using Hele-Shaw type equations, but an extra term beyond the usual Hele-Shaw flow appear: a time derivative term is included because it may have a large magnitude due to rapidly varying pressure boundary conditions if high-frequency voltage is used to modulate the droplet's contact angles.

\subsection{Hydrodynamic stability}
In our physical system, where there is gravity and stratification ($\mathbf{u}=0$ and $\varrho\equiv\varrho(y)$ is an stationary solution), vertical movement may be penalized while horizontal movement is not. This opens up the perspective of treating the corresponding initial value problems from a perturbative point of view. The basic problem is to consider $\Theta(y)$ a given equilibrium  for (\ref{Damping_Boussineq}), and to study the dynamics of solutions which are close to it in a suitable sense.
Hence, if we write the solution as
\begin{equation}\label{perturbation}
\varrho(x,y,t)=\Theta(y)+\rho(x,y,t) 
\end{equation}
and the pressure term is written as
\begin{equation*}
p(x,y,t)=P(x,y,t)+g\int_{0}^{y}\Theta(s)\,ds.
\end{equation*}
Then, the exact evolution equations for the perturbation become
\begin{equation}\label{General_Theta}
\left\{
\begin{array}{rl}
\partial_t\rho +\mathbf{u}\cdot\nabla \rho &= -\partial_y \Theta\, u_2, \qquad (\mathbf{x},t)\in \Omega\times\R^{+} \\
\partial_t\mathbf{u}+\mathbf{u}+\left(\mathbf{u}\cdot\nabla\right)\mathbf{u}+\nabla P&= g(0,\rho)\\
\nabla\cdot\mathbf{u}&=0 \\
\mathbf{u}|_{t=0}&=\mathbf{u}(0)\\
\rho|_{t=0}&=\rho(0).
\end{array}
\right.
\end{equation}
besides the no-slip condition $\mathbf{u}\cdot\mathbf{n}=0$ on $\partial\Omega$. For hydrodynamic stability questions, naturally $\rho(0)$ is assumed initially small in certain norm. This work is focused on \textit{laminar equilibria}, simple equilibria in which the fluid is moving in well-ordered layers. However, even for these simple configurations, surprisingly little is understood about the near-equilibrium dynamics.

\subsubsection{Background}
The field of hydrodynamic stability has a long history starting in the nineteenth century. One of the oldest problems considered is the stability and instability of shear flows, dating back to, for example, Rayleigh \cite{Rayleigh} and Kelvin \cite{Kelvin}, as well as by many modern authors with new perspectives (see \cite{Drazin-Reid} and references therein). In recent years, this type of problems had attracted renewed interest. For example, the stability of the planar Couette flow in 2D Euler \cite{Bedrossian-Masmoudi} or in  the 2D and 3D  Navier-Stokes equations at high Reynolds number \cite{Bedrossian-Germain-Masmoudi}. Very recently, this was done for the ideal MHD system (where there is viscosity in the momentum equation but there is no resistivity in the magnetic equation), for the two-dimensional case in \cite{Lin-Xu-Zhang} (see also some further results in \cite{Cao-Wu_1}, \cite{Ren-Wu-Xiang-Zhang}). The three-dimensional case was then solved in \cite{Xu-Zhang}, see also \cite{Lin-Zhang}. In the context of the 2D Boussinesq system when  dissipation is present in at least one of the equations see \cite{Doering-Wu-Zhao-Zheng} and \cite{Tao-Wu-Zhao-Zheng}, where the authors study the global well-posedness and stability/instability of perturbations near a special type of hydrostatic equilibrium. Finally, for other type of problems as the $\beta$-plane equation or the IPM equation see \cite{Elgindi-Widmayer}, \cite{Pusateri-Widmayer} and \cite{Elgindi}, \cite{Castro-Cordoba-Lear} respectively.

\subsection{The Rayleigh-B\'enard Stability}
The phenomenon known as Rayleigh-Bernard convection has been studied by a number of authors for many years. The idea is simple: take a container filled with water which is at rest.  Now heat the bottom of the container and cool the top of the container. It has been observed experimentally in \cite{Getling} and mathematically in \cite{E-Shu} that if the temperature difference between the top and the bottom goes beyond a certain critical value, the water will begin to move and convective rolls will begin to form. This effect is called \textit{Rayleigh-B\'ernard instability}. 

In one sentence, we are going to study the opposite of the Rayleigh-Bernard instability. 
Now, in the inverse case, when one cools the bottom and heats the top, it is expected that the system remains stable.
Here the temperature and density are assumed to be proportionally related, so that the cooler fluid is more dense. The gravitational force is thus expected to stabilize such a density (or temperature) distribution. In the presence of viscosity it is not difficult to prove this fact, see \cite{Doering-Gibbon}. However, without the effects of viscosity (or temperature dissipation), it is conceivable for such a configuration to be unstable.

\subsection{Our Setting:}
Under the \textit{Boussinesq approximation}, a physical relevant scenario to study (\ref{Damping_Boussineq}) is where the fluid is confined between infinite planar walls and density/temperature variations are smalls. For this reason, in  the present article, we focus on the stability in Sobolev spaces of the steady state $\Theta(y):=y$  for the 2D damping Boussinesq system setting on the two-dimensional flat strip $\Omega=\mathbb{T}\times[-1,1]$ with no-slip condition $\mathbf{u}\cdot \mathbf{n}=0$ on $\partial\Omega$. In our scenario, this only means that $u_2|_{\partial \Omega}=0$. It is equivalent to assume impermeable boundary conditions for the velocity in top and bottom, together with periodicity conditions in left and right boundaries.

%
The main results of the paper is the asymptotic stability of this particular stratified state $\Theta(y)$ for a specific type of perturbations. A more precise statement of our result is presented as Theorem \ref{main_thm}, where we also illustrate its proof through a bootstrap argument. Despite the apparent simplicity, understanding the stability of this flow is far from being trivial. As in \cite{Elgindi} and in \cite{Widmayer}, in this paper  a key idea is that stratification can be a stabilizing force. It is clear that a fluid with temperature that is inversely proportional to depth is, in some sense, stable.
In fact, we will be able to prove that smooth perturbations of stratified stable solutions are stable for all time in Sobolev spaces.\\

While this article was being written, a preprint by Wan \cite{Wan} appeared, proving global existence for 2D damping Boussinesq with stratification in $\R^2$. However, there are some important differences between \cite{Wan} and our paper. Indeed, we consider horizontal boundaries and periodicity in the $x-$varibles and we get certain asymptotic stability of solutions. In addition, we work at the velocity level instead of working with the vorticity, as it is done in \cite{Wan}, and the functional we use for our energy estimates is also different. These points make both proofs different.\\


In order to solve our problem in the bounded domain $\Omega$, in certain Sobolev space, we have to overcome the following new difficulties:
\begin{enumerate}
	\item[i)] To be able to handle the boundary terms that appear in the computations.
	
	\item[ii)] The lack of higher order boundary conditions at the boundaries, due to the fact that we work in Sobolev spaces.
\end{enumerate}

\noindent
Indeed, both difficulties i) and ii) can be bypass if our initial perturbation and velocity have a special structure. We introduce the following spaces, which we used in \cite{Castro-Cordoba-Lear} to characterize our initial data:
\begin{align}
X^k(\Omega)&:=\{f\in H^{k}(\Omega):\partial_y^{n}f|_{\partial\Omega}=0 \quad \text{for } n=0,2,4,\ldots,k^{\star}\}\label{Space_X},\\
Y^k(\Omega)&:=\{f\in H^{k}(\Omega):\partial_y^{n}f|_{\partial\Omega}=0 \quad \text{for } n=1,3,5,\ldots,k_{\star}\}\label{Space_Y}
\end{align}
where, we defined the auxiliary values of $k^{\star}$ and $k_{\star}$ as follows:
$$k^{\star}:= \begin{cases}
k-2 \qquad k \quad \text{even}\\
k-1 \qquad k \quad \text{odd}
\end{cases} \qquad \text{and }  \qquad k_{\star}:= \begin{cases}
k-1 \qquad k \quad \text{even}\,\\
k-2 \qquad k \quad \text{odd}.
\end{cases} $$
Moreover, for our initial velocity field, we will use the notation $H^k(\Omega)$ for $H^k(\Omega;\Omega)$ and we also define the following functional space:
\begin{equation}
\mathbb{X}^k(\Omega):=\left\lbrace \mathbf{v}\in H^k(\Omega): \mathbf{v}=(v_1,v_2)\in Y^k(\Omega)\times X^k(\Omega)\right\rbrace
\end{equation}
Lastly, we remember that the Trace operator $T:H^{1}(\Omega)\rightarrow L^2(\partial \Omega)$ defined by $T[f]:=f|_{\partial \Omega}$ is bounded for all $f\in H^{1}(\Omega)$. Consequently, all these spaces are well defined.

\subsection{Notation \& Organization}
We shall denote by $(f,g)$ the $L^2$ inner product of $f$ and $g$.
As usual, we use bold for vectors valued functions. Let $\mathbf{u}=(u_1,u_2)$ and $\mathbf{v}=(v_1,v_2)$, we define
$\langle \mathbf{u},\mathbf{v}\rangle=(u_1,v_1)+(u_2,v_2)$.\\
Also, we remember that the natural norm in Sobolev spaces is defined by:
$$||f||_{H^k(\Omega)}:=||f||_{L^2(\Omega)}^2+||\partial^k f||_{L^2(\Omega)}^2, \qquad ||f||_{\dot{H}^k(\Omega)}:=||\partial^k f||_{L^2(\Omega)}^2.$$
For convenience, in some place os this paper, we may use $L^2,\dot{H}^k $ and $H^k$ to stand for $L^2(\Omega), \dot{H}^k(\Omega)$ and $H^{k}(\Omega)$, respectively. Moreover, to avoid clutter in computations, function arguments (time and space) will be omitted whenever they are obvious from context.
Whenever a parameter is carried through inequalities explicitly, we assume that constants in the corresponding $\lesssim$ are independent of it. Finally, for a general function $f:\Omega\rightarrow \R$, we define: 
\begin{equation}\label{f_tilde_bar}
\tilde{f}(y):=\frac{1}{2\pi}\int_{\mathbb{T}}f(x',y)\,dx' \qquad \text{and} \qquad \bar{f}(x,y):=f(x,y)-\tilde{f}(y).
\end{equation}

\noindent
\textbf{Organization of the Paper:}
In Section \ref{Sec_2} we begin by setting up the perturbated problem. We go on to motivate the functional spaces $X^k(\Omega)$ and $Y^k(\Omega)$ where we will work. The key point of working with initial perturbations with the structure given by these spaces it is showed in Section \ref{Sec_3}. Section \ref{Sec_4} contains the proof of the local existence in time for initial data in these spaces, together with a blow-up criterion.  The core of the article is the proof of the energy estimates in Section \ref{Sec_5}.
In Section \ref{Sec_6} we embark on the proof of a Duhamel's type formula for our system together with the study of the decay given by the linearized problem. Finally, in Section \ref{Sec_7} we exploit a bootstrapping argument to prove our theorem.

\section{The Equations}\label{Sec_2}

For our particular choice of $\Theta(y)=y$ and $g=1$, the system (\ref{General_Theta}) reduces to
\begin{equation*}
\left\{
\begin{array}{rl}
\partial_t\rho +\mathbf{u}\cdot\nabla \rho &= - u_2, \qquad (\mathbf{x},t)\in \Omega\times\R^{+} \\
\partial_t\mathbf{u}+\mathbf{u}+\left(\mathbf{u}\cdot\nabla\right)\mathbf{u}+\nabla P&= (0,\rho)\\
\nabla\cdot\mathbf{u}&=0 \\
\mathbf{u}|_{t=0}&=\mathbf{u}(0)\\
\rho|_{t=0}&=\rho(0).
\end{array}
\right.
\end{equation*}
besides the no-slip condition $\mathbf{u}\cdot\mathbf{n}=0$ on $\partial\Omega$. Note that our perturbation $\rho$ does not have to decay in time. Indeed, if we perturb the stationary solution  by a function of $y$ only there is no decay. More specifically, $\rho\equiv\rho(y)$ and $\mathbf{u}=0$ are stationary solutions of this system. 

As our goal is the asymptotic stability and decay to equilibrium of sufficiently small perturbations, this could be a problem. To overcome this difficulty, the orthogonal decomposition of $\rho= \bar{\rho}+\tilde{\rho}$ given by (\ref{f_tilde_bar}) will be considered.\\ 

In order to prove our goal, we plug into the system (\ref{Damping_Boussineq}) the following ansatz:
\begin{align*}
\varrho(x,y,t)&= y+\rho(x,y,t),\\
p(x,y,t)&= \Pi(x,y,t) +\frac{1}{2}y^2+\int_{0}^y \tilde{\rho}(y',t)dy'.
\end{align*}
Then, for the perturbation $\rho$, we obtain the system
\begin{equation}\label{System_rho}
\left\{
\begin{array}{rl}
\partial_t\rho +\mathbf{u}\cdot\nabla \rho &= - u_2, \qquad (\mathbf{x},t)\in \Omega\times\R^{+} \\
\partial_t\mathbf{u}+\mathbf{u}+\left(\mathbf{u}\cdot\nabla\right)\mathbf{u}+\nabla \Pi&= (0,\bar{\rho})\\
\nabla\cdot\mathbf{u}&=0 \\
\mathbf{u}|_{t=0}&=\mathbf{u}(0)\\
\rho|_{t=0}&=\rho(0).
\end{array}
\right.
\end{equation}
besides the boundary condition $\mathbf{u}\cdot \mathbf{n}=0$ on $\partial\Omega$. The evolution equation for the perturbation $\rho$ of the previous system (\ref{System_rho}) can be rewritten in terms of $\bar{\rho}$ and $\tilde{\rho}$ as follows:
\begin{equation}\label{System_rho_tilde_barra}
\begin{cases}
\partial_t \bar{\rho}+\overline{\textbf{u}\cdot \nabla \bar{\rho}}=-(1+\partial_y\tilde{\rho})\,u_2\\
\partial_t \tilde{\rho}+ \widetilde{\textbf{u}\cdot \nabla \bar{\rho}}=0\\
\end{cases}
\end{equation}
Notice that $\tilde{\rho}$ is always a function of $y$ only and $\bar{\rho}$ has zero average in the horizontal variable. It is expected that $\bar{\rho}$ will decay in time and $\tilde{\rho}$ will just remain bounded.
The systems (\ref{System_rho}) and (\ref{System_rho_tilde_barra}) are the same, but depending on what we need, we will work with one or the other. 

\section{Mathematical setting and preliminares}\label{Sec_3}
In this section, we will see the importance of selecting carefully our initial perturbation $\rho(0)\in X^k(\Omega)$ and our initial velocity $\mathbf{u}(0)\in\mathbb{X}^k(\Omega)$.  Moreover, two adapted orthonormal basis for them are considered, together with their eigenfunction expansion.

\subsection{Motivation of the spaces $X^k(\Omega)$, $Y^k(\Omega)$ and $\mathbb{X}^k(\Omega)$}
By the no-slip condition $u_2(t)|_{\partial\Omega}=0$, the solution $\rho(t)$ of $(\ref{System_rho})$ satisfies on the boundary of our domain the following transport equation:
\begin{equation}\label{transport_eq}
\partial_t\rho(t)|_{\partial\Omega}+u_1(t)\partial_x\rho(t)|_{\partial\Omega}=0
\end{equation}
As our objective is the global stability and decay to equilibrium of sufficiently small perturbations, it seems natural to consider $\rho(0)|_{\partial\Omega}=0$. Then, by the transport character of (\ref{transport_eq}) the initial condition is preserved in time $\rho(t)|_{\partial\Omega}=0$ as long as the solution exists. In addition, applying the \emph{curl} on the evolution equation of the velocity field, using the incompressibility condition, and restringing to the boundary, we have that: 
$$\partial_t (\partial_y u_1)(t)|_{\partial\Omega}=-(\partial_y u_1)(t)|_{\partial\Omega}-u_1(t)\partial_x (\partial_y u_1)(t)|_{\partial\Omega}$$
just because $\rho(t)|_{\partial\Omega}=u_2(t)|_{\partial\Omega}=0$. Therefore, we find that $\partial_y u_1(0)|_{\partial\Omega}=0$ implies that $\partial_t (\partial_y u_1)(t)|_{\partial\Omega}=0$, and consequently the condition on the boundary is preserved in time. Hence, by the incompressibility of the velocity, we get:
\begin{equation}\label{pa_u_boundary}
\partial_y u_1(t)|_{\partial\Omega}=0 \quad \text{and} \quad \partial_y^2 u_2(t)|_{\partial\Omega}=0.
\end{equation}
Previous relations (\ref{pa_u_boundary}) give to the following equation for the restriction to the boundary of the derivative in time of $\partial_y^2\rho(t)$:
\begin{align*}
\partial_t\partial_y^{2}\rho(t)|_{\partial\Omega}= -u_1(t)\partial_x(\partial_y^2\rho)(t)|_{\partial\Omega}-\partial_y u_2(t) \partial^2_y\rho(t)|_{\partial\Omega}.
\end{align*}
Therefore we find that $\partial^2_y\rho(0)|_{\partial\Omega}=0$ implies that $\partial_t\partial^2_y\rho(t)|_{\partial\Omega}=0$, and consequently the condition on the boundary is preserved in time.

Iterating this procedure we can check that  the conditions $\partial_y^{n}\rho(0)|_{\partial\Omega}=\partial_y^{n}u_2(0)|_{\partial\Omega}=0$  for $n=2,4,...$ and $\partial_y^{n}u_1(0)|_{\partial\Omega}$ for $n=1,3,\ldots$ are preserved  in time. This is the reason why we can look for perturbations $\rho(t)$ in the space $X^k(\Omega)$ and velocity fields $\mathbf{u}(t)$ in $\mathbb{X}^k(\Omega)$, if the initial data belongs to them.

\subsection{An orthonormal basis for $X^k(\Omega)$ and $Y^k(\Omega)$}\label{Sec_Basis}
Let us start by defining the following: 
$$a_p(x):=\frac{1}{\sqrt{2\,\pi}}\, \exp\left( i p x  \right) \quad \text{with } x\in \mathbb{T} \quad \text{for } p\in \Z$$
and
$$b_{q}(y):= \begin{cases}
\cos\left(q y \frac{\pi}{2}  \right) \qquad q \quad \text{odd}\\
\sin\left(q y \frac{\pi}{2}   \right) \qquad q \quad \text{even}
\end{cases} \quad \text{with } y\in[-1,1] \quad \text{for } q\in \N $$
where $\{a_p\}_{p\in\Z}$ and $\{b_{q}\}_{q\in\N}$ are orthonormal basis for $L^2(\mathbb{T})$ and $L^2([-1,1])$ respectively. Indeed, $\{b_{q}\}_{q\in\N}$ consists of eigenfunctions of the operator $S=(1-\pa^2_y)$ with domain $\mathscr{D}(S)=\{f\in H^2[-1,1]\,:\, f(\pm l)=0\}$. Consequently, the product of them $\omega_{p,q}(x,y):=a_{p}(x)\,b_{q}(y)$ is an orthonormal basis for  $L^2(\Omega)$.

Moreover, we define an  auxiliary orthonormal basis for $L^2([-1,1])$ given by
$$c_{q}(y):= \begin{cases}
\sin\left(q y \frac{\pi}{2}  \right) \qquad q \quad \text{odd}\\
\cos\left(q y \frac{\pi}{2}   \right) \qquad q \quad \text{even}
\end{cases} \quad \text{with } y\in[-1,1] \quad \text{for } q\in \N\cup\{0\},$$
cosisting of eigenfunctions of the operator $S=1-\pa^2_y$ with domain $\mathscr{D}(S)=\{f\in H^2[-1,1]\,:\, (\pa_yf)(\pm l)=0\}$.
In the same way as before, the product $\varpi_{p,q}(x,y):=a_{p}(x)\,c_{q}(y)$ is again an orthonormal basis for $L^2(\Omega)$.\\

\noindent
\textbf{Remark:} Let us describe the analogue of Fourier expansion with our eigenfunctions expansion. This is, for $f\in L^2(\Omega)$, we have the $L^2(\Omega)$-conergence given by:
\begin{equation}\label{def_eigenfunction_expansion}
f(x,y)=\sum_{p\in\Z}\,\sum_{q\in \N}\mathcal{F}_{\omega}[f](p,q)\,\omega_{p,q}(x,y) \quad \text{where} \quad \mathcal{F}_{\omega}[f](p,q):=\int_{\Omega}f(x',y')\,\overline{\omega_{p,q}(x',y')}\,dx'dy'
\end{equation}
or
\begin{equation}\label{def_eigenfunction_expansion_base_Y}
f(x,y)=\sum_{p\in\Z}\,\sum_{q\in \N\cup \{0\}}\mathcal{F}_{\varpi}[f](p,q)\,\varpi_{p,q}(x,y) \quad \text{where} \quad \mathcal{F}_{\varpi}[f](p,q):=\int_{\Omega}f(x',y')\,\overline{\varpi_{p,q}(x',y')}\,dx'dy'.
\end{equation}
In the next lemma, we collect the main properties of our basis.
\begin{lemma}\label{properties_basis} The following holds:\
\begin{itemize}
	\item  $\{\omega_{p,q}\}_{(p,q)\in\Z\times\N}$ is an orthonormal base of $X^k(\Omega)$.
	\item  $\{\varpi_{p,q}\}_{(p,q)\in\Z\times\N\cup\{0\}}$ is an orthonormal base of $Y^k(\Omega)$.
\end{itemize}
Moreover, let $f\in X^k(\Omega)$ and $g\in Y^k(\Omega)$. For $s_1,s_2\in\N\cup\{0\}$ such that $s_1+s_2\leq k$, we have that:
\begin{align*}
||\partial_x^{s_1}\partial_y^{s_2}f||_{L^2(\Omega)}^2&=\sum_{p\in\Z}\sum_{q\in\N}|p|^{2 s_1} |q\tfrac{\pi}{2}|^{2 s_2}\left|\mathcal{F}_{\omega}[f](p,q)\right|^2\\
||\partial_x^{s_1}\partial_y^{s_2}g||_{L^2(\Omega)}^2&=\sum_{p\in\Z}\sum_{q\in\N\cup\{0\}}|p|^{2 s_1} |q\tfrac{\pi}{2}|^{2 s_2}\left|\mathcal{F}_{\varpi}[f](p,q)\right|^2
\end{align*}
where $\mathcal{F}_{\omega}[f](p,q)$ and $\mathcal{F}_{\varpi}[f](p,q)$ are given by (\ref{def_eigenfunction_expansion}) and (\ref{def_eigenfunction_expansion_base_Y}) respectively.
\end{lemma}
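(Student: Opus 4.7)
The proof reduces the two–dimensional statement to one–dimensional questions about the factor bases, using the product structure $\omega_{p,q}=a_p b_q$ and $\varpi_{p,q}=a_p c_q$. First, one verifies directly that $\omega_{p,q}\in X^k(\Omega)$ and $\varpi_{p,q}\in Y^k(\Omega)$: since $b_q(\pm 1)=0$ for every $q\in\N$ and $\partial_y^{2m}b_q=(-1)^m(q\pi/2)^{2m}b_q$, every even-order $y$-derivative of $\omega_{p,q}$ vanishes on $\partial\Omega$; the analogous computation using $\partial_y^{2m+1}c_q=\pm(q\pi/2)^{2m+1}b_q$ places $\varpi_{p,q}$ in $Y^k(\Omega)$. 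Orthonormality in $L^2(\Omega)$ then follows from the orthonormality of each factor via Fubini.

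\textbf{The Parseval identity.} The key algebraic relations are
\[
\partial_y b_q=\pm\tfrac{q\pi}{2}\,c_q,\qquad \partial_y c_q=\pm\tfrac{q\pi}{2}\,b_q \ (q\ge 1),\qquad \partial_y c_0=0,
\]
which iterate to $\partial_y^{s_2}b_q=\pm(q\pi/2)^{s_2}b_q$ for $s_2$ even and $\pm(q\pi/2)^{s_2}c_q$ for $s_2$ odd, and symmetrically for $c_q$. Given $f\in X^k(\Omega)$, I would Fourier–expand $f(x,y)=\sum_p\hat f_p(y)a_p(x)$ in $x$; applying $\partial_x^{s_1}$ contributes the factor $|p|^{2s_1}$ after taking $L^2$-norms, and the claim reduces to the one-dimensional identity
\[
\|\partial_y^{s_2}g\|_{L^2(-1,1)}^2=\sum_{q\in\N}(q\tfrac{\pi}{2})^{2s_2}\,|g_q|^2,\qquad g_q:=\int_{-1}^{1}g(y)\,b_q(y)\,dy,
\]
for any $g\in H^{s_2}(-1,1)$ satisfying the $X^k$-type boundary conditions. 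To prove this, one expands $\partial_y^{s_2}g$ in $\{b_q\}$ (if $s_2$ is even) or $\{c_q\}$ (if $s_2$ is odd) and integrates by parts $s_2$ times, transferring all derivatives onto the basis function. The boundary terms generated at each stage have the form $[\partial_y^i g\cdot\partial_y^{s_2-1-i}b_q]_{-1}^{1}$ and vanish in two complementary cases: when $s_2-1-i$ is even the second factor is a scalar multiple of $b_q$ and hence zero at $\pm 1$; when $s_2-1-i$ is odd, $i$ has the same parity as $s_2$, so $\partial_y^i g$ is an even-order derivative that vanishes at $\pm 1$ by the $X^k$-hypothesis. The remaining interior integral evaluates to $\pm(q\pi/2)^{s_2}g_q$, and Parseval applied to $\partial_y^{s_2}g$ in the appropriate basis delivers the identity.

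\textbf{Completeness and main obstacle.} Since $\{a_p b_q\}$ is already an orthonormal basis of $L^2(\Omega)$ (by orthonormality of the factors), every $f\in L^2(\Omega)$ admits the $L^2$-convergent expansion $f=\sum \mathcal{F}_\omega[f](p,q)\,\omega_{p,q}$. Combined with the derivative identity just proved, for $f\in X^k(\Omega)$ the partial sums converge to $f$ in $H^k(\Omega)$, which is precisely what it means for $\{\omega_{p,q}\}$ to be an orthonormal basis of $X^k(\Omega)$. The argument for $\{\varpi_{p,q}\}$ in $Y^k(\Omega)$ is identical after interchanging the roles of $b_q$ and $c_q$ and using the $Y^k$ boundary conditions (vanishing of odd-order $y$-derivatives) in the complementary integration-by-parts cases. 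The technically delicate step, and the conceptual reason for the specific definitions in \eqref{Space_X}--\eqref{Space_Y}, is the boundary bookkeeping: the vanishing sets imposed by $X^k$ and $Y^k$ are chosen exactly so that, for every admissible pair $(s_1,s_2)$ with $s_1+s_2\le k$, each boundary term arising in the iterated integration by parts is neutralized either by the intrinsic vanishing of the derivatives of $b_q$ or $c_q$, or by the structural vanishing of the appropriate parity of derivatives of $f$ at $\partial\Omega$.
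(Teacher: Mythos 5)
Your proof is correct. Note that the paper does not actually prove this lemma in the text: the stated ``proof'' is a pointer to Section 2 of \cite{Castro-Cordoba-Lear}, where the same spaces $X^k$, $Y^k$ and the same product bases are treated, and the argument there is of exactly the type you describe (separation of variables, iterated integration by parts against the eigenfunctions, Parseval), so your route is the intended one. Two points of precision are worth fixing in your write-up. First, your boundary-term case analysis writes every term as $[\partial_y^i g\cdot\partial_y^{s_2-1-i}b_q]_{-1}^{1}$ and resolves the parities as if the basis factor were always $b_q$; this only covers $s_2$ even. When $s_2$ is odd you pair against $c_q$, and the parities swap: $\partial_y^{s_2-1-i}c_q\propto b_q$ vanishes when $s_2-1-i$ is odd, while for $s_2-1-i$ even one has $i$ even with $i\le s_2-1$, and one must check that $i\le k^{\star}$ in each parity combination of $k$ and $s_2$ (it does: if $k$ is even then $s_2\le k-1$ forces $i\le k-2=k^{\star}$, and if $k$ is odd then $i\le k-1=k^{\star}$). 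The cases close, but the sentence as written does not literally say so. Second, for the $Y^k$ statement the constant mode must be normalized ($c_0\equiv 1/\sqrt{2}$, not $1$) for orthonormality --- a defect of the paper's definition rather than of your argument --- and for $s_2\ge 1$ even one should note that $\langle\partial_y^{s_2}g,c_0\rangle$ reduces to the boundary term $[\partial_y^{s_2-1}g]_{-1}^{1}$, which vanishes by the $Y^k$ condition, consistently with the weight $|q\tfrac{\pi}{2}|^{2s_2}$ annihilating the $q=0$ contribution on the right-hand side.
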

Introducing a threshold number $m\in\N$, we define the projections $\mathbb{P}_m$ and $\mathbb{Q}_m$ of $L^2(\Omega)$ onto the linear span of eigenfunctions generated by $\{\omega_{p,q}\}_{(p,q)\in \Z\times \N}$ and $\{\varpi_{p,q}\}_{(p,q)\in \Z\times \N\cup\{0\}}$  respectively, such that $\{|p|,\,q\} \leq m$. This is, we have that:
\begin{equation}\label{projector_def}
\mathbb{P}_m [f](x,y):=\sum_{\substack{|p| \leq m\\ p\in \Z}}\sum_{\substack{q \leq m\\ q\in\N}} \mathcal{F}_{\omega}[f](p,q)\, w_{p,q}(x,y) \qquad \text{and} \qquad  \mathbb{Q}_m [f](x,y):=\sum_{\substack{|p|\leq m\\p\in\Z}}\sum_{\substack{q\leq m\\q\in\N\cup\{0\}}} \mathcal{F}_{\varpi}[f](p,q)\, \varpi_{p,q}(x,y).
\end{equation}
These projectors have the following properties:
\begin{lemma}\label{projectors_properties}
Let $\mathbb{P}_m, \mathbb{Q}_m$ be the projectors defined in (\ref{projector_def}). For $f\in L^2(\Omega)$, we have that $\mathbb{P}_m[f]$ and  $\mathbb{Q}_m[f]$ are  $C^{\infty}(\Omega)$ functions such that:
\begin{itemize}
	\item 	For $f\in H^1(\Omega)$ we have that:
\begin{align*}\label{Pm}
&\partial_x\mathbb{P}_m[f]=\mathbb{P}_m[\partial_x f],  \quad  \partial_x\mathbb{Q}_m[f]=\mathbb{Q}_m[\partial_x f],\quad \partial_y\mathbb{P}_m[f]=\mathbb{Q}_m[\partial_y f] \quad\text{and}  \quad \partial_y\mathbb{Q}_m[f]=\mathbb{P}_m[\partial_y f].
\end{align*}	
\emph{In consequence, for $f\in H^2(\Omega)$ we have that:}
	$$\partial_y^2\mathbb{P}_m[f]=\mathbb{P}_m[\partial_y^2 f] \qquad \text{and} \qquad  \partial_y^2\mathbb{Q}_m[f]=\mathbb{Q}_m[\partial_y^2 f].$$
	\item The projectors are self-adjoint in $L^2(\Omega)$:
	$$(\mathbb{P}_m[f],g)=(f,\mathbb{P}_m[g]) \quad \text{and}\quad  (\mathbb{Q}_m[f],g)=(f,\mathbb{Q}_m[g]) \quad \qquad \forall f,g\in L^{2}(\Omega).$$
\item For $f\in X^k(\Omega)$ and $g\in Y^k(\Omega)$:
\begin{align*}
||\mathbb{P}_m[f]||_{H^k(\Omega)}\leq ||f||_{H^k(\Omega)}, \quad \mathbb{P}_m[f]\to f \quad \text{in $X^k(\Omega)$}\,\\
||\mathbb{Q}_m [g]||_{H^k(\Omega)}\leq ||g||_{H^k(\Omega)},\quad \mathbb{Q}_m[f]\to f \quad \text{in $Y^k(\Omega)$}.
\end{align*}
\item  Leray projector $\mathbb{L}:=\mathbb{I}+\nabla(-\Delta)^{-1}\emph{div}$ commute with the pair $\left(\mathbb{Q}_m,\mathbb{P}_m\right)$ and with derivatives.
\end{itemize}
\end{lemma}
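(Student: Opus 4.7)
The plan is to verify each bullet point using the explicit structure of the basis functions $\omega_{p,q}=a_p b_q$ and $\varpi_{p,q}=a_p c_q$. Since by definition $\mathbb{P}_m[f]$ and $\mathbb{Q}_m[f]$ are \emph{finite} linear combinations (over $|p|,q\leq m$) of the $C^\infty$-functions $\omega_{p,q}$ and $\varpi_{p,q}$, they automatically belong to $C^\infty(\Omega)$, which also legitimises term-by-term differentiation. The $\pa_x$ identities are then immediate: $\pa_x\omega_{p,q}=ip\,\omega_{p,q}$ (and analogously for $\varpi_{p,q}$), while one integration by parts in the periodic variable gives $\mathcal{F}_{\omega}[\pa_x f](p,q)=ip\,\mathcal{F}_{\omega}[f](p,q)$ with no boundary contribution, so both sides have identical Fourier coefficients up to the truncation level.

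The mixed $\pa_y$ identities are the heart of the argument. The explicit formulas for $b_q,c_q$ yield the elementary relations $\pa_y b_q=\alpha_q c_q$ for $q\in\N$ and $\pa_y c_q=-\alpha_q b_q$ for $q\geq 1$, with $\pa_y c_0=0$, where $\alpha_q=\pm(q\pi/2)$. Hence $\pa_y\omega_{p,q}=\alpha_q\varpi_{p,q}$, and differentiating under the finite sum produces
$$\pa_y\mathbb{P}_m[f]=\sum_{\substack{|p|\leq m\\ 1\leq q\leq m}}\alpha_q\,\mathcal{F}_{\omega}[f](p,q)\,\varpi_{p,q}.$$
On the other hand, one integration by parts in $y$ gives $\mathcal{F}_{\varpi}[\pa_y f](p,q)=\alpha_q\,\mathcal{F}_{\omega}[f](p,q)$ for $q\geq 1$ and $\mathcal{F}_{\varpi}[\pa_y f](p,0)=0$; the boundary pieces vanish thanks to the identity $\pa_y c_q=-\alpha_q b_q$ with $b_q(\pm1)=0$, combined with the structural Dirichlet/Neumann conditions encoded in $X^k(\Omega)$, $Y^k(\Omega)$. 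Iterating yields $\pa_y^2\mathbb{P}_m[f]=\pa_y\mathbb{Q}_m[\pa_y f]=\mathbb{P}_m[\pa_y^2 f]$.

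Self-adjointness is standard: writing $\mathbb{P}_m[f]=\sum_{|p|,q\leq m}(f,\omega_{p,q})\,\omega_{p,q}$ and using orthonormality gives $(\mathbb{P}_m[f],g)=\sum(f,\omega_{p,q})\overline{(g,\omega_{p,q})}=(f,\mathbb{P}_m[g])$, and likewise for $\mathbb{Q}_m$. For the $H^k$-bound $||\mathbb{P}_m[f]||_{H^k(\Omega)}\leq ||f||_{H^k(\Omega)}$ one invokes Lemma \ref{properties_basis}: each $\omega_{p,q}$ lies in $X^k(\Omega)$ for every $k$ (since even $y$-derivatives of $b_q$ are scalar multiples of $b_q$ itself and thus vanish on $\pa\Omega$), so the Parseval identity applies to $\mathbb{P}_m[f]\in X^k(\Omega)$ and expresses its $H^k$-norm as a \emph{truncation} of the analogous series for $f$, whence the inequality. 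Convergence $\mathbb{P}_m[f]\to f$ in $X^k(\Omega)$ then reduces to the tail of the $H^k$-Parseval series tending to zero, which holds whenever $f\in X^k(\Omega)$; the $\mathbb{Q}_m$ case is identical modulo the $q=0$ mode.

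Finally, the Leray projector $\mathbb{L}=\mathbb{I}+\nabla(-\Delta)^{-1}\mathrm{div}$ commutes with the pair $(\mathbb{Q}_m,\mathbb{P}_m)$ because each constituent operator does: $\nabla$ and $\mathrm{div}$ interchange $\mathbb{P}_m$ and $\mathbb{Q}_m$ via the $\pa_y$ identity just established, while $-\Delta$ is diagonal in both bases with eigenvalues $|p|^2+(q\pi/2)^2$, so $(-\Delta)^{-1}$ (with boundary conditions matched to the basis) is diagonal and commutes manifestly with the spectral truncations and with derivatives. The principal technical difficulty of the entire lemma is the careful bookkeeping of boundary terms in the $\pa_y$ commutation: it is exactly the Dirichlet/Neumann conditions structurally encoded in $X^k(\Omega)$ and $Y^k(\Omega)$ that force those boundary integrals to vanish and let the two bases interact cleanly.
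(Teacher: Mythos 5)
Your overall strategy---direct computation in the explicit eigenbases, integration by parts to transfer derivatives onto the Fourier coefficients, and Parseval for the $H^k$ bounds and convergence---is the natural one. Note that the paper itself gives no proof of this lemma (it defers entirely to Section 2 of \cite{Castro-Cordoba-Lear}), so there is no in-paper argument to compare against; your bullets on smoothness, the $\partial_x$ identities, self-adjointness, and the $H^k$ truncation estimate are all handled correctly.

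There is, however, a concrete flaw at exactly the point you call the heart of the argument: the two mixed $\partial_y$ identities are not symmetric, and you justify both with the mechanism that only proves one of them. For $\partial_y\mathbb{Q}_m[f]=\mathbb{P}_m[\partial_y f]$ the integration by parts produces the boundary term $\big[f\,\overline{a_p}\,b_q\big]_{y=-1}^{y=1}$, which vanishes for \emph{every} $f\in H^1(\Omega)$ because $b_q(\pm 1)=0$. For $\partial_y\mathbb{P}_m[f]=\mathbb{Q}_m[\partial_y f]$ the boundary term is instead $\big[f\,\overline{a_p}\,c_q\big]_{y=-1}^{y=1}$, and $|c_q(\pm 1)|=1$, so it does \emph{not} vanish unless $f|_{\partial\Omega}=0$. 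Your stated reason (``$\partial_y c_q=-\alpha_q b_q$ with $b_q(\pm 1)=0$'') is the reason for the first identity, not the second, and the appeal to conditions ``encoded in $X^k(\Omega)$, $Y^k(\Omega)$'' is incompatible with the hypothesis $f\in H^1(\Omega)$ under which you assert the identity. Indeed the second identity is false for general $f\in H^1(\Omega)$: take $f\equiv 1$, so $\mathbb{Q}_m[\partial_y f]=0$, while $\mathbb{P}_m[1]$ is a nontrivial finite sum of the cosines $b_q$ with $q$ odd, whose $y$-derivative is nonzero. You need to impose $f|_{\partial\Omega}=0$ (i.e.\ $f\in X^1(\Omega)$, which is how the identity is actually used throughout the paper), and the same caveat propagates to your derivation of $\partial_y^2\mathbb{P}_m[f]=\mathbb{P}_m[\partial_y^2 f]$ and to the commutation of the Leray projector with $(\mathbb{Q}_m,\mathbb{P}_m)$, both of which rest on the problematic identity.
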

\begin{proof}[Proof of Lemmas (\ref{properties_basis}) and (\ref{projectors_properties})] See section \Blue{2} of \cite{Castro-Cordoba-Lear}.
\end{proof}

\section{Local solvability of solutions}\label{Sec_4}
%
%

To obtain a local existence result for a general smooth initial data in a general bounded domain for an \textit{active scalar} is far from being trivial. The presence of boundaries makes the well-posedness issues become more delicate. (See for example \cite{Constantin-Nguyen} and \cite{Constantin-Nguyen_2}, in the case of SQG). As in \cite{Castro-Cordoba-Lear}, we focus only  on \emph{our setting} and in \emph{our specific class} of initial data.\\

Then, we prove local existence and uniqueness of solutions using the Galerkin approximations. We return to the equations for the perturbation of the damping Boussinesq in $\Omega$:
\begin{equation}\label{perturbado_local_existence}
\left\{
\begin{array}{rl}
\partial_t\rho +\mathbf{u}\cdot\nabla \rho &= - u_2  \\
\partial_t\mathbf{u}+\mathbf{u}+\left(\mathbf{u}\cdot\nabla\right)\mathbf{u}&=-\nabla P -(0,\rho) \\
\nabla\cdot\mathbf{u}&=0\\
\mathbf{u}|_{t=0}&=\mathbf{u}(0)\in\mathbb{X}^k(\Omega)\\
\rho|_{t=0}&=\rho(0)\in X^k(\Omega).
\end{array}
\right. 
\end{equation}
besides the no-slip conditions $\mathbf{u}\cdot \mathbf{n}=0$ on $\partial\Omega$. Hence, we will prove the following result:
\begin{thm}\label{local_existence}
Let $k\in\N$ and an initial data $\rho(0)\in X^k$ and velocity $\mathbf{u}(0)\in\mathbb{X}^k$. Then, there exists a time $T>0$ and a constant $C$, both depending only on $e_{3}(0)$ and a unique solution $\rho\in C\left(0,T;X^{k}(\Omega) \right)$ and $\mathbf{u}\in C\left(0,T;\mathbb{X}^{k}(\Omega) \right)$ of the system (\ref{perturbado_local_existence}) such that:
$$\sup_{0\leq t\leq T}e_{k}(t)\leq C\,e_{k}(0)$$
where 
$$e_{k}(t):=||\mathbf{u}||_{H^{k}(\Omega)}^2(t)+||\rho||_{H^{k}(\Omega)}^2(t).$$
Moreover, for all $t\in[0,T)$ the following estimate holds:
\begin{equation}\label{estimate_BKM}
e_{k}(t)\leq e_{k}(0)\,\exp\left[\widetilde{C}\int_{0}^{t}\left( ||\nabla\rho||_{L^{\infty}(\Omega)}(s)+||\nabla\mathbf{u}||_{L^{\infty}(\Omega)}(s)\right)\,ds \right].
\end{equation}
\end{thm}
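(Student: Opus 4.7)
The plan is to carry out a Galerkin scheme adapted to the functional framework of Section \ref{Sec_3}. The approximate unknowns will be
\[
\rho^{m}\in\mathbb{P}_m X^k(\Omega),\qquad u_1^{m}\in\mathbb{Q}_m Y^k(\Omega),\qquad u_2^{m}\in\mathbb{P}_m X^k(\Omega),
\]
so that $(\rho^m,\mathbf{u}^m)$ already satisfies all the boundary conditions encoded in $X^k$ and $\mathbb{X}^k$ (this is where the basis constructed in Section \ref{Sec_Basis} is essential). The approximate system I would consider is obtained by projecting the nonlinear terms of (\ref{perturbado_local_existence}) with $\mathbb{P}_m$ (for the transport of $\rho$) and with the Leray projector $\mathbb{L}$ composed with the appropriate $\mathbb{P}_m,\mathbb{Q}_m$ (for the momentum equation). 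By Lemma \ref{projectors_properties}, $\mathbb{L}$, $\mathbb{P}_m$, $\mathbb{Q}_m$ and derivatives commute as needed, so the projected system is a finite-dimensional ODE in the coefficients with locally Lipschitz nonlinearity; the Picard theorem provides $(\rho^m,\mathbf{u}^m)\in C^1([0,T_m];\ldots)$.

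Next I would derive $H^k$ energy estimates uniform in $m$. Using self-adjointness of the projectors and the commutations of Lemma \ref{projectors_properties}, applying $\partial^\alpha$ with $|\alpha|\le k$ and pairing with $\partial^\alpha\rho^m$ or $\partial^\alpha\mathbf{u}^m$ yields the same identities as for the continuous problem, because the projectors can be moved past derivatives and their adjoints absorbed into the test function. The damping term $|\mathbf{u}^m|^2$ is of definite sign; the coupling with $\rho^m$ through $(0,\bar\rho^m)$ and $-u_2^m$ is skew-symmetric at leading order; the remaining contributions are handled by standard Kato--Ponce commutator estimates, producing a differential inequality of the form
\[
\frac{d}{dt}\,e_k^m(t)\,\lesssim\,e_k^m(t)\bigl(1+\sqrt{e_k^m(t)}\bigr),
\]
which gives a time $T=T(e_k(0))>0$ and a bound $\sup_{[0,T]}e_k^m(t)\le C\,e_k(0)$ independent of $m$.

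Having uniform $H^k$ bounds, I would read the equations to deduce uniform bounds for $(\partial_t\rho^m,\partial_t\mathbf{u}^m)$ in $L^\infty(0,T;H^{k-1})$, and then apply Aubin--Lions to extract a subsequence converging strongly in $C([0,T];H^{k-1})$ and weakly-$*$ in $L^\infty(0,T;H^k)$ to a limit $(\rho,\mathbf{u})$. Strong convergence of one derivative is enough to pass to the limit in the nonlinear terms, while weak-$*$ convergence preserves the $H^k$ bound. Since $X^k,\mathbb{X}^k$ are closed subspaces, the limit belongs to the correct spaces. Uniqueness follows from an $L^2$ estimate on the difference of two solutions, using that $H^k\hookrightarrow W^{1,\infty}$ for $k$ large enough to control the transport terms. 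Finally, the blow-up criterion (\ref{estimate_BKM}) is obtained by revisiting the $H^k$ estimate and replacing the rough bound $\|\nabla\rho\|_{L^\infty}+\|\nabla\mathbf{u}\|_{L^\infty}\lesssim\sqrt{e_k}$ by the actual $L^\infty$ norms inside the Kato--Ponce commutator, yielding a Gronwall inequality whose integrating factor is exactly the exponential in (\ref{estimate_BKM}).

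The main obstacle will be verifying that the projected nonlinear terms, in particular $\mathbb{P}_m[\mathbf{u}^m\cdot\nabla\rho^m]$ and the projected convection in the momentum equation, remain compatible with the boundary structure and with integration by parts in a way that does not create spurious boundary contributions at higher derivatives. The design of $X^k$, $Y^k$ precisely ensures that the iteratively deduced boundary conditions $\partial_y^n\rho|_{\partial\Omega}=0$ for even $n$ and $\partial_y^n u_1|_{\partial\Omega}=0$ for odd $n$ are respected by $\mathbb{P}_m,\mathbb{Q}_m$ and by the parity exchange under $\partial_y$; this is what makes the $H^k$ energy estimate closable without loss of regularity at $\partial\Omega$, and what allows the proof to follow the same pattern as in \cite{Castro-Cordoba-Lear}.
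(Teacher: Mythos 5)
Your proposal follows essentially the same route as the paper: Galerkin approximation built on the adapted bases $\{\omega_{p,q}\},\{\varpi_{p,q}\}$, Leray projection to remove the pressure, Picard for the finite-dimensional ODE, commutator estimates to close a uniform $H^k$ bound, Aubin--Lions to pass to the limit, closedness of $X^k,\mathbb{X}^k$ to preserve the boundary structure, and lower semicontinuity for \eqref{estimate_BKM}. Your identification of the ``main obstacle'' (compatibility of the projected nonlinearities with the parity/boundary structure under $\partial_y$) is exactly the point the paper checks by hand via the products $b_{p_1}b_{p_2}$, $c_{q_1}c_{q_2}$, $b_{p_1}c_{q_2}$.

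There is one concrete step where your argument, as written, does not deliver the statement. You close the energy estimate as $\frac{d}{dt}e_k^m\lesssim e_k^m\bigl(1+\sqrt{e_k^m}\bigr)$ and conclude a time of existence $T=T(e_k(0))$; but the theorem asserts that $T$ and $C$ depend only on $e_3(0)$, uniformly in $k$. The fix is the two-tier Gronwall argument the paper uses: the commutator estimate actually gives $\frac{d}{dt}e_k^{[m]}\lesssim e_k^{[m]}\bigl(\|\nabla\rho^{[m]}\|_{L^\infty}+\|\nabla\mathbf{u}^{[m]}\|_{L^\infty}\bigr)\lesssim e_k^{[m]}\,[e_3^{[m]}]^{1/2}$ by Sobolev embedding, so one first closes the Riccati inequality at the fixed low level $k=3$ to get $e_3^{[m]}(t)$ bounded on a time interval $T=T(e_3(0))$, and only then treats the $H^k$ inequality as \emph{linear} in $e_k^{[m]}$ with an integrable coefficient controlled by $e_3(0)$; this is also precisely what produces \eqref{estimate_BKM} with the $L^\infty$ norms in the exponent. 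Two smaller points: the upgrade from weak-$*$ $L^\infty(0,T;H^k)$ to the claimed strong continuity $C(0,T;X^k)$ needs an extra argument (the paper defers to Majda--Bertozzi), and the time-derivative bound is most naturally obtained in $H^{k-2}$ rather than $H^{k-1}$ once the Leray projector and the product estimates are invoked, though either suffices for Aubin--Lions.
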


\noindent
The general method of the proof is similar to that for proving existence of solutions to the Navier-Stokes and Euler equations which can be found in \cite{Majda-Bertozzi}.

The strategy of this section has two parts. First we find an approximate equation and approximate solutions that have two properties: (1) the existence theory for all time for the approximating solutions is easy, (2) the solutions satisfy an analogous energy estimate. The second part is the passage to a limit in the approximation scheme to obtain a solution to the original equations.\\

\noindent
We begin with some basic properties of the Sobolev spaces in bounded domains. In the rest, $D \subset \R^d$ is a bounded domain with smooth boundary $\partial D$.

\begin{lemma}For $s\in\N$, the following estimates holds:
\begin{itemize}
	\item  If $f,g\in H^s(D)\cap \mathcal{C}(D)$, then
\begin{equation}\label{product_rule}
||f\,g||_{H^s(D)}\lesssim \left(||f||_{H^s(D)}\,||g||_{L^{\infty}(D)}+||f||_{L^{\infty}(D)}\,||g||_{H^s(D)}\right)
\end{equation}	
	
	\item  If $f\in H^s(D)\cap \mathcal{C}^1(D)$ and $g\in H^{s-1}(D)\cap \mathcal{C}(D)$, then for $|\alpha|\leq s$ we have that:
\begin{equation}\label{Commutator}
||\partial^{\alpha} (f g)-f\partial^{\alpha} g||_{L^2(D)}\lesssim || f||_{W^{1,\infty}(D)}\, ||g||_{H^{s-1}(D)}+|| f||_{H^s(D)}\, ||g||_{L^{\infty}(D)}
\end{equation}
\end{itemize}
Moreover, the following Sobolev embedding holds:
\begin{itemize}
	\item $W^{s,p}(D)\subseteq L^{q}(D)$ continuously if $s<n/p$ and $p\leq q \leq np/(n-sp)$.
	
	\item  $W^{s,p}(D)\subseteq \mathcal{C}^{k}(\overline{D})$ consinuously is $s>k+n/p.$ 
\end{itemize}
\end{lemma}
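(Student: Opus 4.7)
The statement is a collection of standard Sobolev inequalities in a bounded domain $D \subset \R^d$ with smooth boundary, so the plan is to reduce everything to the whole-space case by a continuous extension and then invoke classical estimates. Specifically, let $E : H^s(D) \to H^s(\R^d)$ denote a bounded Stein-type extension operator satisfying $\|E f\|_{H^s(\R^d)} \lesssim \|f\|_{H^s(D)}$ and $\|E f\|_{L^\infty(\R^d)} \lesssim \|f\|_{L^\infty(D)}$ for all admissible $s$. The Sobolev embeddings in the last bullets then follow immediately, since $W^{s,p}(\R^d) \hookrightarrow L^q(\R^d)$ and $W^{s,p}(\R^d) \hookrightarrow \mathcal{C}^k(\R^d)$ are classical (Gagliardo--Nirenberg--Sobolev and Morrey), and one restricts back to $D$.

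For the product rule (\ref{product_rule}), I would apply $\partial^\alpha$ with $|\alpha|\leq s$ to $fg$ using Leibniz and write
\begin{equation*}
\partial^\alpha(fg)=\sum_{\beta\leq \alpha}\binom{\alpha}{\beta}\partial^\beta f\,\partial^{\alpha-\beta}g.
\end{equation*}
Taking the $L^2(D)$ norm, each mixed term $\|\partial^\beta f\,\partial^{\alpha-\beta}g\|_{L^2(D)}$ is controlled by Hölder with exponents $(2s/|\beta|, 2s/(|\alpha|-|\beta|))$ and the Gagliardo--Nirenberg interpolation
\begin{equation*}
\|\partial^\beta f\|_{L^{2s/|\beta|}(D)}\lesssim \|f\|_{L^\infty(D)}^{1-|\beta|/s}\,\|f\|_{H^s(D)}^{|\beta|/s},
\end{equation*}
and analogously for $g$; the exponents then combine by AM-GM or Young's inequality to yield the claimed right-hand side. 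The interpolation inequality itself is transferred from $\R^d$ via the extension operator.

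The commutator estimate (\ref{Commutator}) follows by the same Leibniz expansion, but now the term with $\beta=0$ cancels against $f\partial^\alpha g$, so only $|\beta|\geq 1$ terms remain. These split into two regimes: when the derivative on $f$ is of low order ($|\beta|=1,\ldots$), one pairs $\|\partial^\beta f\|_{L^\infty}\lesssim \|f\|_{W^{1,\infty}}$-type bounds with $H^{s-1}$ interpolation on $g$; when the derivative on $f$ is of high order (in particular $|\beta|=|\alpha|=s$), one pairs $\|f\|_{H^s}$ with $\|g\|_{L^\infty}$. The intermediate terms are again absorbed by Gagliardo--Nirenberg interpolation between the two endpoints, giving exactly the bilinear structure on the right-hand side.

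I do not foresee a genuine obstacle: these are textbook Moser/Kato--Ponce inequalities, and the only non-trivial input is the existence of a bounded Sobolev extension operator for a domain with smooth boundary, which is standard. The most delicate bookkeeping will be the combinatorial argument that the interpolation exponents add up correctly so that only $\|\cdot\|_{L^\infty}\|\cdot\|_{H^s}$-type pairings (and not true fractional products) appear in the final bounds; but this is handled by the classical Gagliardo--Nirenberg inequalities in $\R^d$ applied to the extensions $Ef$ and $Eg$.
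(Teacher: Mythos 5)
Your proposal is correct, but it is worth noting that the paper does not actually prove this lemma: it simply cites \cite[p.~280]{Ferrari} and the references therein, since these are standard Moser-type product and commutator estimates together with the classical Sobolev embeddings for a bounded domain with smooth boundary. What you supply is the standard self-contained argument that such references contain: reduce to $\R^d$ via a Stein extension operator, then run Leibniz, H\"older, and Gagliardo--Nirenberg interpolation, with the $\beta=0$ cancellation handling the commutator. Two small points deserve explicit mention if you were to write this out in full. First, you need the extension operator to be bounded simultaneously on $H^s(D)$ and on $L^\infty(D)$ (and on $W^{1,\infty}(D)$ for the commutator bound); Stein's operator has this property, but it should be stated, since a generic $H^s$-extension need not control $L^\infty$. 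Second, the Gagliardo--Nirenberg inequalities on a bounded domain carry additional lower-order terms (as the paper itself uses later in the form $\|D^j f\|_{L^2}\lesssim\|D^{2j}f\|_{L^2}^{1/2}\|f\|_{L^2}^{1/2}+\|f\|_{L^2}$); these are harmless here because the right-hand sides of both estimates already contain the full $H^s$ and $L^\infty$ norms, but the bookkeeping should acknowledge them. With those caveats your argument is complete and is, in substance, the proof the cited reference provides.
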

\begin{proof}
See \cite[p.~280]{Ferrari} and references therein.
\end{proof}

\begin{proof}[Proof of Theorem \ref{local_existence}]
We firstly construct approximate equations by using a smoothing procedure called Galerkin method.
The $m^{\text{th}}$-Galerkin approximation of (\ref{perturbado_local_existence}) is the following system:
\begin{equation}\label{rho_sharp}
\left\{
\begin{array}{rl}
\partial_t\rho^{[m]} +\mathbb{P}_m\left[\left(\mathbf{u}^{[m]}\cdot\nabla\right) \rho^{[m]}\right] &= - u_2^{[m]} \\
\partial_t\mathbf{u}^{[m]}+\mathbf{u}^{[m]}+(\mathbb{Q}_m,\mathbb{P}_m)\left[\left(\mathbf{u}^{[m]}\cdot\nabla\right) \mathbf{u}^{[m]}\right]&=-\nabla P^{[m]} +(0,\rho^{[m]}) \\
\nabla\cdot\mathbf{u}^{[m]}&=0\\
\mathbf{u}^{[m]}|_{t=0}&=\left(\mathbb{Q}_m[u_1],\mathbb{P}_m[u_2]\right)(0)\\
\rho^{[m]}|_{t=0}&=\mathbb{P}_m[\rho](0).
\end{array}
\right. 
\end{equation}
with $\rho(0)\in X^k$ and $\mathbf{u}(0)\in\mathbb{X}^k$.

Equations (\ref{rho_sharp}) explicitly contain the pressure term $P^{[m]}$. Following Leray, we eliminate $P^{[m]}$ and the incompressibility condition $\nabla\cdot\mathbf{u}^{[m]}=0$ by projecting these equations onto
the space of divergence-free functions:
$$\mathbb{V}^k(\Omega):=\left\lbrace \mathbf{v}\in \mathbb{X}^k(\Omega): \nabla\cdot\mathbf{v}=0\right\rbrace.$$
Because the Leray operator $\mathbb{L}$ commutes with the pair $\left(\mathbb{Q}_m,\mathbb{P}_m\right)$ and $\mathbb{L}\left[\mathbf{u}^{[m]}\right]=\mathbf{u}^{[m]}$, we have:
\begin{equation}\label{Leray_velocity}
\partial_t\mathbf{u}^{[m]}+\mathbf{u}^{[m]}+\mathbb{L}\left(\mathbb{Q}_m,\mathbb{P}_m\right)\left[\left(\mathbf{u}^{[m]}\cdot\nabla\right) \mathbf{u}^{[m]}\right]= \mathbb{L}\left[(0,\rho^{[m]})\right]
\end{equation}
or equivalently
\begin{equation*}
\left\{
\begin{array}{rl}
\partial_t u_1^{[m]}+u_1^{[m]}+\mathbb{Q}_m\mathbb{L}_1\left[ \left(\mathbf{u}^{[m]}\cdot\nabla\right) u_1^{[m]}\right]&=\mathbb{Q}_m\left[(-\Delta)^{-1}\partial_x\partial_y \rho^{[m]}\right]\\
\partial_t u_2^{[m]}+u_2^{[m]}+\mathbb{P}_m\mathbb{L}_2\left[ \left(\mathbf{u}^{[m]}\cdot\nabla\right) u_2^{[m]}\right]&=\mathbb{P}_m\left[(-\Delta)^{-1}\partial_y^2\rho^{[m]} +\rho^{[m]}\right].
\end{array}
\right.
\end{equation*}
Since the initial data $\rho^{[m]}|_{t=0}=\mathbb{P}_m[\rho](0)$ in (\ref{rho_sharp}) belongs to $\mathbb{P}_m L^2(\Omega)$ together with the initial velocity $\mathbf{u}^{[m]}|_{t=0}=\left(\mathbb{Q}_m[u_1],\mathbb{P}_m[u_2]\right)(0)$ belongs to $\mathbb{Q}_m L^2(\Omega)\times\mathbb{P}_m L^2(\Omega)$ and because the structure of the equations, we look for solutions of the form:
$$\rho^{[m]}(t)=\sum_{\substack{|p| \leq m\\ p\in \Z}}\sum_{\substack{q \leq m\\ q\in\N}} \mathfrak{a}^{[m]}_{p,q}(t) \omega_{p,q}(x,y)$$
and
$$\mathbf{u}^{[m]}(t)=\left(\sum_{\substack{|p| \leq m\\ p\in \Z}}\sum_{\substack{q \leq m\\ q\in\N\cup\{0\}}} \mathfrak{b}^{[m]}_{p,q}(t) \varpi_{p,q}(x,y),\sum_{\substack{|p| \leq m\\ p\in \Z}}\sum_{\substack{q \leq m\\ q\in\N}} \mathfrak{c}^{[m]}_{p,q}(t) \omega_{p,q}(x,y)\right).$$
In this way, \eqref{rho_sharp} is reduced to a finite dimensional ODE system for the coefficients $\mathfrak{a}^{[m]}_{p,q}(t)$, $ \mathfrak{b}^{[m]}_{p,q}(t)$ and $ \mathfrak{c}^{[m]}_{p,q}(t)$ for $\{|p|, q\}\leq m$, and we can apply Picard's theorem to find a solution on a time of existence depending on $m$. Next, we will use energy estimates to show  a time of existence $T$,  uniform in $m$, for every solution $\left(\rho^{[m]}(t),\mathbf{u}^{[m]}(t)\right)$ of \eqref{rho_sharp} and a limit $\left(\rho(t),\mathbf{u}(t)\right)$ which will solve $\eqref{perturbado_local_existence}$. \\

Taking derivatives $\partial^{s}$, with $|s|\leq k$ on (\ref{Leray_velocity}) and then taking the  $L^2(\Omega)$ inner product with $\partial^s \mathbf{u}^{[m]}$, we obtain using the properties of the Leray projector that:
\begin{equation}\label{local_estimate_1}
\tfrac{1}{2}\partial_t||\partial^s\mathbf{u}^{[m]}||_{L^2(\Omega)}^2=\left(\partial^s\rho^{[m]},\partial^s u_2^{[m]} \right)-||\partial^s\mathbf{u}^{[m]}||_{L^2(\Omega)}^2-\left\langle \partial^s(\mathbb{Q}_m,\mathbb{P}_m)\left[\left(\mathbf{u}^{[m]}\cdot\nabla\right) \mathbf{u}^{[m]}\right],\partial^s\mathbf{u}^{[m]}\right\rangle.
\end{equation}
Moreover, as $\partial_t\rho^{[m]} +\mathbb{P}_m\left[\left(\mathbf{u}^{[m]}\cdot\nabla\right) \rho^{[m]}\right] = - u_2^{[m]}$ , we obtain that:
\begin{align}\label{local_estimate_2}
\left(\partial^s\rho^{[m]},\partial^s u_2^{[m]} \right)&=-\tfrac{1}{2}\partial_t||\partial^s\rho^{[m]}||_{L^2(\Omega)}^2-\left(\partial^s \rho^{[m]},\partial^s\mathbb{P}_m\left[\left(\mathbf{u}^{[m]}\cdot\nabla\right) \rho^{[m]}\right] \right).
\end{align}
By putting together (\ref{local_estimate_1}) and (\ref{local_estimate_2}), we achieve that:
\begin{align*}
\tfrac{1}{2}\partial_t\left(||\partial^s\mathbf{u}^{[m]}||_{L^2(\Omega)}^2+||\partial^s\rho^{[m]}||_{L^2(\Omega)}^2\right)=&-||\partial^s \mathbf{u}^{[m]}||_{L^2(\Omega)}^2\\
&-\left(\partial^s \rho^{[m]},\partial^s\mathbb{P}_m\left[\left(\mathbf{u}^{[m]}\cdot\nabla\right) \rho^{[m]}\right] \right)\\
&-\left(\partial^s u_1^{[m]},\partial^s\mathbb{Q}_m\left[\left(\mathbf{u}^{[m]}\cdot\nabla\right) u_1^{[m]}\right] \right)\\
&-\left(\partial^s u_2^{[m]},\partial^s\mathbb{P}_m\left[\left(\mathbf{u}^{[m]}\cdot\nabla\right) u_2^{[m]}\right] \right)\\
&=-||\partial^s \mathbf{u}^{[m]}||_{L^2(\Omega)}^2+I+II+III.
\end{align*}
Now, we need to distinguish between an even or odd number of $y$-derivatives. In any case, the properties of $\mathbb{P}_m, \mathbb{Q}_m$ given by Lemma (\ref{projectors_properties}) and the commutator estimate (\ref{Commutator}) with $f=\mathbf{u}^{[m]}$ and $g=\nabla\rho^{[m]}$ give us the first inequality:
\begin{align}\label{I}
I\lesssim ||\pa^s\rho^{[m]}||_{L^2(\Omega)}\left( ||\nabla \mathbf{u}^{[m]}||_{L^{\infty}(\Omega)}||\rho^{[m]}||_{H^k(\Omega)}+||\mathbf{u}^{[m]}||_{H^k(\Omega)}||\nabla \rho^{[m]}||_{L^{\infty}(\Omega)} \right).
\end{align}
For the rest, we proceed as before with $f=\mathbf{u}^{[m]}$ and $g=\nabla u_1^{[m]}$ or $g=\nabla u_2^{[m]}$ respectively to obtain the inequalities:
\begin{align*}
II\lesssim ||\pa^s u_1^{[m]}||_{L^2(\Omega)}\left( ||\nabla \mathbf{u}^{[m]}||_{L^{\infty}(\Omega)}||u_1^{[m]}||_{H^k(\Omega)}+||\mathbf{u}^{[m]}||_{H^k(\Omega)}||\nabla u_1^{[m]}||_{L^{\infty}(\Omega)} \right),\\
III\lesssim ||\pa^s u_2^{[m]}||_{L^2(\Omega)}\left( ||\nabla \mathbf{u}^{[m]}||_{L^{\infty}(\Omega)}||u_2^{[m]}||_{H^k(\Omega)}+||\mathbf{u}^{[m]}||_{H^k(\Omega)}||\nabla u_2^{[m]}||_{L^{\infty}(\Omega)} \right)
\end{align*}
and in consequence:
\begin{equation}\label{II+III}
II+III\lesssim  ||\pa^s \mathbf{u}^{[m]}||_{L^2(\Omega)}^2 ||\nabla \mathbf{u}^{[m]}||_{L^{\infty}(\Omega)}.
\end{equation}
\textbf{Remark:} In the previous computations, we have used that $\mathbf{u}^{[m]}$ is divergence-free and vanishes at the boundary $\partial\Omega$. Then, integration by parts gives that the singular terms disappear.\\

\noindent
Summing over $|s|\leq k$ and putting together (\ref{I}) and (\ref{II+III}) we obtain:
\begin{align}\label{e_k^sharp}
\dot{e}_{k}^{[m]}(t)&\lesssim e_{k}^{[m]}(t)\,\left(||\nabla \rho^{[m]}||_{L^{\infty}(\Omega)}(t)+||\nabla\mathbf{u}^{[m]}||_{L^{\infty}(\Omega)}(t) \right)\nonumber\\
&\lesssim e_{k}^{[m]}(t)\,\left(||\rho^{[m]}||_{H^3(\Omega)}(t)+||\mathbf{u}^{[m]}||_{H^3(\Omega)}(t) \right)
\end{align}
thanks to the Sobolev embedding, where 
$$e_{k}^{[m]}(t):=||\mathbf{u}^{[m]}||_{H^{k}(\Omega)}^2(t)+||\rho^{[m]}||_{H^{k}(\Omega)}^2(t).$$
Hence, assuming that $k\geq 3$ in (\ref{e_k^sharp}), for all $m$ and $0\leq t< T\leq \left(\tfrac{1}{2}\, [e_{3}^{[m]}(0)]^{1/2}\right)^{-1}$ we have that:
\begin{equation}\label{estimate_e3}
e_{3}^{[m]}(t)\leq \frac{[e_{3}^{[m]}(0)]^{1/2}}{1-\tfrac{t}{2}\,[e_{3}^{[m]}(0)]^{1/2}}\leq \frac{[e_{3}(0)]^{1/2}}{1-\tfrac{t}{2}\,[e_{3}(0)]^{1/2}}
\end{equation}
and, in particular 
$$\sup_{0\leq t< T} e_{3}^{[m]}(t)\leq \frac{[e_{3}(0)]^{1/2}}{1-\tfrac{T}{2}\,[e_{3}(0)]^{1/2}}.$$
Applying (\ref{estimate_e3}) in the last term of (\ref{e_k^sharp}), we obtain for all $m$ and $0\leq t < T$ by Gronwall's lemma that:
\begin{align}\label{estimate_final_Hk}
e_{k}^{[m]}(t)&\leq e_{k}^{[m]}(0)\,\text{exp}\left[\int_{0}^t \frac{[e_{3}(0)]^{1/2}}{1-\tfrac{s}{2}\,[e_{3}(0)]^{1/2}}\,ds \right]\nonumber\\
&\leq e_{k}(0)\,\text{exp}\left[\int_{0}^t \frac{[e_{3}(0)]^{1/2}}{1-\tfrac{s}{2}\,[e_{3}(0)]^{1/2}}\,ds \right].
\end{align}
and, in particular	
\begin{equation}\label{uniform_bound_e_k^{[m]}}
\sup_{0\leq t< T}e_{k}^{[m]}(t)\leq C\,e_{k}(0)
\end{equation}
where $C$ is a constant depending only on $e_{3}(0).$\\

\noindent
\textbf{Remark:} In the last inequality of (\ref{estimate_e3}) and (\ref{estimate_final_Hk}), we have used in a crucial way the bound
$e_{k}^{[m]}(0)\leq e_{k}(0)$
which, is a consequence of the fact that $\rho(0)\in X^k(\Omega)$ and $\mathbf{u}(0)\in\mathbb{X}^k(\Omega)$ together with the Lemma (\ref{projectors_properties}).\\

In view of (\ref{uniform_bound_e_k^{[m]}}), we have that the sequences $\rho^{[m]}$ and $\mathbf{u}^{[m]}$ are uniformly bounded, with respect to $m$, in $L^{\infty}\left(0,T;H^{k}(\Omega)\right)$ and $L^{\infty}\left(0,T;H^{k}(\Omega)\times H^k(\Omega)\right)$ respectively. As a consequence of the Banach-Alaoglu theorem  (see \cite{Royden}), each of these sequences has a subsequence that converges weakly to some limit in $H^k(\Omega)$ or in $H^{k}(\Omega)\times H^k(\Omega)$. This is $\rho^{[m]}(t)\rightharpoonup\rho(t)$ in $H^{k}(\Omega)$ and $\mathbf{u}^{[m]}(t)\rightharpoonup\mathbf{u}(t)$ in $H^{k}(\Omega)\times H^k(\Omega)$ for $0\leq t< T$.\\

Furthermore, something similar can be obtained for the sequences of time derivatives. On one hand, the family  $\partial_t \rho^{[m]}$ is uniformly bounded in $L^{\infty}\left(0,T;H^{k-2}(\Omega)\right)$. On the other hand, the family  $\partial_t \mathbf{u}^{[m]}$ is uniformly bounded in $L^{\infty}\left(0,T;H^{k-2}(\Omega)\times H^{k-2}(\Omega)\right)$.

By (\ref{rho_sharp}) and the properties of Leray projector, we have that:
\begin{align*}
\bullet \quad \sup_{0\leq t< T}||\partial_t \rho^{[m]}||_{H^{k-2}(\Omega)}(t)&=\sup_{0\leq t< T}||u_2^{[m]}+\mathbb{P}_m\left[\left(\mathbf{u}^{[m]}\cdot\nabla\right) \rho^{[m]}\right]||_{H^{k-2}(\Omega)}(t) \\
&\leq \sup_{0\leq t< T}\left\lbrace || u_2^{[m]}||_{H^{k-2}(\Omega)}+||\mathbb{P}_m\left[\left(\mathbf{u}^{[m]}\cdot\nabla\right) \rho^{[m]} \right]||_{H^{k-2}(\Omega)}\right\rbrace(t),\\
\bullet \quad \sup_{0\leq t< T}||\partial_t \mathbf{u}^{[m]}||_{H^{k-2}(\Omega)}(t)&=\sup_{0\leq t< T}||\mathbb{L}[(0,\rho^{[m]})]-\mathbb{L}(\mathbb{Q}_m,\mathbb{P}_m)\left[\left(\mathbf{u}^{[m]}\cdot\nabla\right) \mathbf{u}^{[m]}\right]-\mathbf{u}^{[m]}||_{H^{k-2}(\Omega)}(t) \\
&\leq \sup_{0\leq t< T}\left\lbrace|| \rho^{[m]}||_{H^{k-2}(\Omega)}+ || \mathbf{u}^{[m]}||_{H^{k-2}(\Omega)}\right\rbrace(t)\\
&+\sup_{0\leq t< T}\left\lbrace||\mathbb{P}_m\left[\left(\mathbf{u}^{[m]}\cdot\nabla\right) u_2^{[m]} \right]||_{H^{k-2}(\Omega)}+||\mathbb{Q}_m\left[\left(\mathbf{u}^{[m]}\cdot\nabla\right) u_1^{[m]} \right]||_{H^{k-2}(\Omega)}\right\rbrace(t).
\end{align*}
In the next step, we need to show that $\left(\mathbf{u}^{[m]}\cdot\nabla\right) \rho^{[m]}\in X^{k-1}(\Omega)$ and $\left(\mathbf{u}^{[m]}\cdot\nabla\right) \mathbf{u}^{[m]}\in Y^{k-1}(\Omega)\times X^{k-1}(\Omega)$ to apply Lemma (\ref{projectors_properties}), for $k\geq 3$, and to get:
\begin{align*}
\circ\, \quad ||\mathbb{P}_m\left[\left(\mathbf{u}^{[m]}\cdot\nabla\right) \rho^{[m]} \right]||_{H^{k-2}(\Omega)}(t)&\leq ||\left(\mathbf{u}^{[m]}\cdot\nabla \right)\rho^{[m]} ||_{H^{k-2}(\Omega)}(t)\\
&\lesssim  \left[||\mathbf{u}^{[m]}||_{H^{k-2}(\Omega)}\,||\nabla \rho^{[m]} ||_{L^{\infty}(\Omega)}+||\mathbf{u}^{[m]}||_{L^{\infty}(\Omega)}\,||\nabla \rho^{[m]}||_{H^{k-2}(\Omega)}\right](t)\\
&\lesssim ||\mathbf{u}^{[m]}||_{H^{k}(\Omega)}(t)\, ||\rho^{[m]} ||_{H^{k}(\Omega)}(t),\\
\circ \quad ||\mathbb{Q}_m\left[\left(\mathbf{u}^{[m]}\cdot\nabla\right) u_1^{[m]} \right]||_{H^{k-2}(\Omega)}(t)&\leq ||\left(\mathbf{u}^{[m]}\cdot\nabla \right) u_1^{[m]} ||_{H^{k-2}(\Omega)}(t)\lesssim ||\mathbf{u}^{[m]}||_{H^{k}(\Omega)}^2(t),\\
\circ\, \quad ||\mathbb{P}_m\left[\left(\mathbf{u}^{[m]}\cdot\nabla\right) u_2^{[m]} \right]||_{H^{k-2}(\Omega)}(t)&\leq ||\left(\mathbf{u}^{[m]}\cdot\nabla \right) u_2^{[m]} ||_{H^{k-2}(\Omega)}(t)\lesssim ||\mathbf{u}^{[m]}||_{H^{k}(\Omega)}^2(t)
\end{align*}
where we have used (\ref{product_rule}) and the Sobolev embedding $L^{\infty}(\Omega)\hookrightarrow H^{2}(\Omega)$.

  Checking that $\left(\mathbf{u}^{[m]}\cdot\nabla\right) \rho^{[m]}\in X^{k-1}(\Omega)$ and $\left(\mathbf{u}^{[m]}\cdot\nabla\right) \mathbf{u}^{[m]}\in Y^{k-1}(\Omega) \times X^{k-1}(\Omega)$  reduces to see that:
$$\partial_y^{n}\left[\left(\mathbf{u}^{[m]}\cdot\nabla\right) \rho^{[m]}\right]|_{\partial\Omega}=\partial_y^{n}\left[\left(\mathbf{u}^{[m]}\cdot\nabla\right) u_2^{[m]}\right]|_{\partial\Omega}=\partial_y^{n+1}\left[\left(\mathbf{u}^{[m]}\cdot\nabla\right) u_1^{[m]}\right]|_{\partial\Omega}=0 $$ 
for any even natural number $n$. We start, with the following observations:
\begin{align*}
\diamond\quad  \left(\mathbf{u}^{[m]}\cdot\nabla\right) \rho^{[m]}&=\mathbb{Q}_m \left[u_1^{[m]}\right]\mathbb{P}_m\left[\pa_x\rho^{[m]}\right]+\mathbb{P}_m\left[u_2^{[m]}\right] \mathbb{Q}_m\left[\pa_y \rho^{[m]}\right],\\
\diamond\quad  \left(\mathbf{u}^{[m]}\cdot\nabla\right) u_1^{[m]}&=\mathbb{Q}_m \left[u_1^{[m]}\right]\mathbb{Q}_m\left[\pa_x u_1^{[m]}\right]+\mathbb{P}_m\left[u_2^{[m]}\right] \mathbb{P}_m\left[\pa_y u_1^{[m]}\right],\\
\diamond\quad  \left(\mathbf{u}^{[m]}\cdot\nabla\right) u_2^{[m]}&=\mathbb{Q}_m \left[u_1^{[m]}\right]\mathbb{P}_m\left[\pa_x u_2^{[m]}\right]+\mathbb{P}_m\left[u_2^{[m]}\right] \mathbb{Q}_m\left[\pa_y u_2^{[m]}\right]
\end{align*}
and the facts that:
\begin{align*}
\pa_y(b_{p_1} b_{p_2})(y)&=(\pa_y b_{p_1})(y)\,b_{p_2}(y)+b_{p_1}(y)\,(\pa_y b_{p_2})(y)=(-1)^{p_1} p_1 \tfrac{\pi}{2} c_{p_1}(y)\,b_{p_2}(y)+(-1)^{p_2} p_2\tfrac{\pi}{2}b_{p_1}(y)\,c_{p_2}(y),\\
\pa_y(c_{q_1} c_{q_2})(y)&=(\pa_y c_{q_1})(y)\,c_{q_2}(y)+c_{q_1}(y)\,(\pa_y c_{q_2})(y)=(-1)^{q_1+1}q_1\tfrac{\pi}{2}b_{q_1}(y)\,c_{q_2}(y)+(-1)^{q_2+1}q_2\tfrac{\pi}{2}c_{q_1}(y)\,b_{q_2}(y)
\end{align*}
and
\begin{align*}
\pa_y^2(b_{p_1} \,c_{q_2})(y)&=(\pa_y^2 b_{p_1})(y)\, c_{q_2}(y)+2 (\pa_y b_{p_1})(y)\, (\pa_y c_{q_2})(y)+b_{p_1}(y)\, (\pa_y^2 c_{q_2})(y)\\
&=(-1)\left[\left(p_1\tfrac{\pi}{2}\right)^2+\left(q_2\tfrac{\pi}{2}\right)^2 \right] b_{p_1}(y)\,c_{q_2}(y)+(-1)(-1)^{p_1+q_2} 2p_1 q_2 \left(\tfrac{\pi}{2}\right)^2 c_{p_1}(y)\,b_{q_2}(y).
\end{align*}
Iterating this procedure and using that $b_p(\pm 1)=0$ we prove the boundary conditions for the derivatives of even and odd order of the non-linear terms.

Therefore, putting all together we obtain:
\begin{align*}
\bullet \quad \sup_{0\leq t< T}||\partial_t \rho^{[m]}||_{H^{k-2}(\Omega)}(t)&\lesssim \sup_{0\leq t< T}||\mathbf{u}^{[m]}||_{H^{k}(\Omega)}(t)\,\left(1+||\rho^{[m]} ||_{H^{k}(\Omega)}(t)\right)\lesssim 1+C\, e_{k}(0), \\
\bullet \quad \sup_{0\leq t< T}||\partial_t \mathbf{u}^{[m]}||_{H^{k-2}(\Omega)}(t)&\lesssim \sup_{0\leq t< T} ||\rho^{[m]}||_{H^{k}(\Omega)}(t)+ ||\mathbf{u}^{[m]}||_{H^{k}(\Omega)}(t)\,\left(1+||\mathbf{u}^{[m]} ||_{H^{k}(\Omega)}(t)\right)\lesssim 1+C\, e_{k}(0)
\end{align*}
thanks to (\ref{uniform_bound_e_k^{[m]}}). Hence, the family of time derivatives $\partial_t \rho^{[m]}(t)$ is uniformly bounded in $L^{\infty}\left(0,T;H^{k-2}(\Omega)\right)$ and the same for the famility $\pa_t\mathbf{u}^{[m]}$ in $L^{\infty}\left(0,T;H^{k-2}(\Omega)\times H^{k-2}(\Omega)\right)$. Then, by Banach-Alaoglu theorem, $\pa_t\rho^{[m]}(t)$ has a subsequence that converges weakly to some limit in $H^{k-2}(\Omega)$ for $0\leq t<T$ and analogously $\pa_t \mathbf{u}^{[m]}(t)$  has a subsequence that converges weakly to some limit in $H^{k-2}(\Omega)\times H^{k-2}(\Omega)$ for $0\leq t<T$.

Moreover, by virtue of Aubin-Lions's compactness lemma  (see for instance \cite{Lions}) applied with the triples $H^{k}(\Omega)\subset\subset H^{k-1}(\Omega)\subset H^{k-2}(\Omega)$  and $H^{k}(\Omega)\times H^{k}(\Omega)\subset\subset H^{k-1}(\Omega)\times H^{k-1}(\Omega)\subset H^{k-2}(\Omega)\times H^{k-2}(\Omega)$ we obtain that the convergences of $\rho^{[m]} \rightarrow \rho$ and $\mathbf{u}^{[m]} \rightarrow \mathbf{u}$ are in fact strong in $C(0,T;H^{k-1}(\Omega))$ and in $C(0,T;H^{k-1}(\Omega)\times H^{k-1}(\Omega))$ respectively.

Using these facts, we may pass to the limit in the non-linear part of (\ref{rho_sharp}) to see the convergences of $\mathbb{P}_m[(\mathbf{u}^{[m]}\cdot\nabla)\rho^{[m]}]\rightarrow \left(\mathbf{u}\cdot\nabla\right)\rho$ and $(\mathbb{Q}_m,\mathbb{P}_m)[(\mathbf{u}^{[m]}\cdot\nabla)\mathbf{u}^{[m]}]\rightarrow \left(\mathbf{u}\cdot\nabla\right)\mathbf{u}$ in $C(0,T; H^{k-2}(\Omega))$ and in $C(0,T;H^{k-2}(\Omega)\times H^{k-2}(\Omega))$ respectively, as follows:
\begin{align*}
||\mathbb{P}_m[(\mathbf{u}^{[m]}\cdot\nabla )\rho^{[m]}]- \left(\mathbf{u}\cdot\nabla\right)\rho||_{H^{k-2}(\Omega)}&=||\mathbb{P}_m[(\mathbf{u}^{[m]}\cdot\nabla)\rho^{[m]}] \pm (\mathbf{u}^{[m]}\cdot\nabla)\rho^{[m]}\pm (\mathbf{u}^{[m]}\cdot\nabla)\rho - (\mathbf{u}\cdot\nabla)\rho||_{H^{k-2}(\Omega)}\\
&\leq \big|\big|(\mathbb{P}_m-\mathbb{I})[(\mathbf{u}^{[m]}\cdot\nabla)\rho^{[m]}]\big|\big|_{H^{k-2}(\Omega)}+\big|\big| (\mathbf{u}^{[m]}\cdot\nabla)(\rho^{[m]}-\rho)\big|\big|_{H^{k-2}(\Omega)}\\
&+\big|\big|([\mathbf{u}^{[m]}-\mathbf{u}]\cdot\nabla)\rho\big|\big|_{H^{k-2}(\Omega)}\rightarrow 0 \qquad \text{as}\quad  m\to \infty.
\end{align*}
In the limit, we use the fact that $\lim_{m\to\infty} ||\mathbb{P}_m[f]-f||_{H^s(\Omega)}=0$ for $f\in X^s(\Omega)$, together with the convergences of $\mathbf{u}^{[m]}\rightarrow \mathbf{u}$ and $\rho^{[m]}\rightarrow \rho$ and \eqref{product_rule}, for  $k\geq 3$.\\
For the other one, we repeat the same procedure using  that $\lim_{m\to\infty} ||\mathbb{Q}_m[g]-g||_{H^s(\Omega)}=0$ for $g\in Y^s(\Omega)$ and the fact that:
\begin{align*}
||(\mathbb{Q}_m,\mathbb{P}_m)[(\mathbf{u}^{[m]}\cdot\nabla)\mathbf{u}^{[m]}]-\left(\mathbf{u}\cdot\nabla\right)\mathbf{u}||_{H^{k-2}(\Omega)\times H^{k-2}(\Omega)}&=||\mathbb{Q}_m[(\mathbf{u}^{[m]}\cdot\nabla)u_1^{[m]}]-\left(\mathbf{u}\cdot\nabla\right)u_1||_{H^{k-2}(\Omega)}\\
&\,+||\mathbb{P}_m[(\mathbf{u}^{[m]}\cdot\nabla)u_2^{[m]}]-\left(\mathbf{u}\cdot\nabla\right)u_2||_{H^{k-2}(\Omega)}.
\end{align*}
Now, from (\ref{rho_sharp}), we have that
$\partial_t\rho^{[m]} =  -c\,u_2^{[m]}-\mathbb{P}_m\left[\mathbf{u}^{[m]}\cdot\nabla \rho^{[m]} \right]\rightarrow -c\,u_2- \mathbf{u}\cdot\nabla\rho$ in $C(0,T;H^{k-2}(\Omega))$ and from (\ref{Leray_velocity}), we get $\pa_t \mathbf{u}^{[m]}=\mathbb{L}\left[(0,\rho^{[m]})\right]-\mathbf{u}^{[m]}-\mathbb{L}\left(\mathbb{Q}_m,\mathbb{P}_m\right)\left[\left(\mathbf{u}^{[m]}\cdot\nabla\right) \mathbf{u}^{[m]}\right]\rightarrow \mathbb{L}\left[(0,\rho)\right]-\mathbf{u}-\mathbb{L}\left[\left(\mathbf{u}\cdot\nabla\right) \mathbf{u}\right]$ in $C(0,T;H^{k-2}(\Omega)\times H^{k-2}(\Omega))$.

Since $\rho^{[m]}\rightarrow \rho$ and $\mathbf{u}^{[m]}\rightarrow \mathbf{u}$ in $C(0,T;H^{k-1}(\Omega))$ and in $C(0,T;H^{k-2}(\Omega)\times H^{k-2}(\Omega))$ respectively, the limit distributions of $\pa_t \rho^{[m]}$ and $\pa_t\mathbf{u}^{[m]}$ must be  $\pa_t\rho$ and $\pa_t\mathbf{u}$  for the Closed Graph theorem \cite{Brezis}.\\

\noindent
So, in particular, it follows that the pair $(\rho(t),\mathbf{u}(t))$ is the unique classical solution of (\ref{perturbado_local_existence}) which lies in $C(0,T; H^{k-1}(\Omega))\times C(0,T; H^{k-1}(\Omega)\times H^{k-1}(\Omega))$. Moreover, we can follow the same ideas of \cite[p.~110]{Majda-Bertozzi} and to prove, as we did in \cite{Castro-Cordoba-Lear} that  $(\rho(t),\mathbf{u}(t))\in C(0,T; H^{k}(\Omega))\times C(0,T; H^{k}(\Omega)\times H^{k}(\Omega))$. Note that $\mathbb{L}\left[ \pa_t \mathbf{u}+\mathbf{u}+(\mathbf{u}\cdot\nabla)\mathbf{u}-(0,\rho)\right]=0$ implies 
$$ \pa_t \mathbf{u}+\mathbf{u}+(\mathbf{u}\cdot\nabla)\mathbf{u}=-\nabla P+ (0,\rho)$$
for some scalar function $P(\mathbf{x},t)$.\\

Since for every $m\in\N$ we have that $\rho^{[m]}=\mathbb{P}_m[\rho^{[m]}]\in X^{k}(\Omega)$ and $\mathbf{u}^{[m]}=(\mathbb{Q}_m[u_1^{[m]}],\mathbb{P}_m[u_2^{[m]}])\in\mathbb{X}^k(\Omega)$, i.e. $\partial_y^{n}\rho^{[m]}|_{\partial\Omega}=\partial_y^{n}u_2^{[m]}|_{\partial\Omega}=\partial_y^{n+1}u_1^{[m]}|_{\partial\Omega}=0$ for any even number $n$ and this property is closed, we obtain that the limiting function also has the desired property, which concludes that the solution $(\rho,\mathbf{u})$ lies in $C\left(0,T;X^{k}(\Omega)\right)\times C\left(0,T;\mathbb{X}^{k}(\Omega)\right).$\\

Finally, applying the Gronwall's lemma on the above estimate (\ref{e_k^sharp}) and the previous convergence results, for all $t\in[0,T)$ we deduce:
\begin{align*}
e_{k}^{[m]}(t)&\leq e_{k}^{[m]}(0)\,\exp\left[\widetilde{C} \int_{0}^{t}\left( ||\nabla\rho^{[m]}||_{L^{\infty}(\Omega)}(s)+||\nabla\mathbf{u}^{[m]}||_{L^{\infty}(\Omega)}(s)\right)\,ds \right]\\
&\leq e_{k}(0)\,\exp\left[\widetilde{C}\int_{0}^{t}\left( ||\nabla\rho||_{L^{\infty}(\Omega)}(s)+||\nabla\mathbf{u}||_{L^{\infty}(\Omega)}(s)\right)\,ds \right]
\end{align*}
and by lower
semicontinuity we obtain (\ref{estimate_BKM}).
\end{proof}

\begin{thm}
Let $\left(\rho(t),\mathbf{u}(t)\right)$ be a solution of (\ref{perturbado_local_existence}) in the class $C\left(0,T,X^{k}(\Omega)\right)\times C\left(0,T,\mathbb{X}^{k}(\Omega)\right)$ whit $\rho(0)\in X^k$ and $\mathbf{u}(0)\in\mathbb{X}^k$. If $T=T^{\star}$ is the first time such that $\left(\rho(t),\mathbf{u}(t)\right)$ is not contained in this class, then
\begin{equation*}
\int_{0}^{T^{\star}}\left(||\nabla\mathbf{u}||_{L^{\infty}(\Omega)}(s)+||\nabla\rho||_{L^{\infty}(\Omega)}(s)\right)\, ds=\infty.
\end{equation*}
\end{thm}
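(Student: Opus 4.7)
The plan is to argue by contradiction and invoke \eqref{estimate_BKM} together with the local existence Theorem \ref{local_existence}. Suppose, toward a contradiction, that
$$M := \int_0^{T^{\star}}\bigl(\|\nabla \mathbf{u}\|_{L^\infty(\Omega)}(s) + \|\nabla \rho\|_{L^\infty(\Omega)}(s)\bigr)\,ds < \infty.$$
Then \eqref{estimate_BKM} applied on any subinterval $[0,t]\subset[0,T^{\star})$ gives the uniform-in-time bound
$$\sup_{0 \le t < T^{\star}} e_k(t) \;\le\; e_k(0)\exp\bigl(\widetilde{C}\,M\bigr) \;=:\; K \;<\; \infty.$$

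Next, I would pick a sequence $t_n \nearrow T^{\star}$. Since $(\rho(t_n),\mathbf{u}(t_n)) \in X^k(\Omega)\times \mathbb{X}^k(\Omega)$ with $e_k(t_n)\le K$ and in particular $e_3(t_n)\le K$, Theorem \ref{local_existence} produces a solution starting from the datum $(\rho(t_n),\mathbf{u}(t_n))$ on an interval $[t_n,t_n+\tau]$, where the existence time $\tau>0$ depends only on $e_3(t_n)$ and is therefore bounded below by a positive constant \emph{independent of $n$}. Choosing $n$ large enough that $t_n+\tau>T^{\star}$ and invoking the uniqueness part of Theorem \ref{local_existence}, these extensions must coincide with $(\rho,\mathbf{u})$ on their common interval of definition. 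This yields a solution in the class $C(0,t_n+\tau;X^k(\Omega))\times C(0,t_n+\tau;\mathbb{X}^k(\Omega))$ with $t_n+\tau>T^{\star}$, contradicting the definition of $T^{\star}$ as the first exit time from this class.

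There is essentially no real obstacle: all the analytic work is already contained in the energy inequality \eqref{estimate_BKM} and in the fact that the local existence time of Theorem \ref{local_existence} depends only on the lower-order energy $e_3$. The one minor point worth checking is that $e_k(t_n)\le K$ follows from the uniform bound derived above via continuity of the solution in time, which legitimizes restarting the local-existence machinery at times arbitrarily close to $T^{\star}$.
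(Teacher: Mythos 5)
Your proposal is correct and follows essentially the same route as the paper, which simply states that the result follows from estimate \eqref{estimate_BKM}; you have filled in the standard continuation argument (uniform bound on $e_k$ from \eqref{estimate_BKM}, restart of Theorem \ref{local_existence} near $T^{\star}$ with an existence time controlled by $e_3$ alone, and uniqueness to glue the extensions) that the paper leaves implicit.
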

\begin{proof}
This result follows from estimate \eqref{estimate_BKM}.
\end{proof}

\section{Energy methods for the damping Boussineq equations}\label{Sec_5}
From what we have seen, we know that for $\left(\rho(0),\mathbf{u}(0)\right)\in X^k\times\mathbb{X}^k$ there exits $T>0$ such that $\left(\rho(t),\mathbf{u}(t)\right)$ is a solution of (\ref{perturbado_local_existence}) for all $t\in[0,T)$. Moreover, if $T^{\star}$ is the first time such that $\left(\rho(t),\mathbf{u}(t)\right)$ is not contained in this class $X^k\times\mathbb{X}^k$, then
$$\int_{0}^{T^{\star}}\left(||\nabla\mathbf{u}||_{L^{\infty}(\Omega)}(s)+||\nabla\rho||_{L^{\infty}(\Omega)}(s)\right)\, ds=\infty.$$
Therefore, to control $e_3(T)$ allow us to extend the solution smoothly past time $T$, where we remember that:
$$e_k(t):=||\mathbf{u}||_{H^k(\Omega)}^2(t)+||\rho||_{H^k(\Omega)}^2(t).$$
Finally, note that $\rho(t)\in X^k$ implies that $\rho(t)\in H^{k}(\Omega)$, so the term ``$\partial^{k-1}\rho$ restricted to $\partial\Omega$'' has perfect sense, as long as the solution exits. Analogously, as $\mathbf{u}(t)\in\mathbb{X}^k$, can we talk about ``$\partial^{k-1}\mathbf{u}$ restricted to $\partial\Omega$''.\\

\subsection{Energy Space}
To motivate the energy space in which we will work, we present quickly the linearized problem of (\ref{System_rho}). This is:
\begin{equation*}
\begin{cases}
\partial_t \rho  =- u_2\\
\partial_t \mathbf{u}+ \mathbf{u}=-\nabla \Pi^{L}+(0,\bar{\rho})\\
\nabla\cdot \textbf{u}=0
\end{cases}
\end{equation*}
besides the boundary condition $\mathbf{u}\cdot \mathbf{n}=0$ on $\partial\Omega$. It is easy to check that:
$$\tfrac{1}{2}\partial_t \left\lbrace ||\mathbf{u}||_{L^2}^2(t)+\,||\rho||_{L^2}^2(t)\right\rbrace=-||\mathbf{u}||_{L^2}^2(t) \quad \text{and} \quad  \tfrac{1}{2}\partial_t \left\lbrace ||\partial_t \mathbf{u}||_{L^2}^2(t)+||u_2||_{L^2}^2(t)\right\rbrace=-||\partial_t \mathbf{u}||_{L^2}^2(t).$$

\noindent
%
By attending to this, on one hand for $k\in\N$ we define the energy
$$E_k(t):=\tfrac{1}{2}\left\lbrace ||\textbf{u}||_{H^k(\Omega)}^2(t)+||\rho||_{H^k(\Omega)}^2(t)+||\partial_t \textbf{u}||_{H^k(\Omega)}^2(t)+||u_2||_{H^k(\Omega)}^2(t)\right\rbrace$$
and on the other hand, we define the auxiliar weighted energy
$$\mathcal{\dot{E}}_{k}(t):=\tfrac{1}{2}\left\lbrace||\rho||_{\dot{H}^{k}(\Omega)}^2(t)+\int_{\Omega} |\partial^{k}\textbf{u}(x,y,t)|^2\,\left(1+\partial_y\tilde{\rho}(y,t) \right)\, dxdy\right\rbrace. $$

The introduction of the weight $1+\partial_y\tilde{\rho}(y,t)$ in the last term of $\dot{\mathcal{E}}_k(t)$ is not obvious and plays a crucial role. We are forth to do it in order to control all the terms. Finally, our energy space will be
\begin{equation}\label{Energy}
\mathfrak{E}_{k+1}(t):=E_k(t)+\mathcal{\dot{E}}_{k+1}(t).
\end{equation}
Note that if our weight $1+\partial_y\tilde{\rho}(y,t)$ is non-negative then our energy is positive definite. So, our energy space is perfectly well defined if $\tilde{\rho}$ is small enough. Moreover, it is clear that  $e_k(t)\leq \mathfrak{E}_{k+1}(t) $. \\

%

\subsection{A Priori Energy Estimates}
In what follows, we assume that $\left(\rho(t),\mathbf{u}(t)\right)\in X^{k+1}(\Omega)\times\mathbb{X}^{k+1}(\Omega)$ is a solution of (\ref{System_rho}) for any $t\geq 0$. Then, this section is devoting to prove the following result.

\begin{thm}\label{main_energy_estimate}
There exist $0<C<1$ and $\tilde{C}>0$ large enough such that for $k\geq 6$ the following estimate holds:
\begin{align}\label{energy_estimate}
\partial_t \mathfrak{E}_{k+1}(t)&\leq -(C-\tilde{C}\, \Psi_1(t))\left[||\nabla\Pi-(0,\bar{\rho})||_{H^k}^2(t)+||\left(\mathbf{u}\cdot\nabla\right) \mathbf{u}||_{H^k}^2(t)+||\mathbf{u}||_{H^k}^2(t)+||\partial_t \mathbf{u}||_{H^k}^2(t)\right] \nonumber\\
&\phantom{=}-\left(1-\tilde{C}\,\Psi_2(t)\right)\,\left(\int_{\Omega} |\partial^{k+1}\mathbf{u}(x,y,t)|^2\,(1+\partial_y\tilde{\rho}(y,t)) \,dxdy\right) \nonumber\\
&\phantom{=}+ ||\mathbf{u}||_{H^{4}}\,\mathfrak{E}_{k+1}(t)
\end{align}
with 
\begin{align*}
&\Psi_1(t):=||\rho||_{H^{k+1}}+||\mathbf{u}||_{H^k}+\left(\frac{1+||\partial_y\tilde{\rho}||_{L^{\infty}}}{1-||\partial_y\tilde{\rho}||_{L^{\infty}}}\right)^{1/2}\,\left(\int_{\Omega} |\partial^{k+1}\mathbf{u}|^2\,(1+\partial_y\tilde{\rho})\, dxdy\right)^{1/2},\\
&\Psi_2(t):=\frac{||\rho||_{H^{k+1}}+||\mathbf{u}||_{H^k}+||\rho||_{H^{k+1}}||\mathbf{u}||_{H^k}}{1-||\partial_y\tilde{\rho}||_{L^{\infty}}}.
\end{align*}
\end{thm}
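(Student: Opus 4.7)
The plan is to differentiate in time each building block of $\mathfrak{E}_{k+1}(t)$ separately, extract the dissipation produced by the velocity damping $+\mathbf{u}$, and rely on the specific algebraic structure of the buoyancy--density coupling to kill the worst top-order cross term between $\partial^{k+1}\bar\rho$ and $\partial^{k+1} u_2$. Two structural facts drive the whole argument. First, the momentum equation rewritten as $\nabla\Pi-(0,\bar\rho)=-(\mathbf{u}\cdot\nabla)\mathbf{u}-\partial_t\mathbf{u}-\mathbf{u}$ lets one \emph{recover}, a posteriori from control on $\|\mathbf{u}\|_{H^k}$ and $\|\partial_t\mathbf{u}\|_{H^k}$, the dampings on $\|\nabla\Pi-(0,\bar\rho)\|_{H^k}$ and $\|(\mathbf{u}\cdot\nabla)\mathbf{u}\|_{H^k}$ appearing in the target, modulo a nonlinear loss handled via \eqref{product_rule} as $\|(\mathbf{u}\cdot\nabla)\mathbf{u}\|_{H^k}\lesssim\|\mathbf{u}\|_{H^4}\|\mathbf{u}\|_{H^k}$. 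Second, the weight $1+\partial_y\tilde\rho$ in $\dot{\mathcal{E}}_{k+1}$ is \emph{precisely} the coefficient of $u_2$ in the $\bar\rho$ equation of \eqref{System_rho_tilde_barra}: this is what makes the top-order cross term between density and velocity cancel identically.

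\textbf{The low-order block $E_k$.} I would differentiate the four summands simultaneously. For $\|\mathbf{u}\|_{H^k}^2+\|\rho\|_{H^k}^2$, applying $\partial^s$ with $|s|\leq k$ and pairing with $(\partial^s\mathbf{u},\partial^s\rho)$ produces, besides commutators controlled by \eqref{Commutator}, the damping $-\|\partial^s\mathbf{u}\|_{L^2}^2$ together with two cross terms whose sum is $-(\partial^s u_2,\partial^s\tilde\rho)$; this vanishes because $\widetilde{u_2}\equiv 0$ (by $\nabla\cdot\mathbf{u}=0$ and $u_2|_{\partial\Omega}=0$) while $\tilde\rho$ depends only on $y$, so the pairing kills unless $\partial^s=\partial_y^s$, in which case $\int\partial_y^s u_2\,dx=0$ pointwise in $y$. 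For $\|\partial_t\mathbf{u}\|_{H^k}^2+\|u_2\|_{H^k}^2$, I would differentiate the momentum equation once in time and then apply $\partial^s$ before pairing with $\partial^s\partial_t\mathbf{u}$. The pressure contribution drops by $\nabla\cdot\partial_t\mathbf{u}=0$ and the vanishing traces built into $\mathbb{X}^{k+1}$; the buoyancy cross term combines with $\tfrac{d}{dt}\tfrac12\|u_2\|_{H^k}^2$, after substituting $\partial_t\bar\rho=-u_2-\overline{\mathbf{u}\cdot\nabla\bar\rho}$, to leave $-\|\partial_t\mathbf{u}\|_{H^k}^2$ plus commutators, exactly mirroring the linearised identities recorded at the start of Section~\ref{Sec_5}. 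Once $-\|\mathbf{u}\|_{H^k}^2-\|\partial_t\mathbf{u}\|_{H^k}^2$ is in hand, the rearranged momentum equation furnishes a coefficient $-c$ in front of $\|\nabla\Pi-(0,\bar\rho)\|_{H^k}^2+\|(\mathbf{u}\cdot\nabla)\mathbf{u}\|_{H^k}^2$ up to a nonlinear remainder $\|\mathbf{u}\|_{H^4}^2\|\mathbf{u}\|_{H^k}^2$ absorbable into the final $\|\mathbf{u}\|_{H^4}\mathfrak{E}_{k+1}$ term.

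\textbf{The weighted top-order block $\dot{\mathcal{E}}_{k+1}$.} Here I would split $\rho=\bar\rho+\tilde\rho$ and use \eqref{System_rho_tilde_barra}, noting first that $\|\rho\|_{\dot H^{k+1}}^2=\|\bar\rho\|_{\dot H^{k+1}}^2+\|\tilde\rho\|_{\dot H^{k+1}}^2$ since the cross product $(\partial^{k+1}\bar\rho,\partial^{k+1}\tilde\rho)$ vanishes. Differentiating $\tfrac12\|\bar\rho\|_{\dot H^{k+1}}^2$ via the $\bar\rho$-equation yields, besides transport commutators, the cross term $-(\partial^{k+1}\bar\rho,(1+\partial_y\tilde\rho)\partial^{k+1} u_2)$ directly from the forcing $-(1+\partial_y\tilde\rho)u_2$. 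Differentiating the weighted velocity integral $\tfrac12\int|\partial^{k+1}\mathbf{u}|^2(1+\partial_y\tilde\rho)$ produces (i) the damping $-\int|\partial^{k+1}\mathbf{u}|^2(1+\partial_y\tilde\rho)$, (ii) the buoyancy cross term $+(\partial^{k+1}\bar\rho,(1+\partial_y\tilde\rho)\partial^{k+1}u_2)$, where the pressure is killed by $\nabla\cdot\mathbf{u}=0$ together with the vanishing traces in $\mathbb{X}^{k+1}$, (iii) a weight-time-derivative piece $\tfrac12\int|\partial^{k+1}\mathbf{u}|^2\partial_y\partial_t\tilde\rho$ with $\partial_t\tilde\rho=-\widetilde{\mathbf{u}\cdot\nabla\bar\rho}$ from \eqref{System_rho_tilde_barra}, and (iv) transport commutators. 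Differentiating $\tfrac12\|\tilde\rho\|_{\dot H^{k+1}}^2$ contributes only commutator errors, since its equation carries no $u_2$ forcing. The main cross terms from (ii) and from the $\bar\rho$-equation cancel \emph{identically}—this is the whole point of introducing the weight—and the weight-time-derivative together with all remaining commutators are bounded in $L^\infty$ by the Sobolev embedding and packaged into the factors $\Psi_1$ and $\Psi_2$ multiplying the two dampings in \eqref{energy_estimate}. Young's inequality is applied on any commutator carrying a top-order factor $\partial^{k+1}\mathbf{u}$ so as to shift it into the weighted damping with a small coefficient $\tilde C\Psi_2$.

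\textbf{Main obstacle.} The core difficulty is the top-order bookkeeping: every Leibniz redistribution of $\partial^{k+1}$ across the product $(1+\partial_y\tilde\rho)u_2$ must be checked to either belong to the exactly-cancelling main term or to be genuinely sub-leading and absorbable into the $\Psi_2$-controlled piece of the weighted damping; likewise for the transport commutator $\partial^{k+1}(\mathbf{u}\cdot\nabla\bar\rho)$, whose most singular contribution, in which all derivatives land on $\bar\rho$, vanishes after integration by parts using $\nabla\cdot\mathbf{u}=0$ and $u_2|_{\partial\Omega}=0$. A related subtlety is that every integration by parts at this order generates a boundary trace of $\partial^{k}$ of the unknowns; these vanish \emph{exactly} because $(\rho(t),\mathbf{u}(t))\in X^{k+1}(\Omega)\times\mathbb{X}^{k+1}(\Omega)$ and this membership is preserved by the flow (Section~\ref{Sec_3}). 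This is what lets the pressure drop out in every computation and what licenses the free transfer of $y$-derivatives against $\partial^{k+1}\mathbf{u}$ inside the weighted integral. With the cancellation identified, the boundary terms disposed of and all remaining pieces placed into the correct $\Psi_1,\Psi_2$ or $\|\mathbf{u}\|_{H^4}$ factor, the three groups of terms in \eqref{energy_estimate} are obtained.
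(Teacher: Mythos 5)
Your overall architecture matches the paper's: the splitting $\mathfrak{E}_{k+1}=E_k+\dot{\mathcal{E}}_{k+1}$, the use of the rearranged momentum equation to recover dissipation on $\|\nabla\Pi-(0,\bar\rho)\|_{H^k}$ and $\|(\mathbf{u}\cdot\nabla)\mathbf{u}\|_{H^k}$ from $-\|\mathbf{u}\|_{H^k}^2-\|\partial_t\mathbf{u}\|_{H^k}^2$ (Lemma~\ref{lineal_con_theta}), and the identification of the weight $1+\partial_y\tilde\rho$ as the exact coefficient of $u_2$ in the $\bar\rho$-equation, producing the cancellation of the top-order cross term $\bigl(\partial^{k+1}\bar\rho,(1+\partial_y\tilde\rho)\partial^{k+1}u_2\bigr)$. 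However, there are two genuine gaps. First, your claim that in the weighted velocity integral ``the pressure is killed by $\nabla\cdot\mathbf{u}=0$ together with the vanishing traces'' is false: since the weight depends on $y$, integrating by parts in $-\int\partial^{k+1}\mathbf{u}\cdot\partial^{k+1}\nabla\Pi\,(1+\partial_y\tilde\rho)$ leaves the residual $\int_\Omega\partial^{k+1}u_2\,\partial^{k+1}\Pi\,\partial_y^2\tilde\rho\,dxdy$ (the term $I^7_1$ of the paper). This term is not negligible and cannot be bounded naively, because the energy controls $\|\nabla\Pi-(0,\bar\rho)\|_{H^k}$ but not $\|\Pi\|_{H^{k+1}}$; the paper must split into the cases $\partial^{k+1}=\partial^k\partial_x$ and $\partial^{k+1}=\partial_y^{k+1}$, and in the second case integrate by parts using $\partial_yu_2=-\partial_xu_1$ to trade a $y$-derivative on $\Pi$ for an $x$-derivative, so that only $\partial_x\Pi$ (a component of $\nabla\Pi-(0,\bar\rho)$) appears.

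Second, you dismiss the contribution of $\tfrac12\partial_t\|\tilde\rho\|_{\dot H^{k+1}}^2$ as ``only commutator errors.'' In fact $\partial_t\tilde\rho=-\partial_y\widetilde{(u_2\bar\rho)}$ produces, after integration by parts, $\int_\Omega\partial_y^{k+2}\tilde\rho\,\partial_y^{k+1}(u_2\bar\rho)\,dxdy$, whose leading piece $\int\partial_y^{k+2}\tilde\rho\,u_2\,\partial_y^{k+1}\bar\rho$ carries $k+2$ derivatives on $\tilde\rho$ and is \emph{not} controlled by $\mathfrak{E}_{k+1}$. The proof only closes because this term cancels exactly against the worst term of the commutator $\bigl((1+\partial_y\tilde\rho)\partial^{k+1}u_2-\partial^{k+1}((1+\partial_y\tilde\rho)u_2),\partial^{k+1}\bar\rho\bigr)$, namely $-\int\partial_y^{k+2}\tilde\rho\,u_2\,\partial_y^{k+1}\bar\rho$ (this is the cancellation inside $I^6=I^6_1+I^6_2$ in the paper, and it is the second reason the weighted energy is indispensable). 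Your proposal identifies only the first cancellation; without the second one, and without a correct treatment of the residual pressure term, the estimate \eqref{energy_estimate} cannot be closed.
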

As we want to prove a global existence in time result for small data, this is $\mathfrak{E}_{k+1}(t)<< 1$. Then, the first two terms in the energy estimate (\ref{energy_estimate}) are ``good'' ones, because it has the right sign. In consequence, we fix our attention in the last term. If we have a ``good'' time decay of $||\mathbf{u}||_{H^{4}}(t)$, we will be able to prove that $\mathfrak{E}_{k+1}(t)$ remains small for all time by a boostraping argument.\\

Then, we are now in a position to obtain the previous energy estimates. To do this, we study the time evolution of $E_{k}(t)$ and $\dot{\mathcal{E}}_{k+1}(t)$ independently. 

\subsubsection{$E_{k}(t)$ Energy Estimate}
To do this we use the system (\ref{System_rho}). We start proving the following statement.
\begin{lemma}
The next estimate holds:
\begin{align}\label{E_k}
\partial_t E_k(t)=&-||\mathbf{u}||_{H^{k}}^2-||\partial_t \mathbf{u}||_{H^k}^2\\  \nonumber
&-(\mathbf{u}\cdot\nabla\rho,\rho)-(\partial^{k}(\mathbf{u}\cdot\nabla\rho),\partial^{k}\rho)\\ \nonumber
&-\langle \mathbf{u},\left(\mathbf{u}\cdot\nabla \right) \mathbf{u}\rangle-\langle \partial^{k}\mathbf{u},\partial^k\,\left[\left(\mathbf{u}\cdot\nabla \right) \mathbf{u}\right]\rangle\\ \nonumber
&-(\partial_t u_2,\mathbf{u}\cdot\nabla\rho)-(\partial^{k}\partial_t u_2,\partial^{k}(\mathbf{u}\cdot\nabla\rho))\\ \nonumber
&-\langle\partial_t \mathbf{u},\partial_t [\left(\mathbf{u}\cdot\nabla\right) \mathbf{u}]\rangle-\langle\partial^{k}\partial_t \mathbf{u},\partial^{k}\partial_t [\left(\mathbf{u}\cdot\nabla\right) \mathbf{u}]\rangle. \nonumber
\end{align}
\end{lemma}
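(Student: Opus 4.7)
The plan is to differentiate each of the four summands of $E_k(t)$ in time and assemble the result, exploiting cancellations between the momentum and density equations. I would treat the $L^2$ and $\partial^k$ levels in parallel, since the structural identities are identical.

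First, for the pair $\tfrac{1}{2}(||\mathbf{u}||_{L^2}^2+||\rho||_{L^2}^2)$, I would pair the momentum equation with $\mathbf{u}$ and the density equation with $\rho$. The pressure term $(\nabla\Pi,\mathbf{u})$ vanishes after integration by parts using $\nabla\cdot\mathbf{u}=0$ and $u_2|_{\partial\Omega}=0$. The buoyancy term $(\bar{\rho},u_2)$ equals $(\rho,u_2)$: indeed $\tilde{u}_2\equiv 0$, since $\partial_y\tilde{u}_2=-\widetilde{\partial_x u_1}=0$ by periodicity and $\tilde{u}_2(\pm 1)=0$ by the no-penetration condition, so $(u_2,\tilde{\rho})=0$. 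The resulting $(\rho,u_2)$ contribution then cancels exactly against $-(\rho,u_2)$ coming from the density equation tested against $\rho$, leaving precisely $-||\mathbf{u}||_{L^2}^2-\langle\mathbf{u},(\mathbf{u}\cdot\nabla)\mathbf{u}\rangle-(\mathbf{u}\cdot\nabla\rho,\rho)$.

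The same mechanism drives the $\partial^k$-level computation: apply $\partial^k$ to both equations and pair with $\partial^k\mathbf{u}$ and $\partial^k\rho$ respectively. The identity $(\partial^k\bar{\rho},\partial^k u_2)=(\partial^k\rho,\partial^k u_2)$ still holds because either $\partial^k$ contains a $\partial_x$ (killing $\tilde{\rho}$) or it is purely $\partial_y^k$ (in which case the contribution vanishes from $\tilde{u}_2\equiv 0$). For the $\partial_t\mathbf{u}$ and $u_2$ summands, I would differentiate the momentum equation in time, pair the result with $\partial_t\mathbf{u}$, and discard the pressure boundary term using $\partial_t u_2|_{\partial\Omega}=\partial_t(u_2|_{\partial\Omega})=0$. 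The forcing $(\partial_t\bar{\rho},\partial_t u_2)$ equals $(\partial_t\rho,\partial_t u_2)$ by the same $\tilde{u}_2\equiv 0$ argument; substituting $\partial_t\rho=-u_2-\mathbf{u}\cdot\nabla\rho$ produces a term $-(u_2,\partial_t u_2)$ which is cancelled exactly by $\tfrac{1}{2}\partial_t||u_2||_{L^2}^2=(u_2,\partial_t u_2)$, leaving $-||\partial_t\mathbf{u}||_{L^2}^2-\langle\partial_t\mathbf{u},\partial_t[(\mathbf{u}\cdot\nabla)\mathbf{u}]\rangle-(\mathbf{u}\cdot\nabla\rho,\partial_t u_2)$. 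The $\partial^k$ version is identical after applying $\partial^k$ beforehand.

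The principal technical obstacle is verifying that the pressure boundary integrals at the $\partial^k$ level genuinely vanish: for the $\partial^k$ momentum calculation one needs $\int_{\partial\Omega}\partial^k\Pi\,\partial^k u_2\,d\sigma=0$, and for the time-differentiated version the same with $\partial^k\partial_t$ in place of $\partial^k$. This is precisely the reason for the design of $\mathbb{X}^{k+1}=Y^{k+1}\times X^{k+1}$: even $y$-derivatives of $u_2$ vanish on $\partial\Omega$ directly from $X^{k+1}$, while odd $y$-derivatives are reduced via the divergence-free relation $\partial_y u_2=-\partial_x u_1$ together with periodic integration by parts in $x$ to the boundary conditions on odd $y$-derivatives of $u_1$ built into $Y^{k+1}$. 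Once this boundary reduction is in hand, summing the four contributions yields the identity in the lemma.
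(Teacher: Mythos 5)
Your overall route coincides with the paper's: split $E_k$ into its four constituents, test the momentum equation against $\mathbf{u}$ (resp.\ $\partial_t$ of it against $\partial_t\mathbf{u}$) and use the transport equation $\partial_t\rho+\mathbf{u}\cdot\nabla\rho=-u_2$ to convert the buoyancy pairing $(\rho,u_2)$ (resp.\ $(\partial_t\rho,\partial_t u_2)$) into the exact time derivative of $\tfrac12\|\rho\|^2$ (resp.\ $\tfrac12\|u_2\|^2$) plus the advective remainders; the paper performs this by direct substitution of $u_2=-\partial_t\rho-\mathbf{u}\cdot\nabla\rho$, which is algebraically the same as your ``test the density equation and add'' step. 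Your observations that $\tilde u_2\equiv 0$ so that $(\bar\rho,u_2)=(\rho,u_2)$, and that this identity survives at the $\partial^k$ level, are correct and consistent with observation (\ref{u2_zero_average}) of the paper.

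The one step that would fail as written is your disposal of the boundary integral $\int_{\partial\Omega}\partial^k\Pi\,\partial^k u_2\,d\sigma$ when $\partial^k=\partial_x^{s_1}\partial_y^{s_2}$ with $s_2$ odd. Writing $\partial_y^{s_2}u_2=-\partial_x\partial_y^{s_2-1}u_1$ and integrating by parts in $x$ leaves you with $\partial_y^{s_2-1}u_1$ on the boundary, and $s_2-1$ is \emph{even}; the space $Y^{k+1}$ only kills \emph{odd} $y$-derivatives of $u_1$ on $\partial\Omega$, so this reduction does not close. The mechanism the paper actually uses (stated explicitly in the proof of Lemma \ref{estimate_E_k+1_tilde}, cf.\ (\ref{3-term})) is that the pressure itself satisfies $\Pi\in Y^{k+1}(\Omega)$, so for $s_2$ odd the factor $\partial_x^{s_1}\partial_y^{s_2}\Pi$ already vanishes on $\partial\Omega$, while for $s_2$ even the factor $\partial_x^{s_1}\partial_y^{s_2}u_2$ vanishes because $u_2\in X^{k+1}(\Omega)$. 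With that substitution your argument is complete and matches the paper's.
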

\begin{proof}
First of all, we remember the definition of $E_{k}(t)$. Then, we split the proof in two parts. On one hand, we can be able to prove that:
\begin{align}\label{u_H^k}
\tfrac{1}{2}\partial_t\left\lbrace||\mathbf{u}||_{H^{k}}^2+||\rho||_{H^{k}}^2\right\rbrace=&-||\mathbf{u}||_{H^{k}}^2\\ \nonumber
&-(\mathbf{u}\cdot\nabla\rho,\rho)-(\partial^{k}(\mathbf{u}\cdot\nabla\rho),\partial^{k}\rho)\\ \nonumber
&-\langle \mathbf{u},\left(\mathbf{u}\cdot\nabla \right) \mathbf{u}\rangle-\langle \partial^{k}\mathbf{u},\partial^k\,\left[\left(\mathbf{u}\cdot\nabla \right) \mathbf{u}\right]\rangle.
\end{align}
On the other hand, we will prove that:
\begin{align}\label{partial_t_u_H^k}
\tfrac{1}{2}\partial_t \lbrace ||\partial_t \mathbf{u}||_{H^k}^2+||u_2||_{H^k}^2\rbrace=&-||\partial_t \mathbf{u}||_{H^k}^2\\ \nonumber
& -(\partial_t u_2,\mathbf{u}\cdot\nabla\rho)-(\partial^{k}\partial_t u_2,\partial^{k}(\mathbf{u}\cdot\nabla\rho))\\ \nonumber
&-\langle\partial_t \mathbf{u},\partial_t [\left(\mathbf{u}\cdot\nabla\right) \mathbf{u}]\rangle-\langle\partial^{k}\partial_t \mathbf{u},\partial^{k}\partial_t [\left(\mathbf{u}\cdot\nabla\right) \mathbf{u}]\rangle.
\end{align}

\noindent
By putting together (\ref{u_H^k}) and (\ref{partial_t_u_H^k}), we achieve our goal.
\noindent
To prove (\ref{u_H^k}), we start with the $L^2$ norm, one can check that:
\begin{align*}
\tfrac{1}{2}\partial_t||\textbf{u}||_{L^2}^2&=\langle\textbf{u},\partial_t \textbf{u}\rangle=\langle\textbf{u},-\nabla P+(0,\rho)-\textbf{u}-\left(\textbf{u}\cdot\nabla \right) \textbf{u}\rangle\\
&=\langle \textbf{u},-\nabla P+(0,\rho)\rangle -||\textbf{u}||_{L^2}^2-\langle \textbf{u},\left(\textbf{u}\cdot\nabla \right) \textbf{u}\rangle
\end{align*}
then, by the  incompressibility  we get:
\begin{align*}
\tfrac{1}{2}\partial_t||\textbf{u}||_{L^2}^2&=(u_2,\rho)-||\textbf{u}||_{L^2}^2-\langle\textbf{u},\left(\textbf{u}\cdot\nabla \right) \textbf{u}\rangle.
\end{align*}
As $\partial_t \rho + \textbf{u}\cdot\nabla\rho=-u_2$, we obtain that:
\begin{align*}
\tfrac{1}{2}\partial_t||\textbf{u}||_{L^2}^2&=-\tfrac{1}{2}\partial_t||\rho||_{L^2}^2-(\textbf{u}\cdot\nabla\rho,\rho)-||\textbf{u}||_{L^2}^2-\langle\textbf{u},\left(\textbf{u}\cdot\nabla \right) \textbf{u}\rangle
\end{align*}
and consequently, we have proved that:
\begin{equation}\label{L2_u}
\tfrac{1}{2}\partial_t\left\lbrace||\textbf{u}||_{L^2}^2+||\rho||_{L^2}^2\right\rbrace=-||\textbf{u}||_{L^2}^2-(\textbf{u}\cdot\nabla\rho,\rho)-\langle\textbf{u},\left(\textbf{u}\cdot\nabla \right) \textbf{u}\rangle.
\end{equation}
Doing the same computation in $\dot{H}^{k}$ we get:
\begin{align}\label{L2_D_k_u}
\tfrac{1}{2}\partial_t\left\lbrace||\textbf{u}||_{\dot{H}^{k}}^2+||\rho||_{\dot{H}^{k}}^2\right\rbrace=&-||\textbf{u}||_{\dot{H}^{k}}^2\\ \nonumber
&-(\partial^{k}(\textbf{u}\cdot\nabla\rho),\partial^{k}\rho)-\langle \partial^{k}\textbf{u},\partial^k\,\left[\left(\textbf{u}\cdot\nabla \right) \textbf{u}\right]\rangle.
\end{align}
By putting together (\ref{L2_u}) and (\ref{L2_D_k_u}), we obtain (\ref{u_H^k}). To prove (\ref{partial_t_u_H^k}), we start again with the $L^2$ norm, one can check that:
\begin{align*}
\tfrac{1}{2}\partial_t||\partial_t \textbf{u}||_{L^2}^2&=\langle\partial_t \textbf{u},\partial_t^2 \textbf{u}\rangle=\langle \partial_t \textbf{u},\partial_t[-\nabla P+(0,\rho)]-\partial_t \textbf{u}-\partial_t \left(\textbf{u}\cdot\nabla\right) \textbf{u}\rangle\\
&=\langle\partial_t \textbf{u},\partial_t[-\nabla P+(0,\rho)]\rangle-||\partial_t \textbf{u}||_{L^2}^2-\langle\partial_t \textbf{u},\partial_t \left(\textbf{u}\cdot\nabla\right) \textbf{u}\rangle
\end{align*}
as before, by the incompressibility  we get:
$$\tfrac{1}{2}\partial_t||\partial_t \textbf{u}||_{L^2}^2=(\partial_t u_2,\partial_t \rho)-||\partial_t \textbf{u}||_{L^2}^2-\langle\partial_t \textbf{u},\partial_t [\left(\textbf{u}\cdot\nabla\right) \textbf{u}]\rangle.$$
As $\partial_t \rho=-u_2-\textbf{u}\cdot\nabla\rho$, we obtain that:
\begin{equation}\label{L2_partial_t_u}
\tfrac{1}{2}\partial_t\left\lbrace||\partial_t \textbf{u}||_{L^2}^2+ ||u_2||_{L^2}^2\right\rbrace=-||\partial_t \textbf{u}||_{L^2}^2-(\partial_t u_2,\textbf{u}\cdot\nabla\rho)-\langle\partial_t \textbf{u},\partial_t [\left(\textbf{u}\cdot\nabla\right) \textbf{u}]\rangle.
\end{equation}
We can proceed similarly in $\dot{H}^{k}$ and get:
\begin{align}\label{L2_D_k_partial_t_u}
\tfrac{1}{2}\partial_t\left\lbrace||\partial_t \textbf{u}||_{\dot{H}^{k}}^2+||u_2||_{\dot{H}^k}^2\right\rbrace=&-||\partial_ t \textbf{u}||_{\dot{H}^{k}}^2\\ \nonumber
&-(\partial^{k}\partial_t u_2,\partial^{k}(\textbf{u}\cdot\nabla\rho))-\langle\partial^{k}\partial_t \textbf{u},\partial^{k}\partial_t [\left(\textbf{u}\cdot\nabla\right) \textbf{u}]\rangle.
\end{align}
By putting together (\ref{L2_D_k_partial_t_u}) and (\ref{L2_partial_t_u}), we obtain (\ref{partial_t_u_H^k}). In consequence, we have proved our estimation.
\end{proof}

Next, we manipulate the quadratic terms of (\ref{E_k}) to be able to control the qubic ones. Our goal here is to use our velocity evolution equation to control the signed term $-\left[||\textbf{u}||_{H^k}^2+||\partial_t \textbf{u}||_{H^k}^2\right]$ by the following one, $-C\left[||\textbf{u}||_{H^k}^2+||-\nabla\Pi+(0,\bar{\rho})||_{H^k}^2+||\left(\textbf{u}\cdot\nabla\right) \textbf{u}||_{H^k}^2+||\partial_t \textbf{u}||_{H^k}^2\right]$ with $0<C<1$. To do this, we have to pay with a remainder, which we will be able to control for small data.\\

\noindent
More specifically, we can prove the following lemma, which is a key step in our proof.
\begin{lemma}\label{lineal_con_theta}
There exists $0<C<1$ such that:
\begin{align*}
-\left[||\mathbf{u}||_{H^{k}}^2+||\partial_t \mathbf{u}||_{H^k}^2\right]\leq  &-C\left[||\mathbf{u}||_{H^k}^2+||-\nabla\Pi+(0,\bar{\rho})||_{H^k}^2+||\left(\mathbf{u}\cdot\nabla\right) \mathbf{u}||_{H^k}^2+||\partial_t \mathbf{u}||_{H^k}^2\right]\\ \nonumber
&+\langle [\left(\mathbf{u}\cdot\nabla\right) \mathbf{u}],-\nabla\Pi+(0,\bar{\rho})\rangle-\,\langle \mathbf{u},\left(\mathbf{u}\cdot\nabla\right) \mathbf{u}\rangle\\ \nonumber
&+\langle\partial^k [\left(\mathbf{u}\cdot\nabla\right) \mathbf{u}],\partial^k[-\nabla\Pi+(0,\bar{\rho})]\rangle-\,\langle\partial^k \mathbf{u},\partial^k [\left(\mathbf{u}\cdot\nabla\right) \mathbf{u}]\rangle.
\end{align*}
\end{lemma}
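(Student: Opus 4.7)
The strategy is purely algebraic: I would exploit the velocity equation of \eqref{System_rho}, rewritten as
\[
\mathbf{u} + \partial_t \mathbf{u} + (\mathbf{u}\cdot\nabla)\mathbf{u} \;=\; -\nabla\Pi + (0,\bar\rho),
\]
which is a pointwise identity relating the four quantities whose norms appear in the claim. Setting $A := \mathbf{u}$, $B := \partial_t\mathbf{u}$, $N := (\mathbf{u}\cdot\nabla)\mathbf{u}$ and $F := -\nabla\Pi + (0,\bar\rho)$, this identity reads $A + B + N = F$, so the lemma becomes a statement about four linearly-dependent vectors in $L^2(\Omega)$ and in $\dot H^k(\Omega)$.

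For the $L^2$ component, the first step is to take the $L^2$ inner product of $A + B + N = F$ against $N$, which produces the algebraic relation
\[
\langle F, N\rangle - \langle A, N\rangle \;=\; \langle B, N\rangle + ||N||_{L^2}^2.
\]
Substituting this into the cross terms on the right-hand side of the claim, the $L^2$ part of the inequality reduces to showing
\[
||A||_{L^2}^2 + ||B||_{L^2}^2 + \langle B, N\rangle + ||N||_{L^2}^2 \;\geq\; C\bigl(||A||_{L^2}^2 + ||B||_{L^2}^2 + ||N||_{L^2}^2 + ||F||_{L^2}^2\bigr).
\]
At this point I would deploy two elementary ingredients. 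First, the triangle inequality applied to $F = A + B + N$ yields $||F||_{L^2}^2 \leq 3(||A||_{L^2}^2 + ||B||_{L^2}^2 + ||N||_{L^2}^2)$, which bounds the right-hand side by $4C$ times the sum of the three other squared $L^2$ norms. Second, Young's inequality gives $\langle B, N\rangle \geq -\tfrac12||B||_{L^2}^2 - \tfrac12||N||_{L^2}^2$, so the left-hand side is at least $\tfrac12(||A||_{L^2}^2 + ||B||_{L^2}^2 + ||N||_{L^2}^2)$. Any $C \in (0, 1/8]$ then closes the estimate.

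For the $\dot H^k$ component I would apply $\partial^k$ to the velocity equation, using that $\partial^k$ commutes with $\partial_t$, to obtain $\partial^k A + \partial^k B + \partial^k N = \partial^k F$, and then repeat verbatim the two-line argument above at the $\partial^k$ level. The cross terms produced in this way coincide exactly with the $\partial^k$ cross terms in the lemma, so summing the $L^2$ and $\dot H^k$ estimates and using $||\cdot||_{H^k}^2 = ||\cdot||_{L^2}^2 + ||\partial^k\cdot||_{L^2}^2$ delivers the statement with a universal $0 < C < 1$.

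I do not anticipate any real obstacle: the whole argument is Hilbert-space linear algebra built on top of the equation itself, and in particular no integration by parts is invoked, so no boundary conditions are needed. The only point requiring mild care is choosing the constant $C$ small enough (independent of $\mathbf{u}$, $\rho$, $\Pi$ and $k$) for the triangle and Young losses not to swallow the good signed terms on the left-hand side.
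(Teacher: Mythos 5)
Your proposal is correct. It rests on the same observation as the paper's proof — the velocity equation $\mathbf{u}+\partial_t\mathbf{u}+(\mathbf{u}\cdot\nabla)\mathbf{u}=-\nabla\Pi+(0,\bar\rho)$ is a pointwise linear relation among the four fields, so the lemma is pure Hilbert-space algebra plus Young's inequality, carried out separately at the $L^2$ and $\dot H^k$ levels and summed. The bookkeeping differs, though: the paper splits $-\|\partial_t\mathbf{u}\|_{H^k}^2$ in half, expands one half as $-\|{-\nabla\Pi+(0,\bar\rho)}-\mathbf{u}-(\mathbf{u}\cdot\nabla)\mathbf{u}\|_{H^k}^2$, and applies Young to the cross term $\langle\mathbf{u},-\nabla\Pi+(0,\bar\rho)\rangle$, leaving the two cross terms of the statement untouched; you instead pair the equation against $(\mathbf{u}\cdot\nabla)\mathbf{u}$ to convert those two cross terms into $\langle\partial_t\mathbf{u},(\mathbf{u}\cdot\nabla)\mathbf{u}\rangle+\|(\mathbf{u}\cdot\nabla)\mathbf{u}\|^2$, then close with the triangle inequality on $F=A+B+N$ and Young on $\langle B,N\rangle$. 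Both routes are two lines of algebra and need no integration by parts or boundary information; yours reads as a verification of the stated inequality (reducing it to an evidently true one with $C\le 1/8$), while the paper's derives it constructively and yields the admissible range of $C$ via the parameter $\epsilon\in(1/3,1)$. Neither buys anything substantive over the other.
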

\begin{proof}
\noindent
First of all, we use $\partial_t \textbf{u}=-\nabla\Pi+(0,\bar{\rho})- \textbf{u}-\left(\textbf{u}\cdot\nabla\right) \textbf{u},$ so we can rewrite:
\begin{align}\label{comer_lineal}
-||\partial_t \textbf{u}||_{H^k}^2=&-||-\nabla\Pi+(0,\bar{\rho})||_{H^k}^2-||\textbf{u}||_{H^k}^2-||\left(\textbf{u}\cdot\nabla\right) \textbf{u}||_{H^k}^2\\ \nonumber
&+2\,\langle\textbf{u},-\nabla\Pi+(0,\bar{\rho})\rangle+2\,\langle \left(\textbf{u}\cdot\nabla\right) \textbf{u},-\nabla\Pi+(0,\bar{\rho})\rangle-2\,\langle \textbf{u},\left(\textbf{u}\cdot\nabla\right) \textbf{u}\rangle\\ \nonumber
&+2\,\langle\partial^k \textbf{u},\partial^k[-\nabla\Pi+(0,\bar{\rho})]\rangle+2\,\langle\partial^k [\left(\textbf{u}\cdot\nabla\right) \textbf{u}],\partial^k[-\nabla\Pi+(0,\bar{\rho})]\rangle-2\,\langle\partial^k \textbf{u},\partial^k \left(\textbf{u}\cdot\nabla\right) \textbf{u}\rangle.
\end{align}
Then, we split the linear part as follows:
\begin{equation}\label{split_lineal}
-\left[||\textbf{u}||_{H^k}^2+||\partial_t \textbf{u}||_{H^k}^2\right]=-\left[||\textbf{u}||_{H^k}^2+\tfrac{1}{2}\,||\partial_t \textbf{u}||_{H^k}^2+\tfrac{1}{2}\, ||\partial_t \textbf{u}||_{H^k}^2\right]
\end{equation}
and combining equation (\ref{comer_lineal}) with (\ref{split_lineal}) in an adequate way, we get:
\begin{align}
-\left[||\textbf{u}||_{H^k}^2+||\partial_t \textbf{u}||_{H^k}^2\right]=&-\tfrac{3}{2}\,||\textbf{u}||_{H^k}^2-\tfrac{1}{2}\,||-\nabla\Pi+(0,\bar{\rho})||_{H^k}^2-\tfrac{1}{2}\, ||\left(\textbf{u}\cdot\nabla\right) \textbf{u}||_{H^k}^2-\tfrac{1}{2}\, ||\partial_t \textbf{u}||_{H^k}^2\\ \nonumber
&+\,\langle\textbf{u},-\nabla\Pi+(0,\bar{\rho})\rangle+\,\langle \left(\textbf{u}\cdot\nabla\right) \textbf{u},-\nabla\Pi+(0,\bar{\rho})\rangle-\,\langle \textbf{u},\left(\textbf{u}\cdot\nabla\right) \textbf{u}\rangle\\ \nonumber
&+\,\langle\partial^k \textbf{u},\partial^k[-\nabla\Pi+(0,\bar{\rho})]\rangle+\,\langle\partial^k [\left(\textbf{u}\cdot\nabla\right) \textbf{u}],\partial^k[-\nabla\Pi+(0,\bar{\rho})]\rangle\\ \nonumber
&-\,\langle\partial^k \textbf{u},\partial^k \left(\textbf{u}\cdot\nabla\right) \textbf{u}\rangle.
\end{align}
By Young's inequality it is clear that there exists $0<\epsilon<1$ such that:
$$\langle\textbf{u},-\nabla\Pi+(0,\bar{\rho})\rangle+\langle\partial^k \textbf{u},\partial^k[-\nabla\Pi+(0,\bar{\rho})]\rangle\leq \frac{1}{2}\, \left(\frac{||\textbf{u}||_{H^k}^2}{\epsilon}+\epsilon\,||-\nabla\Pi+(0,\bar{\rho})||_{H^k}^2\right).$$
Combining this with the above estimate when $1/3<\epsilon<1$ yields our lemma.
\end{proof}

Now, we combine (\ref{E_k}) and Lemma (\ref{lineal_con_theta}) to get:
\begin{align}\label{Ek_I1_I5}
\partial_t E_k(t)\leq&-C\left[||\mathbf{u}||_{H^k}^2+||-\nabla\Pi+(0,\bar{\rho})||_{H^k}^2+||\left(\mathbf{u}\cdot\nabla\right) \mathbf{u}||_{H^k}^2+||\partial_t \mathbf{u}||_{H^k}^2\right]\\
&+I^1+I^2+I^3+I^4+I^5 \nonumber
\end{align}
\text{with} 
\begin{align*}
I^1:&=-(\partial_t u_2,\mathbf{u}\cdot\nabla\rho)-(\partial^{k}\partial_t u_2,\partial^{k}(\mathbf{u}\cdot\nabla\rho)),\\
I^2:&=-\langle\partial_t \mathbf{u},\partial_t [\left(\mathbf{u}\cdot\nabla\right) \mathbf{u}]\rangle-\langle\partial^{k}\partial_t \mathbf{u},\partial^{k}\partial_t [\left(\mathbf{u}\cdot\nabla\right) \mathbf{u}]\rangle,\\
I^3:&=-2\,\langle \mathbf{u},\left(\mathbf{u}\cdot\nabla\right) \mathbf{u}\rangle-2\,\langle \partial^{k}\mathbf{u},\partial^k\,\left[\left(\mathbf{u}\cdot\nabla \right) \mathbf{u}\right]\rangle,\\
I^4:&=-\langle\left(\mathbf{u}\cdot\nabla\right) \mathbf{u},\nabla\Pi-(0,\bar{\rho})\rangle
-\,\langle\partial^k [\left(\mathbf{u}\cdot\nabla\right) \mathbf{u}],\partial^k[\nabla\Pi-(0,\bar{\rho})]\rangle,\\
I^5:&=-(\mathbf{u}\cdot\nabla\rho,\rho)-(\partial^{k}(\mathbf{u}\cdot\nabla\rho),\partial^{k}\rho).
\end{align*}

\subsubsection{$\mathcal{\dot{E}}_{k+1}(t)$ Energy Estimate}
To do this we use the system (\ref{System_rho_tilde_barra}). We start proving the following statement.
\begin{lemma}\label{estimate_E_k+1_tilde}
The next estimate holds:
\begin{align*}
\partial_t \mathcal{\dot{E}}_{k+1}(t)=&-
 \int_{\Omega} |\partial^{k+1}\mathbf{u}|^2\,(1+\partial_y\tilde{\rho})\, dxdy\\
 &-\int_{\Omega}\partial^{k+1}\mathbf{u}\cdot \partial^{k+1}\left[\left(\mathbf{u}\cdot\nabla \right)\mathbf{u}\right]\, (1+\partial_y\tilde{\rho})\,dxdy\\
&+\int_{\Omega}\partial^{k+1}u_2\, \partial^{k+1}\Pi\, \partial_y^2\tilde{\rho}\, dxdy\\
&-\int_{\Omega}\partial^{k+1}\left(\mathbf{u}\cdot\nabla\bar{\rho}\right)\, \partial^{k+1}\bar{\rho}\,dxdy\\
&+\int_{\Omega} \left((1+\partial_y\tilde{\rho})\,\partial^{k+1}u_2-\partial^{k+1}\left((1+\partial_y\tilde{\rho})\,u_2\right)\right)\, \partial^{k+1}\bar{\rho}\,dxdy\\
&+\tfrac{1}{2}\int_{\Omega} |\partial^{k+1}\mathbf{u}|^2 \,\partial_t \partial_y\tilde{\rho}\,dxdy\\
&+\int_{\Omega} \partial^{k+2}\tilde{\rho}\, \partial^{k+1}\left(u_2 \,\overline{\rho}\right)\,dxdy.
\end{align*}
\end{lemma}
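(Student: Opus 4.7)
The plan is to differentiate the two pieces of $\mathcal{\dot{E}}_{k+1}(t)=\tfrac{1}{2}||\rho||_{\dot{H}^{k+1}}^{2}+\tfrac{1}{2}\int_{\Omega}|\partial^{k+1}\mathbf{u}|^{2}(1+\partial_{y}\tilde\rho)\,dxdy$ separately, handling the weighted kinetic part via the momentum equation of \eqref{System_rho} and the potential part via the decomposed system \eqref{System_rho_tilde_barra}. A key preliminary observation is that, by Fubini, $\tilde\rho$ (a function of $y$ only) and $\bar\rho$ (of zero horizontal average) are orthogonal in every $\dot{H}^{k+1}$ inner product, so $\tfrac{1}{2}\partial_{t}||\rho||_{\dot{H}^{k+1}}^{2}=\tfrac{1}{2}\partial_{t}||\bar\rho||_{\dot{H}^{k+1}}^{2}+\tfrac{1}{2}\partial_{t}||\tilde\rho||_{\dot{H}^{k+1}}^{2}$, and in the last summand only the all-$y$ multi-index $\partial_{y}^{k+1}\tilde\rho$ contributes. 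A parallel orthogonality argument shows that, when pairing $\partial^{k+1}\bar\rho$ against $\partial^{k+1}\overline{\mathbf{u}\cdot\nabla\bar\rho}$, the subtracted horizontal average is harmless, so I may replace $\overline{\mathbf{u}\cdot\nabla\bar\rho}$ by $\mathbf{u}\cdot\nabla\bar\rho$ in that pairing.

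For the kinetic piece I differentiate the weight explicitly, which immediately yields the term $\tfrac{1}{2}\int|\partial^{k+1}\mathbf{u}|^{2}\,\partial_{t}\partial_{y}\tilde\rho\,dxdy$ (the sixth term in the statement), and substitute $\partial_{t}\mathbf{u}=-\mathbf{u}-(\mathbf{u}\cdot\nabla)\mathbf{u}-\nabla\Pi+(0,\bar\rho)$ into the remaining pairing $\int\partial^{k+1}\mathbf{u}\cdot\partial^{k+1}\partial_{t}\mathbf{u}\,(1+\partial_{y}\tilde\rho)$. This directly produces the dissipation, the transport remainder, the gravity coupling $\int\partial^{k+1}u_{2}\,\partial^{k+1}\bar\rho\,(1+\partial_{y}\tilde\rho)$, and a pressure integral $-\int\partial^{k+1}\mathbf{u}\cdot\partial^{k+1}\nabla\Pi\,(1+\partial_{y}\tilde\rho)$. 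For the last, I integrate $\partial_{x}$ by parts in $x$ on the $u_{1}$-summand and $\partial_{y}$ by parts in $y$ on the $u_{2}$-summand; since $1+\partial_{y}\tilde\rho$ depends only on $y$, the bulk terms sum to $\int\partial^{k+1}(\nabla\cdot\mathbf{u})\,\partial^{k+1}\Pi\,(1+\partial_{y}\tilde\rho)$, which vanishes by incompressibility, while the $y$-integration by parts also picks up a weight-derivative contribution $\int\partial^{k+1}u_{2}\,\partial^{k+1}\Pi\,\partial_{y}^{2}\tilde\rho\,dxdy$, exactly the third term of the statement.

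For the potential piece I substitute the equations of \eqref{System_rho_tilde_barra}. The $\bar\rho$-equation contributes $-\int\partial^{k+1}\bar\rho\,\partial^{k+1}\bigl((1+\partial_{y}\tilde\rho)u_{2}\bigr)\,dxdy-\int\partial^{k+1}\bar\rho\,\partial^{k+1}\overline{\mathbf{u}\cdot\nabla\bar\rho}\,dxdy$. Adding and subtracting $(1+\partial_{y}\tilde\rho)\partial^{k+1}u_{2}$ in the first summand splits it into a piece that exactly cancels the gravity coupling carried over from the kinetic piece, together with the commutator remainder appearing as the fifth term of the statement; the orthogonality observation above identifies the second summand with $-\int\partial^{k+1}(\mathbf{u}\cdot\nabla\bar\rho)\,\partial^{k+1}\bar\rho\,dxdy$, the fourth term. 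For the $\tilde\rho$-part I use $\partial_{t}\tilde\rho=-\widetilde{\mathbf{u}\cdot\nabla\bar\rho}=-\partial_{y}\widetilde{u_{2}\bar\rho}$, the last identity following from $\nabla\cdot\mathbf{u}=0$ and periodicity in $x$; then one integration by parts in $y$ rewrites $-\int\partial_{y}^{k+1}\tilde\rho\,\partial_{y}^{k+2}\widetilde{u_{2}\bar\rho}$ as $\int\partial^{k+2}\tilde\rho\,\partial^{k+1}(u_{2}\bar\rho)\,dxdy$, the last term of the statement.

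The main obstacle I anticipate is verifying that every boundary trace produced by a $y$-integration by parts actually vanishes. The most delicate is $\sum_{|\alpha|=k+1}\int_{\mathbb{T}}[\partial^{\alpha}u_{2}\,\partial^{\alpha}\Pi\,(1+\partial_{y}\tilde\rho)]_{y=\pm 1}\,dx$, generated when moving $\partial_{y}$ off $\Pi$; it is handled by a parity argument tied to the class $\mathbb{X}^{k+1}$: when $\alpha_{y}$ is even, $\partial^{\alpha}u_{2}|_{\partial\Omega}=0$ by definition of $X^{k+1}$, and when $\alpha_{y}$ is odd, one uses that the pressure inherits Neumann-type vanishing $\partial_{y}^{2j+1}\Pi|_{\partial\Omega}=0$, the base case $\partial_{y}\Pi|_{\partial\Omega}=0$ being a direct consequence of evaluating the momentum equation at the wall (where $u_{2}$, $\bar\rho$, $\partial_{t}u_{2}$ and $(\mathbf{u}\cdot\nabla)u_{2}$ all vanish) and the higher odd cases following by differentiating the Poisson identity $-\Delta\Pi=\partial_{i}u_{j}\partial_{j}u_{i}-\partial_{y}\bar\rho$. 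The analogous trace $[\partial_{y}^{k+1}\tilde\rho\,\partial_{y}^{k+1}\widetilde{u_{2}\bar\rho}]_{y=\pm 1}$ in the $\tilde\rho$-computation vanishes because $u_{2}|_{\partial\Omega}=0$ and $\bar\rho|_{\partial\Omega}=0$ propagate, through the same parity bookkeeping enforced by $X^{k+1}$, to make the required $y$-derivatives of $\widetilde{u_{2}\bar\rho}$ vanish at $y=\pm 1$. Once these traces are disposed of, collecting the surviving bulk contributions yields exactly the seven terms stated.
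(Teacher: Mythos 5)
Your proposal is correct and follows essentially the same route as the paper: differentiate the weighted kinetic term and the $\dot H^{k+1}$ norm of $\rho$ separately, substitute the momentum equation, kill the pressure bulk term by incompressibility while harvesting the $\partial_y^2\tilde\rho$ weight-derivative contribution, add and subtract $(1+\partial_y\tilde\rho)\partial^{k+1}u_2$ to cancel the gravity coupling against the $\bar\rho$-equation, and handle $\tilde\rho$ via $\widetilde{\mathbf{u}\cdot\nabla\bar\rho}=\partial_y\widetilde{(u_2\bar\rho)}$ and one integration by parts. Your parity argument for the vanishing boundary traces (even $y$-derivatives of $u_2$, odd $y$-derivatives of $\Pi$) is exactly the paper's assertion that $u_2\in X^{k+1}(\Omega)$ and $\Pi\in Y^{k+1}(\Omega)$, only spelled out in more detail.
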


\noindent
\begin{proof}
First of all, we start with the weighted term of $\mathcal{\dot{E}}_{k+1}(t)$. The estimation of such a term requires a long splitting into several controlled terms.
\begin{align*}
\tfrac{1}{2}\,\partial_t \int_{\Omega} |\partial^{k+1}\textbf{u}|^2\,(1+\partial_y\tilde{\rho})\, dxdy &=\int_{\Omega} \partial^{k+1}\textbf{u} \cdot\partial^{k+1}\partial_t \textbf{u}\,(1+\partial_y\tilde{\rho}) \,dxdy +\tfrac{1}{2}\int_{\Omega} |\partial^{k+1}\textbf{u}|^2 \,\partial_t\partial_y\tilde{\rho} \,dxdy
\end{align*}
As $\partial_t \textbf{u}+\textbf{u}+\left(\textbf{u}\cdot\nabla\right)\textbf{u}=-\nabla\Pi+(0,\bar{\rho})$ we obtain that:
\begin{align*}
\tfrac{1}{2}\,\partial_t \int_{\Omega} |\partial^{k+1}\textbf{u}|^2\,(1+\partial_y\tilde{\rho})\, dxdy=&-
 \int_{\Omega} |\partial^{k+1}\textbf{u}|^2\,(1+\partial_y\tilde{\rho})\, dxdy\\
 &-\int_{\Omega}\partial^{k+1}\textbf{u}\cdot \partial^{k+1}\left[\left(\textbf{u}\cdot\nabla \right)\textbf{u}\right]\, (1+\partial_y\tilde{\rho})\,dxdy\\
&-\int_{\Omega}\partial^{k+1}\textbf{u}\cdot \partial^{k+1}\nabla\Pi\,(1+\partial_y\tilde{\rho})\,dxdy\\
&+\int_{\Omega}\partial^{k+1}u_2\, \partial^{k+1}\bar{\rho}\, (1+\partial_y\tilde{\rho})\,dxdy\\
&+\tfrac{1}{2}\int_{\Omega} |\partial^{k+1}\textbf{u}|^2 \,\partial_t \partial_y\tilde{\rho}\,dxdy.
\end{align*}
Since $\nabla\cdot\mathbf{u}=0$ in $\Omega$ and $\mathbf{u}\cdot \mathbf{n}=0$ on $\partial\Omega$, using integration by parts in the third term gives:
\begin{align}\label{3-term}
-\int_{\Omega}\partial^{k+1}\textbf{u}\cdot \partial^{k+1}\nabla\Pi\,(1+\partial_y\tilde{\rho})\,dxdy&=\int_{\Omega}\partial^{k+1}u_2\, \partial^{k+1}\Pi\, \partial_y^2\tilde{\rho}\, dxdy-\int_{\Omega}\partial_y\left[\partial^{k+1}u_2 \partial^{k+1}\Pi\,(1+\partial_y\tilde{\rho})\right]\,dxdy \nonumber\\
&=\int_{\Omega}\partial^{k+1}u_2\, \partial^{k+1}\Pi\, \partial_y^2\tilde{\rho}\, dxdy.
\end{align}
By the periodicity in the horizontal variable, it is clear that the only boundary term that needs to be study carefully is the associated with the vertical variable, which vanish because $u_2\in X^{k+1}(\Omega)$ and $\Pi\in Y^{k+1}(\Omega)$.
Now, we focus in the fourth term, which can be written as:
\begin{align*}
\int_{\Omega}\partial^{k+1}u_2\, \partial^{k+1}\bar{\rho}\, (1+\partial_y\tilde{\rho})\,dxdy=&\int_{\Omega}\partial^{k+1}\left((1+\partial_y\tilde{\rho})\,u_2\right)\, \partial^{k+1}\bar{\rho}\,dxdy\\
&+\int_{\Omega} \left((1+\partial_y\tilde{\rho})\,\partial^{k+1}u_2-\partial^{k+1}\left((1+\partial_y\tilde{\rho})\,u_2\right)\right)\, \partial^{k+1}\bar{\rho}\,dxdy
\end{align*}
and, as $\partial_t\bar{\rho}+\overline{\textbf{u}\cdot\nabla\bar{\rho}}= -(1+\partial_y\tilde{\rho})\,u_2$ we get:
\begin{align}\label{4-term}
\int_{\Omega}\partial^{k+1}u_2\, \partial^{k+1}\bar{\rho}\, (1+\partial_y\tilde{\rho})\,dxdy&=-\tfrac{1}{2}\partial_t\int_{\Omega}|\partial^{k+1}\bar{\rho}|^2\,dxdy-\int_{\Omega}\partial^{k+1}\left(\textbf{u}\cdot\nabla\bar{\rho}\right)\, \partial^{k+1}\bar{\rho}\,dxdy\nonumber \\
&+\int_{\Omega} \left((1+\partial_y\tilde{\rho})\,\partial^{k+1}u_2-\partial^{k+1}\left((1+\partial_y\tilde{\rho})\,u_2\right)\right)\, \partial^{k+1}\bar{\rho}\,dxdy 
\end{align}
where in the second integral, we have used $\widetilde{\textbf{u}\cdot\nabla\bar{\rho}}\perp \bar{\rho}$. Therefore, putting (\ref{3-term}) and (\ref{4-term}) together, we obtain:
\begin{align}\label{falta_tilde}
\tfrac{1}{2}\partial_t\left\lbrace ||\bar{\rho}||_{\dot{H}^{k+1}}^2 +\int_{\Omega} |\partial^{k+1}\textbf{u}|^2\,(1+\partial_y\tilde{\rho})\, dxdy\right\rbrace=&-
 \int_{\Omega} |\partial^{k+1}\textbf{u}|^2\,(1+\partial_y\tilde{\rho})\, dxdy \nonumber\\
 &-\int_{\Omega}\partial^{k+1}\textbf{u}\cdot \partial^{k+1}\left[\left(\textbf{u}\cdot\nabla \right)\textbf{u}\right]\, (1+\partial_y\tilde{\rho})\,dxdy \nonumber \\
&+\int_{\Omega}\partial^{k+1}u_2\, \partial^{k+1}\Pi\, \partial_y^2\tilde{\rho}\, dxdy \nonumber\\
&-\int_{\Omega}\partial^{k+1}\left(\textbf{u}\cdot\nabla\bar{\rho}\right)\, \partial^{k+1}\bar{\rho}\,dxdy \nonumber\\
&+\int_{\Omega} \left((1+\partial_y\tilde{\rho})\,\partial^{k+1}u_2-\partial^{k+1}\left((1+\partial_y\tilde{\rho})\,u_2\right)\right)\, \partial^{k+1}\bar{\rho}\,dxdy \nonumber \\
&+\tfrac{1}{2}\int_{\Omega} |\partial^{k+1}\textbf{u}|^2 \,\partial_t \partial_y\tilde{\rho}\,dxdy.
\end{align}

\noindent
To prove the desired inequality we need to study the evolution in time of $\tilde{\rho}$. Note that $\tilde{\rho}(y,t)$ doesn't depend on the horizontal variable. And by the orthogonality, as $\bar{\rho}\perp\tilde{\rho}$ it is clear that:
$$||\rho||_{\dot{H}^{k+1}(\Omega)}^2=||\bar{\rho}||_{\dot{H}^{k+1}(\Omega)}^2+2\pi\,||\tilde{\rho}||_{\dot{H}^{k+1}([-1,1])}^2.$$
\noindent
As $\nabla\cdot \textbf{u}=0$, it is simple to see that $\widetilde{\textbf{u}\cdot\nabla\bar{\rho}}= \partial_y\widetilde{\left(u_2 \,\overline{\rho}\right)}$, and by integration by parts we get:
\begin{align*}
\tfrac{1}{2}\partial_t \int_{-1}^{1} |\partial_y^{k+1}\tilde{\rho}|^2\,dy&=\int_{-1}^{1} \partial_y^{k+1}\tilde{\rho}\, \partial_y^{k+1}\partial_t\tilde{\rho}\,dy=-\int_{-1}^{1} \partial_y^{k+1}\tilde{\rho}\, \partial_y^{k+1}(\widetilde{\textbf{u}\cdot\nabla\bar{\rho}})\,dy\\
&=\int_{-1}^{1} \partial_y^{k+2}\tilde{\rho}\, \partial_y^{k+1}\widetilde{\left(u_2 \,\bar{\rho}\right)}\,dy- \partial_y^{k+1}\tilde{\rho}\, \partial_y^{k+1}\widetilde{\left(u_2 \,\bar{\rho}\right)}\Big|_{y=-1}^{y=1}\\
&=\int_{-1}^{1} \partial_y^{k+2}\tilde{\rho}\, \partial_y^{k+1}\widetilde{\left(u_2 \,\bar{\rho}\right)}\,dy
\end{align*}
where, in the last step, we have used that $\tilde{\rho}\in X^k([-1,1])$ and $\widetilde{u_2\,\bar{\rho}}\in Y^k([-1,1])$. Hence, the boundary term is equal to zero because at least one term that contains vanish. As  $\overline{\left( u_2\,\bar{\rho}\right)}\perp \tilde{\rho}$ we have proved that: 
\begin{equation}\label{tilde_rho_H^{k+1}}
2\pi\,\partial_t||\tilde{\rho}||_{H^{k+1}([-1,1])}^2=\tfrac{1}{2}\partial_t \int_{\mathbb{T}}\int_{-1}^{1} |\partial_y^{k+1}\tilde{\rho}|^2\,dxdy=\int_{\Omega} \partial_y^{k+2}\tilde{\rho}\, \partial_y^{k+1}\left(u_2 \,\overline{\rho}\right)\,dxdy.
\end{equation}

\noindent
If we put (\ref{tilde_rho_H^{k+1}}) in (\ref{falta_tilde}) we obtain the claimed bound.
\end{proof}

Combining the estimates for $E_{k}(t)$ and $\mathcal{\dot{E}}_{k+1}(t)$ given by (\ref{Ek_I1_I5}) and Lemma (\ref{estimate_E_k+1_tilde}), we have proved that there exists $0<C<1$ such that:
\begin{align*}
\partial_t\mathfrak{E}_{k+1}(t)\leq &-C\left[||\mathbf{u}||_{H^k}^2+||\nabla\Pi-(0,\bar{\rho})||_{H^k}^2+||\left(\mathbf{u}\cdot\nabla\right) \mathbf{u}||_{H^k}^2+||\partial_t \mathbf{u}||_{H^k}^2\right]-
 \int_{\Omega} |\partial^{k+1}\mathbf{u}|^2\,(1+\partial_y\tilde{\rho})\, dxdy\\
&+I^1+I^2+I^3+I^4+I^5+I^6+I^7+I^8+I^9
\end{align*}
\text{with} 
\begin{align*}
I^1:&=-(\partial_t u_2,\mathbf{u}\cdot\nabla\rho)-(\partial^{k}\partial_t u_2,\partial^{k}(\mathbf{u}\cdot\nabla\rho)),\\
I^2:&=-\langle\partial_t \mathbf{u},\partial_t [\left(\mathbf{u}\cdot\nabla\right) \mathbf{u}]\rangle-\langle\partial^{k}\partial_t \mathbf{u},\partial^{k}\partial_t [\left(\mathbf{u}\cdot\nabla\right) \mathbf{u}]\rangle,\\
I^3:&=-2\,\langle \mathbf{u},\left(\mathbf{u}\cdot\nabla\right) \mathbf{u}\rangle-2\,\langle \partial^{k}\mathbf{u},\partial^k\,\left[\left(\mathbf{u}\cdot\nabla \right) \mathbf{u}\right]\rangle,\\
I^4:&=-\langle\left(\mathbf{u}\cdot\nabla\right) \mathbf{u},\nabla\Pi-(0,\bar{\rho})\rangle
-\,\langle\partial^k [\left(\mathbf{u}\cdot\nabla\right) \mathbf{u}],\partial^k[\nabla\Pi-(0,\bar{\rho})]\rangle,\\
I^5:&=-(\mathbf{u}\cdot\nabla\rho,\rho)-(\partial^{k}(\mathbf{u}\cdot\nabla\rho),\partial^{k}\rho),\\
I^6:&=\int_{\Omega} \left((1+\partial_y\tilde{\rho})\,\partial^{k+1}u_2-\partial^{k+1}\left((1+\partial_y\tilde{\rho})\,u_2\right)\right)\, \partial^{k+1}\bar{\rho}\,dxdy+\int_{\Omega} \partial^{k+2}\tilde{\rho}\, \partial^{k+1}\left(u_2 \,\bar{\rho}\right)\,dxdy,\\
I^7:&=\int_{\Omega}\partial^{k+1}u_2\, \partial^{k+1}\Pi\, \partial_y^2\tilde{\rho}\, dxdy+\tfrac{1}{2}\int_{\Omega} |\partial^{k+1}\mathbf{u}|^2 \,\partial_t \partial_y\tilde{\rho}\,dxdy,\\
I^8:&=-\int_{\Omega}\partial^{k+1}\mathbf{u}\cdot \partial^{k+1}\left[\left(\mathbf{u}\cdot\nabla \right)\mathbf{u}\right]\, (1+\partial_y\tilde{\rho})\,dxdy,\\
I^9:&=-\int_{\Omega}\partial^{k+1}\left(\mathbf{u}\cdot\nabla\bar{\rho}\right)\, \partial^{k+1}\bar{\rho}\,dxdy.
\end{align*}

\noindent
Before moving on to study each term  $\{I^m\}_{m=1}^{9}$ separately, we make the following simple observations:
\begin{enumerate}
\item Let $f\in L^2(\mathbb{T})$ with zero average. Then, we have thta: 
\begin{equation}\label{f_dxf}
||f||_{L^2(\mathbb{T})}\leq ||\partial_x f||_{L^2(\mathbb{T})}
\end{equation}
\begin{proof}
The proof is an immediate consequence of Plancherel's theorem. As $f$ has zero average, in the \textit{Fourier side}, this means that $\hat{f}(0)=0$. Then
$$||f||_{L^2(\mathbb{T})}^2=\sum_{k\in \mathbb{Z}\neq 0}|\hat{f}(k)|^2\leq \sum_{k\in \mathbb{Z}\neq 0}|(ik)\hat{f}(k)|^2=||\partial_x f||_{L^2(\mathbb{T})}^2.$$
\end{proof}

\item  As $\bar{\rho}:=\rho-\tilde{\rho}$ has zero average in the horizontal variable, for $n\in \N\cup\{0\}$ we get: 
\begin{equation}\label{rho_R1rho}
||\bar{\rho}||_{H^n(\Omega)}\leq ||\nabla\Pi-(0,\bar{\rho})||_{H^{n+1}(\Omega)}
\end{equation}
\begin{proof}
For simplicity, we do the computation in $L^2(\Omega)\equiv H^{0}(\Omega)$, but the same argument can be repeat in $H^n(\Omega)$ with $n\in\N$. Now, by (\ref{f_dxf}) we obtain that $||\bar{\rho}||_{L^2(\Omega)}\leq ||\partial_x \bar{\rho}||_{L^2(\Omega)}$ and consequently we get:
\begin{align*}
||\bar{\rho}||_{L^2(\Omega)}&\leq ||\partial_x \bar{\rho}||_{L^2(\Omega)}=||\partial_x\left(\bar{\rho}\pm \partial_y\Pi\right)||_{L^2(\Omega)}\leq ||\partial_x(\bar{\rho}-\partial_y\Pi)||_{L^2(\Omega)}+||\partial_y\partial_x\Pi||_{L^2(\Omega)}\\
&\leq ||\bar{\rho}-\partial_y\Pi||_{H^1(\Omega)}+||\partial_x\Pi||_{H^1(\Omega)}=||\nabla\Pi-(0,\bar{\rho})||_{H^1(\Omega)}.
\end{align*}
\end{proof}

\item The second component of the velocity $u_2(t)$ has zero average in the horizontal variable.
This is:
\begin{equation}\label{u2_zero_average}
u_2(t)=\bar{u}_2(t) \quad \text{or} \quad \tilde{u}_2(t)=0.
\end{equation}
\begin{proof}
By the periodicity in the horizontal variable and the incompressibility of the velocity, we get:
\begin{equation*}
0=\int_{\mathbb{T}}\left(\nabla\cdot\mathbf{u}\right)(x',y,t)\,dx'=\partial_y \tilde{u}_2(y,t)\quad \Longrightarrow \quad\tilde{u}_2(y,t)=\beta(t).
\end{equation*}
Moreover, by  the no-slip condition, we have $\tilde{u}_2(t)|_{\partial\Omega}=0$ and in consequence $\beta(t)=0$.
\end{proof}
\end{enumerate}
With all these tools in mind, it is time to prove:
\begin{lemma}
The following estimates holds for $k\geq 5$:
\begin{enumerate}
	\item $I^1\lesssim \left(||\partial_t \mathbf{u}||_{H^k}^2+ ||\mathbf{u}||_{H^k}^2\right) ||\rho||_{H^{k+1}}$
	
	\item $I^2\lesssim ||\partial_t \mathbf{u}||_{H^k}^2 ||\mathbf{u}||_{H^{k+1}}$
	
	\item $I^3\lesssim|| \mathbf{u}||_{H^k}^3$
	
	\item $I^4\lesssim \left(||\nabla\Pi-(0,\bar{\rho})||_{H^k}^2+||\mathbf{u}||_{H^k}^2\right)||\mathbf{u}||_{H^{k+1}}$
	
	\item $I^5\lesssim ||\rho||_{H^{k+1}}\left( ||\mathbf{u}||_{H^k}^2+ ||\nabla\Pi-(0,\bar{\rho})||_{H^k}^2\right)+||\nabla\Pi-(0,\bar{\rho})||_{H^k}||\mathbf{u}||_{H^{k+1}} ||\rho||_{H^k}$
\end{enumerate}
\end{lemma}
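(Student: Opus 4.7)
The plan is to handle each of the five terms by a common recipe: apply Cauchy--Schwarz to separate the inner product, bound the resulting Sobolev norm of the nonlinearity via the Moser product rule \eqref{product_rule} or the commutator estimate \eqref{Commutator}, kill any leading-order transport contribution of the form $\int_\Omega (\mathbf{u}\cdot\nabla v)\,v\,dxdy$ using $\nabla\cdot\mathbf{u}=0$ together with $\mathbf{u}\cdot\mathbf{n}|_{\partial\Omega}=0$, convert $L^\infty$ factors to Sobolev factors via the 2D embeddings $H^2\hookrightarrow L^\infty$ and $H^3\hookrightarrow W^{1,\infty}$ (valid since $k\geq 5$), and repackage cross terms by Young's inequality. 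Whenever $\bar\rho$ appears at order $\leq k-1$, I will trade it for $\|\nabla\Pi-(0,\bar\rho)\|_{H^k}$ via \eqref{rho_R1rho}.

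The first four estimates are routine instances of this template. For $I^1$ and $I^4$, Cauchy--Schwarz combined with \eqref{product_rule} applied to $\mathbf{u}\cdot\nabla\rho$ and to $(\mathbf{u}\cdot\nabla)\mathbf{u}$ respectively produces the bound, and Young redistributes the resulting cross products into the stated form. For $I^3$, the $L^2$ piece vanishes directly, and at the top order we split $\partial^k[(\mathbf{u}\cdot\nabla)\mathbf{u}]=(\mathbf{u}\cdot\nabla)\partial^k\mathbf{u}+[\partial^k,\mathbf{u}\cdot\nabla]\mathbf{u}$: the transport term integrates away against $\partial^k\mathbf{u}$, and \eqref{Commutator} bounds the commutator by $\|\mathbf{u}\|_{H^k}^2$. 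For $I^2$, I first expand $\partial_t[(\mathbf{u}\cdot\nabla)\mathbf{u}]=(\partial_t\mathbf{u}\cdot\nabla)\mathbf{u}+(\mathbf{u}\cdot\nabla)\partial_t\mathbf{u}$; the first summand is handled by \eqref{product_rule} to produce $\|\partial_t\mathbf{u}\|_{H^k}\|\mathbf{u}\|_{H^{k+1}}$ after Cauchy--Schwarz, and for the second I again use the commutator trick to annihilate the $(\mathbf{u}\cdot\nabla)\partial^k\partial_t\mathbf{u}$ piece against $\partial^k\partial_t\mathbf{u}$, leaving a commutator estimated by $\|\mathbf{u}\|_{H^k}\|\partial_t\mathbf{u}\|_{H^k}$ from \eqref{Commutator}.

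The delicate one is $I^5$. After eliminating the $L^2$ piece and reducing to $(\partial^k(\mathbf{u}\cdot\nabla\rho),\partial^k\rho)=([\partial^k,\mathbf{u}\cdot\nabla]\rho,\partial^k\rho)$, a plain application of \eqref{Commutator} produces only $\|\mathbf{u}\|_{H^k}\|\rho\|_{H^k}^2$, which is cubic but not of the target form because $\|\rho\|_{H^k}^2$ contains a $\|\tilde\rho\|_{H^k}^2$ contribution that \eqref{rho_R1rho} cannot absorb into $\|\nabla\Pi-(0,\bar\rho)\|_{H^k}^2$. To recover the finer structure, I split $\partial^k\rho=\partial^k\bar\rho+\partial^k\tilde\rho$ on the right factor. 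On the $\bar\rho$ side, a careful distribution of the derivatives in the commutator keeps $\bar\rho$ at order at most $k-1$; \eqref{rho_R1rho} then converts it into $\|\nabla\Pi-(0,\bar\rho)\|_{H^k}$, and Young's inequality produces the $\|\rho\|_{H^{k+1}}\|\nabla\Pi-(0,\bar\rho)\|_{H^k}^2$ and $\|\nabla\Pi-(0,\bar\rho)\|_{H^k}\|\mathbf{u}\|_{H^{k+1}}\|\rho\|_{H^k}$ contributions. On the $\tilde\rho$ side, I exploit that $\tilde\rho=\tilde\rho(y)$ and $\tilde u_2=0$, so that $\widetilde{\mathbf{u}\cdot\nabla\rho}=\partial_y\widetilde{u_2\bar\rho}$, and integrate by parts in $y$ to extract the $\|\rho\|_{H^{k+1}}\|\mathbf{u}\|_{H^k}^2$-type piece.

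The main obstacle is precisely this bookkeeping in $I^5$: the target bound distinguishes $\|\rho\|_{H^k}$ from $\|\rho\|_{H^{k+1}}$ and the $\bar\rho$-content (controlled through $\|\nabla\Pi-(0,\bar\rho)\|_{H^k}$ via \eqref{rho_R1rho}) from the $\tilde\rho$-content (not controlled by the weighted pressure at all), so an off-the-shelf commutator estimate is not sharp enough. The $\bar\rho/\tilde\rho$ decomposition combined with a careful count of derivative orders is what places each piece in the correct slot of the claimed inequality; this is also the reason why the hypothesis $k\geq 5$ is comfortably sufficient for every Sobolev embedding invoked above.
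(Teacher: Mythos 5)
Your treatment of $I^1$--$I^4$ matches the paper's proof (the paper phrases everything through the commutator estimate \eqref{Commutator} plus the cancellation of $(\mathbf{u}\cdot\nabla v,v)$ under incompressibility, while you occasionally invoke the product rule \eqref{product_rule} directly; the bounds and the hypotheses used are the same). For $I^5$ your organization is also essentially the paper's --- a Leibniz/commutator expansion, a $\bar\rho$ versus $\tilde\rho$ splitting, the inequality \eqref{rho_R1rho}, the fact that $\tilde u_2=0$, and integration by parts --- except that you split the test function $\partial^k\rho=\partial^k\bar\rho+\partial^k\tilde\rho$ whereas the paper splits the transported gradient $\nabla\rho=\nabla\bar\rho+(0,\partial_y\tilde\rho)$; this is only a reorganization of the same bookkeeping.

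The one step that does not survive scrutiny as written is your claim that, on the $\bar\rho$ side, ``a careful distribution of the derivatives in the commutator keeps $\bar\rho$ at order at most $k-1$.'' It cannot: the commutator $[\partial^k,\mathbf{u}\cdot\nabla]\rho$ unavoidably contains the term $\partial\mathbf{u}\cdot\nabla\partial^{k-1}\rho$, in which $\bar\rho$ and $\tilde\rho$ sit at order $k$, and it is paired against $\partial^k\bar\rho$, also of order $k$; likewise the contributions $\partial^{j}u_2\,\partial_y^{k-j+1}\tilde\rho$ are paired against $\partial^k\bar\rho$. A direct H\"older bound of any of these produces a factor $||\bar\rho||_{H^k}$, which \eqref{rho_R1rho} can only convert into $||\nabla\Pi-(0,\bar\rho)||_{H^{k+1}}$, not $||\nabla\Pi-(0,\bar\rho)||_{H^{k}}$; the resulting cubic term, e.g.\ $||\mathbf{u}||_{H^k}\,||\rho||_{H^{k+1}}\,||\bar\rho||_{H^k}$, has only one damped factor and is not of the claimed form. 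The missing ingredient is an explicit integration by parts transferring one derivative from $\partial^k\bar\rho$ onto the remaining factors, which lowers $\bar\rho$ to order $k-1$ (where \eqref{rho_R1rho} applies) at the price of raising $\mathbf{u}$ or $\tilde\rho$ to order $k+1$ --- this, and not Young's inequality, is where the factors $||\mathbf{u}||_{H^{k+1}}$ and $||\rho||_{H^{k+1}}$ in the stated bound for $I^5$ originate. This is exactly what the paper does in its terms $A_2^2$ (and again inside $I^6$ and $I^7$), and it also obliges you to verify that the boundary terms produced by these integrations by parts vanish, which relies on $\rho\in X^{k}(\Omega)$, hence $\bar\rho\in X^{k}(\Omega)$ and $\tilde\rho\in X^{k}([-1,1])$ --- a point your sketch omits entirely.
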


\begin{proof}\

\noindent
\hspace{-0.9 cm} (1)\quad  If we add and subtract $\mathbf{u}\cdot\nabla\partial^k\rho$ in the second term, we obtain that:
\begin{align*}
I^1=-(\partial_t u_2,\mathbf{u}\cdot\nabla\rho)-(\partial^{k}\partial_t u_2,\partial^{k}(\mathbf{u}\cdot\nabla\rho)-\mathbf{u}\cdot\nabla\partial^k \rho)-(\partial^{k}\partial_t u_2,\mathbf{u}\cdot\nabla\partial^k \rho).
\end{align*}
	Using (\ref{Commutator}) with $f=\mathbf{u},\,g=\nabla\rho$ and the Sobolev embedding $L^{\infty}(\Omega) \hookrightarrow H^{2}(\Omega)$ it is easy to see for $k\geq 3$ that:
	\begin{align*}
\hspace{0.3 cm} I^1&\leq ||\partial_t u_2||_{L^2} ||\mathbf{u}||_{L^{\infty}} ||\nabla\rho||_{L^2}+||\partial^{k}\partial_t u_2||_{L^2} ||\partial^{k}(\mathbf{u}\cdot\nabla\rho)-\mathbf{u}\cdot\nabla\partial^k \rho||_{L^2}+||\partial^k\partial_t u_2||_{L^2} ||\mathbf{u}||_{L^{\infty}} ||\nabla\partial^k\rho||_{L^2}\\
	&\lesssim ||\partial_t \mathbf{u}||_{H^k} ||\mathbf{u}||_{H^k} ||\rho||_{H^{k+1}}\leq \left(||\partial_t \mathbf{u}||_{H^k}^2+ ||\mathbf{u}||_{H^k}^2\right) ||\rho||_{H^{k+1}}.
	\end{align*}

\noindent
\hspace{-0.9 cm} (2)\quad It is clear that we can rewrite $I^2$ as follows:
	\begin{align*}
	I^2&=-\langle\partial_t \mathbf{u}, \left(\partial_t \mathbf{u}\cdot\nabla\right) \mathbf{u}\rangle-\langle\partial_t \mathbf{u},  \left(\mathbf{u}\cdot\nabla\right) \partial_t \mathbf{u}\rangle\\
	&\phantom{=}\, -\langle\partial^{k}\partial_t \mathbf{u},\partial^{k} [\left(\partial_t \mathbf{u}\cdot\nabla\right) \mathbf{u}]-\left(\partial_t \mathbf{u}\cdot\nabla\right)\partial^k \mathbf{u}\rangle-\langle\partial^{k}\partial_t \mathbf{u},\left(\partial_t \mathbf{u}\cdot\nabla\right)\partial^k \mathbf{u}\rangle\\
	&\phantom{=}\,-(\partial^{k}\partial_t \mathbf{u},\partial^{k} [\left(\mathbf{u}\cdot\nabla\right) \partial_t \mathbf{u}]-\left(\mathbf{u}\cdot\nabla\right)\partial^k\partial_t \mathbf{u} )-(\partial^{k}\partial_t \mathbf{u},\left(\mathbf{u}\cdot\nabla\right)\partial^k\partial_t \mathbf{u} )
	\end{align*}
	and since $\nabla\cdot\mathbf{u}=0$ in $\Omega$ and $\mathbf{u}\cdot \mathbf{n}=0$ on $\partial\Omega$,  the last term vanish. Then, we have:
	\begin{align*}
\hspace{0.3 cm} I_2 \lesssim\, &||\partial_t \mathbf{u}||_{L^2}^2||\nabla\mathbf{u}||_{L^{\infty}}+||\partial_t \mathbf{u}||_{L^2}||\nabla \partial_t \mathbf{u}||_{L^2}||\mathbf{u}||_{L^{\infty}}+||\partial^k\partial_t\mathbf{u}||_{L^2} ||\partial_t\mathbf{u}||_{L^{\infty}} ||\nabla\partial^k\mathbf{u}||_{L^2}\\
	&+||\partial^{k}\partial_t \mathbf{u}||_{L^2}\left(||\partial^{k} [\left(\partial_t \mathbf{u}\cdot\nabla\right) \mathbf{u}]-\left(\partial_t \mathbf{u}\cdot\nabla\right)\partial^k \mathbf{u}||_{L^2}+||\partial^{k} [\left(\mathbf{u}\cdot\nabla\right) \partial_t \mathbf{u}]-\left(\mathbf{u}\cdot\nabla\right)\partial^k\partial_t \mathbf{u}||_{L^2}\right).
	\end{align*}
	As before, by (\ref{Commutator}) with $f=\partial_t\mathbf{u},\, g=\nabla\mathbf{u}$ or $f=\mathbf{u},\, g=\nabla\partial_t \mathbf{u}$ and the Sobolev embedding we get for $k\geq 3$:
\begin{equation*}
I^2\lesssim ||\partial_t \mathbf{u}||_{H^k}^2 ||\mathbf{u}||_{H^{k+1}}.	
\end{equation*}

\noindent
\hspace{-0.9 cm} (3)\quad By definition, we have that $I^3=-2\,\langle \mathbf{u},\left(\mathbf{u}\cdot\nabla\right) \mathbf{u}\rangle-2\,\langle \partial^{k}\mathbf{u},\partial^k\,\left[\left(\mathbf{u}\cdot\nabla \right) \mathbf{u}\right]\rangle$. If we add and subtract $\mathbf{u}\cdot\nabla\partial^k\mathbf{u}$ in the second term we obtain that:
$$I^3=-2\,\langle \mathbf{u},\left(\mathbf{u}\cdot\nabla\right) \mathbf{u}\rangle-2\,\langle \partial^{k}\mathbf{u},\left(\mathbf{u}\cdot\nabla \right)\partial^k\mathbf{u}\rangle-2\,\langle \partial^{k}\mathbf{u},\partial^k\,\left[\left(\mathbf{u}\cdot\nabla \right) \mathbf{u}\right]-\left(\mathbf{u}\cdot\nabla \right)\partial^k\mathbf{u}\rangle$$
and since $\nabla\cdot\mathbf{u}=0$ in $\Omega$ and $\mathbf{u}\cdot \mathbf{n}=0$ on $\partial\Omega$,  the first two terms vanishes. Again by (\ref{Commutator}) with $f=\nolinebreak \mathbf{u},\, g=\nolinebreak \nabla\mathbf{u}$ and the Sobolev embedding we get for $k\geq 3$ that:
\begin{equation*}
I^3\lesssim  ||\mathbf{u}||_{H^{k}}^3.
\end{equation*}

\noindent
\hspace{-0.9 cm} (4)\quad We rewrite $I^4$ in a more adequate way:
	\begin{align*}
	I^4=&-\langle\left(\mathbf{u}\cdot\nabla\right) \mathbf{u},\nabla\Pi-(0,\bar{\rho})\rangle-\langle \left(\mathbf{u}\cdot\nabla\right) \partial^k \mathbf{u},\partial^k[\nabla\Pi-(0,\bar{\rho})]\rangle-\langle\partial^k [\left(\mathbf{u}\cdot\nabla\right) \mathbf{u}]-\left(\mathbf{u}\cdot\nabla\right) \partial^k \mathbf{u},\partial^k[\nabla\Pi-(0,\bar{\rho})]\rangle.
	\end{align*}
Then, we have:
\begin{align*}
I^4\leq\, &||\mathbf{u}||_{L^{\infty}}\left(||\nabla\mathbf{u}||_{L^2} ||\nabla\Pi-(0,\bar{\rho})||_{L^2}+||\nabla\partial^k\mathbf{u}||_{L^2} ||\partial^k\left[\nabla\Pi-(0,\bar{\rho})\right]||_{L^2}  \right)\\
&+||\partial^k [\left(\mathbf{u}\cdot\nabla\right) \mathbf{u}]-\left(\mathbf{u}\cdot\nabla\right) \partial^k \mathbf{u}||_{L^2}||\partial^k[\nabla\Pi-(0,\bar{\rho})]||_{L^2}.
\end{align*}
As before,  by (\ref{Commutator}) with $f=\mathbf{u},\, g=\nabla\mathbf{u}$ and the Sobolev embedding we get for $k\geq 3$:
\begin{equation*}
I^4\lesssim  ||\nabla\Pi-(0,\bar{\rho})||_{H^k} ||\mathbf{u}||_{H^k} ||\mathbf{u}||_{H^{k+1}} \leq \left(||\nabla\Pi-(0,\bar{\rho})||_{H^k}^2+||\mathbf{u}||_{H^k}^2\right)||\mathbf{u}||_{H^{k+1}}.
\end{equation*}

\noindent
\hspace{-0.9 cm} (5)\quad Again, since $\nabla\cdot\mathbf{u}=0$ in $\Omega$ and $\mathbf{u}\cdot \mathbf{n}=0$ on $\partial\Omega$ we obtain that
$$I^5=-(\partial^{k}(\mathbf{u}\cdot\nabla\rho)-\mathbf{u}\cdot\nabla\partial^k\rho,\partial^{k}\rho)$$
and by (\ref{Commutator}) with $f=\mathbf{u},\, g=\nabla\rho$ and the Sobolev embedding we get for $k\geq 3$:
$$I^5\lesssim ||\mathbf{u}||_{H^k} ||\rho||_{H^k}^2.$$
The above estimate is too crude, will need to carry out the energy estimates carefully to ensure that we get the desired estimate. We shall see below that the property (\ref{rho_R1rho}) is the key to close the adequate estimates.\\

\noindent
The usual method of using the Leibniz's rule give us:
\begin{align*}
I^5=&-\,\sum_{j=0}^{k-1}{k \choose j}(\partial^{j+1}u_1\,\partial^{k-1-j}\partial_x\rho,\partial^k\rho)-\,\sum_{j=0}^{k-1}{k \choose j}(\partial^{j+1}u_2\,\partial^{k-1-j}\partial_y\rho,\partial^k\rho)\\
=&A_1+A_2.
\end{align*}
For the first one, as $\partial_x\rho=\partial_x\bar{\rho}$, H\"older's inequality for $k\geq 4$ give us:
\begin{align}\label{A_1}
A_1&\lesssim ||\rho||_{H^k}\sum_{j=0}^{k-1}||\partial^{j+1}u_1\,\partial^{k-1-j}\partial_x\bar{\rho}||_{L^2} \nonumber\\
&=||\rho||_{H^k}\left[\sum_{j=0}^{1}||\partial^{j+1}u_1||_{L^{\infty}}  ||\partial^{k-1-j}\partial_x\bar{\rho}||_{L^2}+\sum_{j=2}^{k-1}||\partial^{j+1}u_1||_{L^{2}}  ||\partial^{k-1-j}\partial_x\bar{\rho}||_{L^{\infty}}\right]\nonumber\\
&\leq ||\rho||_{H^k}||u_1||_{H^k} ||\partial_x\bar{\rho}||_{H^{k-1}}\lesssim ||\rho||_{H^k}\left( ||\mathbf{u}||_{H^k}^2+ ||\nabla\Pi-(0,\bar{\rho})||_{H^k}^2\right). \nonumber\\
\end{align}
For the other one, as $u_2\equiv \bar{u}_2$ by (\ref{u2_zero_average}), we have that $u_2\perp\tilde{\rho}$ and consequently:
\begin{align*}
A_2&=-\,\sum_{j=0}^{k-1}{k \choose j}\left\lbrace(\partial^{j+1}u_2\,\partial^{k-1-j}\partial_y\bar{\rho},\partial^k\rho)+(\partial^{j+1}u_2\,\partial_y^{k-j}\tilde{\rho},\partial_y^{k-1-j}\partial^{j+1}\bar{\rho})\right\rbrace\\
&=A_2^1+A_2^2
\end{align*}
where
\begin{align*}
A_2^1:&=-\,\sum_{j=1}^{k-1}{k \choose j}(\partial^{j+1}u_2\,\partial^{k-1-j}\partial_y\bar{\rho},\partial^k\rho),\\
A_2^2:&=-(\partial u_2\,\partial^{k-1}\partial_y\bar{\rho},\partial^k\rho)-\,\sum_{j=0}^{k-1}{k \choose j}(\partial^{j+1}u_2\,\partial_y^{k-j}\tilde{\rho},\partial_y^{k-1-j}\partial^{j+1}\bar{\rho}).
\end{align*}
Now, for $A_2^1$ repeatedly applying H\"older's inequality, we get:
\begin{align*}
A_2^1\lesssim ||\rho||_{H^{k}}\, \left[\sum_{j=1}^{2} ||\partial^{j+1}u_2||_{L^{\infty}}\,||\partial^{k-1-j}\partial_y\bar{\rho}||_{L^2}+\sum_{j=3}^{k-1} ||\partial^{j+1}u_2||_{L^{2}}\,||\partial^{k-1-j}\partial_y\bar{\rho}||_{L^{\infty}}\right].
\end{align*}
So, for $k\geq 5$ by the Sobolev embedding it is clear that:
\begin{align}\label{A_2^1}
A_2^1 &\leq ||\rho||_{H^{k}}\, ||u_2||_{H^k}\,||\bar{\rho}||_{H^{k-1}}\leq ||\rho||_{H^{k}}\, ||u_2||_{H^k}\,||\nabla\Pi-(0,\bar{\rho})||_{H^k}\\ \nonumber
&\lesssim ||\rho||_{H^{k}}\left( ||\mathbf{u}||_{H^k}^2+ ||\nabla\Pi-(0,\bar{\rho})||_{H^k}^2\right)
\end{align}
where in the second inequality we have used ($\ref{rho_R1rho}$) in a crucial way. For $A_2^2$, after integration by parts, we get:
\begin{align*}
A_2^2&=\tfrac{1}{c}(\partial_y\left[\partial u_2\,\partial^k\rho\right],\partial^{k-1}\bar{\rho})-\int_{\Omega}\partial_y\left[ \partial u_2\,\partial^{k-1}\bar{\rho}\,\partial^k\rho\right]\,dxdy\\
&+\,\sum_{j=0}^{k-1}{k \choose j}(\partial\left[\partial^{j+1}u_2\,\partial_y^{k-j}\tilde{\rho}\right],\partial_y^{k-1-j}\partial^{j}\bar{\rho})-\,\sum_{j=0}^{k-1}{k \choose j}\int_{\Omega}\partial\left[ \partial^{j+1}u_2\,\partial_y^{k-j}\tilde{\rho}\,\partial_y^{k-1-j}\partial^{j}\bar{\rho} \right]\,dxdy\\
&=\tfrac{1}{c}(\partial_y\left[\partial u_2\,\partial^k\rho\right],\partial^{k-1}\bar{\rho})+\,\sum_{j=0}^{k-1}{k \choose j}(\partial\left[\partial^{j+1}u_2\,\partial_y^{k-j}\tilde{\rho}\right],\partial_y^{k-1-j}\partial^{j}\bar{\rho}).
\end{align*}
because the boundary terms are equal to zero due to at least one term that contains vanish thanks to the fact $\rho\in X^k(\Omega)$ and in consequence $\bar{\rho}\in X^k(\Omega)$ and $\tilde{\rho}\in X^k([-1,1])$.\\

After this, for $A_2^2$ repeatedly applying H\"older's inequality, we get:
\begin{align*}
A_2^2
&\lesssim  ||\bar{\rho}||_{H^{k-1}} \left(||\partial_y\partial u_2||_{L^{\infty}} ||\partial^k\rho||_{L^2}+||\partial u_2||_{L^{\infty}} ||\partial_y\partial^k\rho||_{L^{2}} \right)\\
 &+||\bar{\rho}||_{H^{k-1}}\left(\sum_{j=0}^{k-3}||\partial^{j+2}u_2||_{L^{\infty}} ||\partial_y^{k-j}\tilde{\rho}||_{L^2}+\sum_{j=k-2}^{k-1}||\partial^{j+2}u_2||_{L^{2}} ||\partial_y^{k-j}\tilde{\rho}||_{L^{\infty}}  \right)\\
 &+||\bar{\rho}||_{H^{k-1}}\left(\sum_{j=0}^{k-3}||\partial^{j+1}u_2||_{L^{\infty}} ||\partial_y^{k-j+1}\tilde{\rho}||_{L^2}+ \sum_{j=k-2}^{k-1}||\partial^{j+1}u_2||_{L^{2}} ||\partial_y^{k-j+1}\tilde{\rho}||_{L^{\infty}} \right)
\end{align*}
and by ($\ref{rho_R1rho}$) and the Sobolev embedding $L^{\infty}(\Omega) \hookrightarrow H^{2}(\Omega)$ for $k\geq 4$ we achieve:
\begin{align}\label{A_2^2}
A_2^2&\lesssim ||\bar{\rho}||_{H^{k-1}}\left(||\mathbf{u}||_{H^{k+1}} ||\rho||_{H^k}+ ||\mathbf{u}||_{H^{k}} ||\rho||_{H^{k+1}}\right)\nonumber\\
&\lesssim ||\rho||_{H^{k+1}}\left( ||\mathbf{u}||_{H^k}^2+ ||\nabla\Pi-(0,\bar{\rho})||_{H^k}^2\right)+||\nabla\Pi-(0,\bar{\rho})||_{H^k}||\mathbf{u}||_{H^{k+1}} ||\rho||_{H^k}. 
\end{align}
Collecting everything, this is (\ref{A_1}),(\ref{A_2^1}) and (\ref{A_2^2}) we have obtained that:
$$I^5\lesssim ||\rho||_{H^{k+1}}\left( ||\mathbf{u}||_{H^k}^2+ ||\nabla\Pi-(0,\bar{\rho})||_{H^k}^2\right)+||\nabla\Pi-(0,\bar{\rho})||_{H^k}||\mathbf{u}||_{H^{k+1}} ||\rho||_{H^k}.$$
\end{proof}

Up to here, we have not used our weighted energy at all. Note that $E_k(t)$ give us control of $||\rho||_{H^k}(t)$ and $||\mathbf{u}||_{H^k}$, so it is natural to define
$$\mathcal{\dot{E}}_{k+1}^{\#}(t):=\tfrac{1}{2}\left\lbrace||\rho||_{\dot{H}^{k+1}}^2(t)+\int_{\Omega} |\partial^{k+1}\textbf{u}(x,y,t)|^2\, dxdy\right\rbrace,$$
but it is not difficult to see that it is impossible to close the energy estimates with it. For this reason, to work with the weighted energy space $\mathcal{\dot{E}}_{k+1}(t)$ is decisive  to close the energy estimates. Before that, let's see what we have up to now. As $||\mathbf{u}||_{H^{k+1}}=||\mathbf{u}||_{H^{k}}+||\partial^{k+1}\mathbf{u}||_{L^2}$ we get:
\begin{equation}\label{u_upeso}
||\mathbf{u}||_{H^{k+1}}\leq ||\mathbf{u}||_{H^{k}}+\frac{1}{(1-||\partial_y\tilde{\rho}||_{L^{\infty}})^{1/2}}\,\left(\int_{\Omega} |\partial^{k+1}\mathbf{u}|^2\,(1+\partial_y\tilde{\rho})\, dxdy\right)^{1/2}
\end{equation}
and we have that:
\begin{align*}
I^1+\ldots+I^5\lesssim & \left(||\nabla\Pi-(0,\bar{\rho})||_{H^k}^2+||\mathbf{u}||_{H^k}^2+||\partial_t\mathbf{u}||_{H^k}^2\right)\tilde{\Theta}_1(t)+\left(\int_{\Omega}|\partial^{k+1}\mathbf{u}|^2\,(1+\partial_y\tilde{\rho})\,dxdy \right)\tilde{\Theta}_2(t)
\end{align*}
where
\begin{align*}
&\tilde{\Theta}_1(t):=||\rho||_{H^{k+1}}+||\mathbf{u}||_{H^k}+\frac{1}{(1-||\partial_y\tilde{\rho}||_{L^{\infty}})^{1/2}}\,\left(\int_{\Omega} |\partial^{k+1}\mathbf{u}|^2\,(1+\partial_y\tilde{\rho})\, dxdy\right)^{1/2},\\
&\tilde{\Theta}_2(t):=\frac{||\rho||_{H^k}}{1-||\partial_y\tilde{\rho}||_{L^{\infty}}}.
\end{align*}
In consequence, we have proved that for $k\geq 5$ there exists $0<C<1$ and $\tilde{C}>0$ large enough such that:
\begin{align}\label{Pre-estimate}
\partial_t\mathfrak{E}_{k+1}(t)\leq &-\left(C-\tilde{C}\,\tilde{\Theta}_1(t)\right)\left[||\mathbf{u}||_{H^k}^2+||\nabla\Pi-(0,\bar{\rho})||_{H^k}^2+||\left(\mathbf{u}\cdot\nabla\right) \mathbf{u}||_{H^k}^2+||\partial_t \mathbf{u}||_{H^k}^2\right] \nonumber\\
&-\left(1-\tilde{C}\,\tilde{\Theta}_2(t)\right)\int_{\Omega} |\partial^{k+1}\mathbf{u}|^2\,(1+\partial_y\tilde{\rho})\, dxdy \nonumber\\
&+I^6+I^7+I^8+I^9.
\end{align}
The aim of the next part is to make appear nice controlled terms via the use of a useful decomposition of each term $\{I^m\}_{m=6}^9$. Thanks to the weight $1+\partial_y\tilde{\rho}(y,t)$ in the definition of $\mathcal{\dot{E}}_{k+1}(t)$ we are able to control each term in our estimations. This is the goal of the next lemma, which it is crucial to proving the main theorem of this section.

\begin{lemma}\label{I6_I9}
The following estimates holds for $k\geq 6$:
$$\hspace{-0.9 cm}(1)\quad I^6 \lesssim ||\mathbf{u}||_{H^{4}} ||\bar{\rho}||_{H^{k+1}}^2+||\tilde{\rho}||_{H^{k+1}}\left(||\mathbf{u}||_{H^k}^2+||\nabla\Pi-(0,\bar{\rho})||_{H^k}^2\right)+ \frac{ ||\tilde{\rho}||_{H^{k+1}}}{1-||\partial_y\tilde{\rho}||_{L^{\infty}}} \left(\int_{\Omega} |\partial^{k+1}\mathbf{u}|^2\,(1+\partial_y\tilde{\rho})\,dxdy \right).$$

$$\hspace{-2.4 cm} (2)\quad I^7 \lesssim  ||\tilde{\rho}||_{H^{k+1}}\left( ||\nabla\Pi-(0,\bar{\rho})||_{H^k}^2+||\mathbf{u}||_{H^k}^2\right)
+\frac{(1+||\mathbf{u}||_{H^k})||\rho||_{H^{k+1}} }{1-||\partial_y\tilde{\rho}||_{L^{\infty}}}\left(\int_{\Omega} |\partial^{k+1}\mathbf{u}|^2\,(1+\partial_y\tilde{\rho})\, dxdy\right).$$

$$\hspace{-0.9 cm} (3) \quad  I^8 \lesssim   \, (1+||\partial_y\tilde{\rho}||_{L^{\infty}})^{1/2}\left(\int_{\Omega} |\partial^{k+1}\mathbf{u}|^2 (1+\partial_y\tilde{\rho})\,dxdy\right)^{1/2}\, ||\mathbf{u}||_{H^k}^2+\frac{||\mathbf{u}||_{H^k}\,(1+||\rho||_{H^{k+1}})}{1-||\partial_y\tilde{\rho}||_{L^{\infty}}}\left(\int_{\Omega} |\partial^{k+1}\mathbf{u}|^2 (1+\partial_y\tilde{\rho})\,dxdy\right).$$

$$\hspace{-0.9 cm} (4)\quad I^9\lesssim \, ||\mathbf{u}||_{H^{4}}\, ||\bar{\rho}||_{H^{k+1}}^2+ ||\bar{\rho}||_{H^{k+1}}\left(||\nabla\Pi-(0,\bar{\rho})||_{H^{k}}^2+||\mathbf{u}||_{H^k}^2\right) \nonumber + \frac{||\bar{\rho}||_{H^{k+1}}}{1-||\partial_y\tilde{\rho}||_{L^{\infty}}} \left(\int_{\mathbb{T}\times\R} |\partial^{k+1}\mathbf{u}|^2\,(1+\partial_y\tilde{\rho})\, dxdy \right).$$

\end{lemma}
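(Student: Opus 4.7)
All four estimates fit a common pattern. In each term I extract the top-order derivative $\partial^{k+1}$ by one of three mechanisms: (a) the Kato--Ponce-type commutator estimate \eqref{Commutator}, which removes $\partial^{k+1}$ at the cost of a Sobolev norm of either factor; (b) an integration by parts exploiting $\nabla\cdot\mathbf{u}=0$ together with $\mathbf{u}\cdot\mathbf{n}=0$ (equivalently $u_2|_{\partial\Omega}=0$), which turns a transport-type top-order term into a commutator with a coefficient; (c) the Leray projector viewed through the evolution equation, which lets me write $\partial_y\Pi-\bar\rho=-\partial_tu_2-u_2-((\mathbf{u}\cdot\nabla)\mathbf{u})_2$ (and likewise $\partial_x\Pi$), so that $\nabla\Pi-(0,\bar\rho)$ is directly part of the energy. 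Lower-order factors are always dumped into $L^\infty$ via Sobolev embedding, which is licit because $k\geq 6$ leaves at least $k-2\geq 4$ derivatives on the remaining factor. Whenever $\|\mathbf{u}\|_{H^{k+1}}$ appears, I replace it by the weighted version via \eqref{u_upeso}; whenever $\|\bar\rho\|_{H^s}$ appears with $s\leq k-1$, I apply \eqref{rho_R1rho} to bound it by $\|\nabla\Pi-(0,\bar\rho)\|_{H^{s+1}}$ and absorb it into the first group of terms in the stated inequalities.

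\textbf{Treatment of $I^8$ and $I^9$.} For $I^8$ I split $\partial^{k+1}[(\mathbf{u}\cdot\nabla)\mathbf{u}]=(\mathbf{u}\cdot\nabla)\partial^{k+1}\mathbf{u}+[\partial^{k+1},\mathbf{u}\cdot\nabla]\mathbf{u}$. The transport piece, after one IBP and using $\nabla\cdot\mathbf{u}=0$ and $u_2|_{\partial\Omega}=0$, collapses to $-\tfrac12\int_\Omega |\partial^{k+1}\mathbf{u}|^2\,u_2\,\partial_y^2\tilde\rho\,dxdy$; the coefficient $\|u_2\partial_y^2\tilde\rho\|_{L^\infty}\lesssim\|\mathbf{u}\|_{H^k}\|\rho\|_{H^{k+1}}$ and the trivial bound $\|\partial^{k+1}\mathbf{u}\|_{L^2}^2\leq (1-\|\partial_y\tilde\rho\|_{L^\infty})^{-1}\int|\partial^{k+1}\mathbf{u}|^2(1+\partial_y\tilde\rho)$ yield the last term claimed for $I^8$. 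The commutator piece is bounded via \eqref{Commutator} (with $f=\mathbf{u}$, $g=\nabla\mathbf{u}$) by $\|\mathbf{u}\|_{H^{k+1}}\|\mathbf{u}\|_{H^k}$, paired against $\|\partial^{k+1}\mathbf{u}\|_{L^2_{(1+\partial_y\tilde\rho)}}$ with a $(1+\|\partial_y\tilde\rho\|_{L^\infty})^{1/2}$ factor; expanding $\|\mathbf{u}\|_{H^{k+1}}$ by \eqref{u_upeso} and applying Young's inequality produces the first and third summands of the stated bound. The term $I^9$ is handled identically, only easier: the transport piece vanishes exactly by IBP (no weight), and the commutator $[\partial^{k+1},\mathbf{u}\cdot\nabla]\bar\rho$ is controlled by $\|\mathbf{u}\|_{H^4}\|\bar\rho\|_{H^{k+1}}+\|\mathbf{u}\|_{H^{k+1}}\|\bar\rho\|_{H^3}$; replacing $\|\bar\rho\|_{H^3}$ through \eqref{rho_R1rho} and $\|\mathbf{u}\|_{H^{k+1}}$ through \eqref{u_upeso} gives exactly the three summands claimed for $I^9$.

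\textbf{Treatment of $I^6$ and $I^7$.} These are the delicate ones because at most places the top derivative can land on the ``flat'' factor $\tilde\rho(y)$, which only lives in $H^{k+1}([-1,1])$. For the first integrand of $I^6$ I observe that, since $1+\partial_y\tilde\rho$ depends only on $y$, the bracket equals $-[\partial^{k+1},\partial_y\tilde\rho]u_2$, which is estimated by \eqref{Commutator} with $f=\partial_y\tilde\rho$ and $g=u_2$, then paired with $\|\partial^{k+1}\bar\rho\|_{L^2}$. For the second integrand I shift one $y$-derivative via IBP to obtain $-\int\partial^{k+1}\tilde\rho\cdot\partial_y\partial^{k+1}(u_2\bar\rho)\,dxdy$; the boundary terms vanish because the parity structure of our spaces forces either $\tilde\rho\in X^{k+1}$ or $u_2\bar\rho\in Y^{k+1}$ to vanish at $y=\pm1$, and the resulting integral is handled by the Leibniz rule, \eqref{product_rule} and \eqref{rho_R1rho}. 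For $I^7$, the weights $\partial_y^2\tilde\rho$ and $\partial_t\partial_y\tilde\rho$ are placed in $L^\infty$ via Sobolev: the first is simply $\lesssim\|\tilde\rho\|_{H^{k+1}}$, while for the second I use $\partial_t\tilde\rho=-\widetilde{\mathbf{u}\cdot\nabla\bar\rho}=-\partial_y\widetilde{u_2\bar\rho}$ to bound $\|\partial_t\partial_y\tilde\rho\|_{L^\infty}\lesssim\|\mathbf{u}\|_{H^k}\|\rho\|_{H^{k+1}}$. In the first summand I rewrite $\|\partial^{k+1}\Pi\|_{L^2}\leq\|\nabla\Pi-(0,\bar\rho)\|_{H^k}+\|\bar\rho\|_{H^k}$ and convert $\|\partial^{k+1}u_2\|_{L^2}$ to the weighted norm via $(1-\|\partial_y\tilde\rho\|_{L^\infty})^{-1/2}$; a final Young splitting distributes the cross-products into the two groups of terms in the stated bound.

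\textbf{Main obstacle.} The genuine technical difficulty is the second half of $I^6$: as written, the integrand features $\partial^{k+2}\tilde\rho$, one order \emph{above} the regularity furnished by $\mathfrak E_{k+1}$, so the whole manipulation hinges on being able to integrate by parts in $y$ without a boundary contribution. Justifying this relies precisely on the $X^{k+1}/Y^{k+1}$ parity conditions that motivated our choice of functional framework in Section \ref{Sec_3} and the Galerkin basis in Section \ref{Sec_Basis}; without them, the energy argument would simply not close. Once this single IBP is in place, the remainder of the proof is a careful but routine bookkeeping exercise with \eqref{Commutator}, \eqref{product_rule}, \eqref{rho_R1rho} and \eqref{u_upeso}, tracking the $(1\pm\|\partial_y\tilde\rho\|_{L^\infty})$ factors so that every term lands in one of the three groups appearing in the statement of the lemma.
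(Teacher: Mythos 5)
Your treatment of $I^8$ and $I^9$ matches the paper's and is fine, but your handling of $I^6$ has a genuine gap: you miss the exact cancellation between the two halves of $I^6$, which is the whole point of the weighted energy. Concretely, applying \eqref{Commutator} to the first integrand with $f=1+\partial_y\tilde\rho$, $g=u_2$ and $s=k+1$ produces the term $\|1+\partial_y\tilde\rho\|_{H^{k+1}}\|u_2\|_{L^\infty}\sim\|\tilde\rho\|_{H^{k+2}}\|u_2\|_{L^\infty}$, one derivative beyond what $\mathfrak E_{k+1}$ controls; this is unavoidable for $I^6_1$ alone, since its Leibniz expansion literally contains $-\int_\Omega\partial_y^{k+2}\tilde\rho\,u_2\,\partial_y^{k+1}\bar\rho$. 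Likewise your single integration by parts in $I^6_2$ only trades $\partial_y^{k+2}\tilde\rho$ for $k+2$ derivatives on the product $u_2\bar\rho$ (e.g.\ $u_2\,\partial_y^{k+2}\bar\rho$ or $\partial_y^{k+2}u_2\cdot\bar\rho$), which are equally out of reach, and integrating by parts again just brings $\partial_y^{k+2}\tilde\rho$ back. The paper's proof instead expands both $I^6_1$ and $I^6_2$ by Leibniz and observes that their worst terms, $\mp\int\partial_y^{k+2}\tilde\rho\,u_2\,\partial_y^{k+1}\bar\rho$, cancel \emph{exactly}; only after this cancellation does it integrate by parts in the remaining sum $\sum_{j\geq1}$, where every factor is at most at top order. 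This cancellation is the structural reason the weight $1+\partial_y\tilde\rho$ was put into $\dot{\mathcal E}_{k+1}$ in the first place, and without it the estimate for $I^6$ cannot close.

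A secondary issue is your bound $\|\partial^{k+1}\Pi\|_{L^2}\leq\|\nabla\Pi-(0,\bar\rho)\|_{H^k}+\|\bar\rho\|_{H^k}$ in $I^7_1$: after Young this leaves a residue of the form $\|\tilde\rho\|_{H^{k+1}}\|\bar\rho\|_{H^k}^2$, and $\|\bar\rho\|_{H^k}$ cannot be absorbed into $\|\nabla\Pi-(0,\bar\rho)\|_{H^k}$ via \eqref{rho_R1rho} (which costs a derivative). A term $\|\rho\|_{H^{k+1}}\,\mathfrak E_{k+1}$ would destroy the bootstrap, since $\|\rho\|_{H^{k+1}}$ does not decay. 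The paper avoids this by splitting into the case of at least one $x$-derivative (where $\partial^k\partial_x\Pi$ is directly a component of $\partial^k[\nabla\Pi-(0,\bar\rho)]$) and the all-$y$ case, where an integration by parts using $\partial_yu_2=-\partial_xu_1$ again lands on $\partial_y^k\partial_x\Pi$; the pressure must always be made to appear with an $x$-derivative.
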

\begin{proof}\

\noindent
\hspace{-0.9 cm} (1)\quad Obviously, the more singular term is $I^6$. The estimation of such a term requires a long splitting
into several controlled terms.
By definition we have that: 
\begin{align*}
\mathrm{I}^6&=\int_{\Omega} \left((1+\partial_y\tilde{\rho})\,\partial^{k+1}u_2-\partial^{k+1}\left((1+\partial_y\tilde{\rho})\,u_2\right)\right)\, \partial^{k+1}\bar{\rho}\,dxdy\,+\int_{\Omega} \partial^{k+2}\tilde{\rho}\, \partial^{k+1}\left(u_2 \,\overline{\rho}\right)\,dxdy\\
&=\mathrm{I}_1^6+\mathrm{I}_2^6.
\end{align*}
Applying the chain rule, the terms becomes:
$$\mathrm{I}_1^6=-\int_{\Omega} \partial_y^{k+2}\tilde{\rho}\, \, u_2\,\partial_y^{k+1}\bar{\rho}\,dxdy-\sum_{j=1}^{k}{k+1\choose j}\int_{\Omega}\partial_y^{j+1}\tilde{\rho}\,\partial^{k+1-j}u_2\,\partial^{k+1-j}\partial_y^j\bar{\rho}\,dxdy$$
and
$$\mathrm{I}_2^6=\int_{\Omega} \partial_y^{k+2}\tilde{\rho}\,u_2\, \partial_y^{k+1}\bar{\rho}\,dxdy+\sum_{j=1}^{k+1}{k+1\choose j}\int_{\Omega}\partial_y^{k+2}\tilde{\rho}\,\partial_y^ju_2\,\partial_y^{k+1-j}\bar{\rho}\,dxdy$$
and we show that the first terms cancel each other out. This cancellation is the key step, that is the crucial point for which we need to work with the weighted energy space $\mathcal{\dot{E}}_{k+1}(t)$. Now, if we work a little more carefully with $I_2^6$, after integration by parts in the summation we get:
\begin{align*}
\mathrm{I}_2^6&=\int_{\Omega} \partial_y^{k+2}\tilde{\rho}\,u_2\, \partial_y^{k+1}\bar{\rho}\,dxdy-\sum_{j=1}^{k+1}{k+1\choose j}\int_{\Omega} \partial_y^{k+1}\tilde{\rho}\,\partial_y\left[\partial_y^j u_2\,\partial_y^{k+1-j}\bar{\rho}\right]\,dxdy\\
&+\sum_{j=1}^{k+1}{k+1\choose j}\int_{\Omega}\partial_y \left[\partial_y^{k+1}\tilde{\rho}\,\partial_y^j u_2\,\partial_y^{k+1-j}\bar{\rho}\right]\,dxdy.
\end{align*}
Again, the boundary term vanish because $\rho\in X^k(\Omega)$ and in consequence $\bar{\rho}\in X^k(\Omega)$ and $\tilde{\rho}\in X^k([-1,1])$. From this, $\mathrm{I}^6$ is simply:
\begin{align*}
\mathrm{I}^6=&-\sum_{j=1}^{k}{k+1\choose j}\int_{\Omega}\partial_y^{j+1}\tilde{\rho}\,\partial^{k+1-j}u_2\,\partial^{k+1-j}\partial_y^j\bar{\rho}\,dxdy\\
&-\sum_{j=1}^{k+1}{k+1\choose j}\int_{\Omega}\partial_y^{k+1}\tilde{\rho}\,\partial_y^{j+1} u_2\,\partial_y^{k+1-j}\bar{\rho}\,dxdy-\sum_{j=1}^{k+1}{k+1\choose j}\int_{\Omega}\partial_y^{k+1}\tilde{\rho}\,\partial_y^j u_2\,\partial_y^{k+2-j}\bar{\rho}\,dxdy\\
=& \, B_1+B_2+B_3.
\end{align*}
We analyze separately the terms in the previous expression. First of all, we split the term $B_1$ as follows:
\begin{align*}
B_1&=-\sum_{j=1}^{k-2}{k+1\choose j}\int_{\Omega}\partial_y^{j+1}\tilde{\rho}\,\partial^{k+1-j}u_2\,\partial^{k+1-j}\partial_y^j\bar{\rho}\,dxdy-\sum_{j=k-1}^{k}{k+1\choose j}\int_{\Omega}\partial_y^{j+1}\tilde{\rho}\,\partial^{k+1-j}u_2\,\partial^{k+1-j}\partial_y^j\bar{\rho}\,dxdy\\
&=B_1^1+B_1^2.
\end{align*}
Indeed, $B_1^2$ are the only terms in $B_1$ that cannot be absorbed by the linear part. These type of terms are the reason why we need to have a integrabe time decay of the velocity. Precisely, the main goal of the next section  is to
obtain a time decay rate for it. Then, for $B_1^2$ we have that:
\begin{align}\label{B_1^2}
B_1^2&\lesssim \left(||\partial^k \tilde{\rho}||_{L^{\infty}}\, ||\partial^2 u_2||_{L^2}+||\partial^{k+1}\tilde{\rho}||_{L^2}\, ||\partial u_2||_{L^{\infty}}\right) ||\bar{\rho}||_{H^{k+1}}\leq ||u_2||_{H^{3}}\, ||\rho||_{H^{k+1}}^2
\end{align}
where in the last inequality, we have used the Sobolev embedding $L^{\infty}([-1,1]) \hookrightarrow H^{1}([-1,1])$. In particular, as $\tilde{\rho}$ only depend on the vertical variable, we have the bound $||\partial^k \tilde{\rho}||_{L^{\infty}([-1,1])}\leq ||\tilde{\rho}||_{H^{k+1}([-1,1])}\leq ||\rho||_{H^{k+1}(\Omega)}$.\\

\noindent
To study the term $B_1^1$, we distinguish two cases:\\
\textit{i)} All derivatives are in $y$, i.e. $\partial^{k+1-j}\equiv \partial_y^{k+1-j}$ and in consequence we get:
$$B_1^1=-\sum_{j=1}^{k-2}{k+1\choose j}\int_{\Omega}\partial_y^{j+1}\tilde{\rho}\,\partial_y^{k+1-j}u_2\,\partial_y^{k+1}\bar{\rho}\,dxdy.$$
By integration by parts and the fact that $\partial_y u_2=-\partial_x u_1$ we get:
\begin{align*}
B_1^1=&\sum_{j=1}^{k-2}\int_{\Omega} \partial_y^2\left(\partial_y^{j+1}\tilde{\rho}\,\partial_y^{k-j}u_1\right)\,\partial_y^{k-1}\partial_x\bar{\rho}\,dxdy+\int_{\Omega}\partial_y\left[\partial_y^{j+1}\tilde{\rho}\,\partial_y^{k+1-j}u_2\,\partial_y^k\bar{\rho} \right]\,dxdy\\
&-\int_{\Omega}\partial_y\left[\partial_y\left[\partial_y^{j+1}\tilde{\rho}\,\partial_y^{k+1-j}u_2\, \right] \partial_y^{k-1}\bar{\rho}\right]\,dxdy-\int_{\Omega}\partial_x\left[\partial_y^2\left(\partial_y^{j+1}\tilde{\rho}\,\partial_y^{k-j}u_1\right)\,\partial_y^{k-1}\bar{\rho}\right]\,dxdy\\
=&\sum_{j=1}^{k-2}\int_{\Omega} \left[\partial_y^{j+3}\tilde{\rho}\,\partial_y^{k-j}u_1+\partial_y^{j+1}\tilde{\rho}\,\partial_y^{k+2-j}u_1+2\,\partial_y^{j+2}\tilde{\rho}\,\partial_y^{k+1-j}u_1\right]\,\partial_y^{k-1}\partial_x\bar{\rho}\,dxdy\\
=&B_1^{1,1}+B_1^{1,2}+B_{1}^{1,3}.
\end{align*}	
Once again, the boundary terms vanishes because the structure of our initial data is preserved in time. For the rest, repeatedly applying H\"older's inequality we arrive to our goal.

For the first one, with $k\geq 4$ and the Sobolev embedding $L^{\infty}([-1,1]) \hookrightarrow H^{1}([-1,1])$, we have the bound:
\begin{align*}
B_1^{1,1}&\leq ||\partial_x\bar{\rho}||_{H^{k-1}}\left[\sum_{j=1}^{k-3}||\partial_y^{j+3}\tilde{\rho}||_{L^{\infty}}\,||\partial_y^{k-j}u_1||_{L^2}+||\partial_y^{k+1}\tilde{\rho}||_{L^2}\,||\partial_y^2u_1||_{L^{\infty}} \right]\\
&\lesssim ||\nabla\Pi-(0,\bar{\rho})||_{H^k} \, ||\tilde{\rho}||_{H^{k+1}}\, ||u_1||_{H^{k}}\leq ||\tilde{\rho}||_{H^{k+1}}\left(||\nabla\Pi-(0,\bar{\rho})||_{H^k}^2+  ||\mathbf{u}||_{H^{k}}^2\right). 
\end{align*}
For the second one with $k\geq 4$, we have:
\begin{align*}
B_1^{1,2}&\leq ||\partial_x\bar{\rho}||_{H^{k-1}}\left[||\partial_y^{2}\tilde{\rho}\,\partial_y^{k+1}u_1||_{L^2}+\sum_{j=2}^{k-2}||\partial_y^{j+1}\tilde{\rho}||_{L^{\infty}}\,||\partial_y^{k+2-j}u_1||_{L^2}\right]\\
&\lesssim ||\nabla\Pi-(0,\bar{\rho})||_{H^k}\left[||\partial_y^{2}\tilde{\rho}\,\partial_y^{k+1}u_1||_{L^2}+||\tilde{\rho}||_{H^{k}}\, ||u_1||_{H^{k}}\right]
\end{align*}
where
\begin{align*}
||\partial_y^{2}\tilde{\rho}\,\partial_y^{k+1}u_1||_{L^2}\leq \frac{ ||\tilde{\rho}||_{H^{k}}}{(1-||\partial_y\tilde{\rho}||_{L^{\infty}})^{1/2}} \left(\int_{\Omega} |\partial_y^{k+1}u_1|^2\,(1+\partial_y\tilde{\rho})\,dxdy \right)^{1/2}.
\end{align*}
Therefore, for $k\geq 4$, we have that:
\begin{equation*}
\quad B_1^{1,2}\lesssim  ||\tilde{\rho}||_{H^{k}} \left(||\nabla\Pi-(0,\bar{\rho})||_{H^k}^2+ ||\mathbf{u}||_{H^{k}}^2\right)+\frac{ ||\tilde{\rho}||_{H^{k}}}{(1-||\partial_y\tilde{\rho}||_{L^{\infty}})} \left(\int_{\Omega} |\partial^{k+1}\mathbf{u}|^2\,(1+\partial_y\tilde{\rho})\,dxdy \right).\\
\end{equation*}

\noindent
The last one is the simplest, if $k\geq 4$ we have:
\begin{align*}
B_1^{1,3}&\lesssim  ||\partial_x\bar{\rho}||_{H^{k-1}}\sum_{j=1}^{k-2}||\partial_y^{j+2}\tilde{\rho}||_{L^{\infty}}\,||\partial_y^{k+1-j}u_1||_{L^2} \lesssim ||\tilde{\rho}||_{H^{k+1}}\left(||\nabla\Pi-(0,\bar{\rho})||_{H^k}^2+||\mathbf{u}||_{H^{k}}^2\right).
\end{align*}

\noindent
And finally, for $k\geq 4$ we have proved that:
\begin{align*}
B_1^{1,1}+B_1^{1,2}+B_1^{1,3}&\lesssim ||\tilde{\rho}||_{H^{k+1}}\,  \left(||\nabla\Pi-(0,\bar{\rho})||_{H^k}^2 + ||\mathbf{u}||_{H^{k}}^2\right)+\frac{ ||\tilde{\rho}||_{H^{k}}}{(1-||\partial_y\tilde{\rho}||_{L^{\infty}})} \left(\int_{\Omega} |\partial^{k+1}\mathbf{u}|^2\,(1+\partial_y\tilde{\rho})\,dxdy \right).
\end{align*}

\noindent
\textit{ii)} We have at least one derivative in $x$, i.e. $\partial^{k+1-j}\equiv \partial^{k-j}\partial_x$ and in consequence we get:
$$B_1^1=-\sum_{j=1}^{k-2}{k+1\choose j}\int_{\Omega}\partial_y^{j+1}\tilde{\rho}\,\partial^{k-j}\partial_x u_2\,\partial^{k}\partial_x\bar{\rho}\,dxdy.$$
By integration by parts and the fact that $\partial_y u_2=-\partial_x u_1$ we get:
\begin{align*}
B_1^1=&\sum_{j=1}^{k-2}\int_{\Omega} \partial\left(\partial_y^{j+1}\tilde{\rho}\,\partial^{k-j}\partial_x u_2\right)\,\partial^{k-j-1}\partial_y^{j}\partial_x\bar{\rho}\,dxdy+\int_{\Omega}\partial\left[\partial_y^{j+1}\tilde{\rho}\,\partial^{k-j}\partial_x u_2\,\partial^{k-j-1}\partial_y^{j}\partial_x\bar{\rho} \right]\, dxdy\\
=&\sum_{j=1}^{k-2}\int_{\Omega} \left[\partial_y^{j+2}\tilde{\rho}\,\partial^{k-j}\partial_x u_2+ \partial_y^{j+1}\tilde{\rho}\,\partial^{k+1-j}\partial_x u_2 \right]\,\partial^{k-j-1}\partial_y^{j}\partial_x\bar{\rho}\,dxdy\\
=&\, B_1^{1,1}+B_1^{1,2}.
\end{align*}	
In this case, the first one is the simplest. For $k\geq 4$ we have:
\begin{align*}
B_1^{1,1}& \lesssim ||\partial_x\bar{\rho}||_{H^{k-1}}\sum_{j=1}^{k-2}||\partial_y^{j+2}\tilde{\rho}||_{L^{\infty}}\,||\partial^{k-j}\partial_x u_2||_{L^2} \lesssim ||\tilde{\rho}||_{H^{k+1}}\left(||\nabla\Pi-(0,\bar{\rho})||_{H^k}^2+||\mathbf{u}||_{H^{k}}^2\right)
\end{align*}
by the Sobolev embedding $L^{\infty}([-1,1]) \hookrightarrow H^{1}([-1,1])$ in dimension one. 
For the next, also if $k\geq 4$ we have:
\begin{align*}
B_1^{1,2}& \lesssim ||\partial^{k-j-1}\partial_y^{j}\partial_x\bar{\rho}||_{L^2} \left[||\partial_y^{2}\tilde{\rho}\,\partial^{k}\partial_x u_2||_{L^2}+\sum_{j=2}^{k-2}||\partial_y^{j+1}\tilde{\rho}||_{L^{\infty}}\,||\partial^{k+1-j}\partial_x u_2||_{L^2}\right] \\
&\lesssim ||\nabla\Pi-(0,\bar{\rho})||_{H^k} \left[||\partial_y^{2}\tilde{\rho}\,\partial^{k}\partial_x u_2||_{L^2}+ ||\tilde{\rho}||_{H^{k}}\, ||u_2||_{H^{k}}\right]
\end{align*}
where
\begin{align*}
||\partial_y^{2}\tilde{\rho}\,\partial^{k}\partial_x u_2||_{L^2}&\leq  \frac{||\tilde{\rho}||_{H^{k}}}{(1-||\partial_y\tilde{\rho}||_{L^{\infty}})^{1/2}} \left(\int_{\Omega} |\partial^{k}\partial_x u_2|^2\,(1+\partial_y\tilde{\rho})\,dxdy \right)^{1/2}.
\end{align*}
Therefore, for $k\geq 4$, we have that:
\begin{equation*}
\quad B_1^{1,2}\lesssim  ||\tilde{\rho}||_{H^{k}} \left(||\nabla\Pi-(0,\bar{\rho})||_{H^k}^2+ ||\mathbf{u}||_{H^{k}}^2\right)+\frac{ ||\tilde{\rho}||_{H^{k}}}{(1-||\partial_y\tilde{\rho}||_{L^{\infty}})} \left(\int_{\Omega} |\partial^{k+1}\mathbf{u}|^2\,(1+\partial_y\tilde{\rho})\,dxdy \right).\\
\end{equation*}

And finally, for $k\geq 4$ we have proved that:
\begin{align*}
B_1^{1,1}+B_1^{1,2}&\lesssim ||\tilde{\rho}||_{H^{k+1}}\,  \left(||\nabla\Pi-(0,\bar{\rho})||_{H^k}^2 + ||\mathbf{u}||_{H^{k}}^2\right)+\frac{ ||\tilde{\rho}||_{H^{k}}}{(1-||\partial_y\tilde{\rho}||_{L^{\infty}})} \left(\int_{\Omega} |\partial^{k+1}\mathbf{u}|^2\,(1+\partial_y\tilde{\rho})\,dxdy \right).
\end{align*}
In either case, for both \textit{i}) and \textit{ii}) together with (\ref{B_1^2}), if $k\geq 4$ we have proved
\begin{align}\label{B_1}
B_1 &\lesssim \, ||u_2||_{H^{3}}\, ||\rho||_{H^{k+1}}^2+||\tilde{\rho}||_{H^{k+1}}\,  \left(||\nabla\Pi-(0,\bar{\rho})||_{H^k}^2 + ||\mathbf{u}||_{H^{k}}^2\right) \nonumber \\
&+\frac{ ||\tilde{\rho}||_{H^{k}}}{(1-||\partial_y\tilde{\rho}||_{L^{\infty}})} \left(\int_{\Omega} |\partial^{k+1}\mathbf{u}|^2\,(1+\partial_y\tilde{\rho})\,dxdy \right).
\end{align}
On the other hand, by the incompressibility of the velocity and the periodicity, for $B_2+B_3$ we get that:
\begin{align*}
B_2+B_3=&-\sum_{j=1}^{k+1}{k+1\choose j}\int_{\Omega}\partial_y^{k+1}\tilde{\rho}\,\partial_y^{j} (\partial_y u_2)\,\partial_y^{k+1-j}\bar{\rho}\,dxdy-\sum_{j=1}^{k+1}{k+1\choose j}\int_{\Omega}\partial_y^{k+1}\tilde{\rho}\,\partial_y^j u_2\,\partial_y^{k+1-j}(\partial_y\bar{\rho})\,dxdy\\
=&-\sum_{j=1}^{k+1}{k+1\choose j}\int_{\Omega}\partial_y^{k+1}\tilde{\rho}\,\left[\partial_y^j\mathbf{u}\cdot\nabla\partial_y^{k+1-j}\bar{\rho}\right]\,dxdy
\end{align*}
and H\"older's inequality give us
\begin{align}\label{B2+B3}
B_2+B_3 & \lesssim ||\partial_y^{k+1}\tilde{\rho}||_{L^2}\, \sum_{j=1}^{k+1} ||\partial_y^j\mathbf{u}\cdot\nabla\partial_y^{k+1-j}\bar{\rho}||_{L^2}\nonumber \\
&\leq ||\tilde{\rho}||_{H^{k+1}}\left[\sum_{j=1}^{4} ||\partial_y^j\mathbf{u}||_{L^{\infty}} ||\nabla\partial_y^{k+1-j}\bar{\rho}||_{L^{2}}+\sum_{j=5}^{k} ||\partial_y^j\mathbf{u}||_{L^{2}}\,||\nabla\partial_y^{k+1-j}\bar{\rho}||_{L^{\infty}} +||\partial_y^{k+1}\mathbf{u}\cdot\nabla\bar{\rho}||_{L^2} \right]
\end{align}
where 
\begin{align*}
||\partial_y^{k+1}\mathbf{u}\cdot\nabla\bar{\rho}||_{L^2}\leq \frac{||\nabla\bar{\rho}||_{L^{\infty}}}{(1-||\partial_y\tilde{\rho}||_{L^{\infty}})^{1/2}}\,\left(\int_{\Omega} |\partial_y^{k+1}\mathbf{u}|^2\,(1+\partial_y\tilde{\rho})\,dxdy \right)^{1/2}.
\end{align*}
Moreover, for $k\geq 6$ we have that:
$$\sum_{j=1}^{4}||\partial_y^j\mathbf{u}||_{L^{\infty}} ||\nabla\partial_y^{k+1-j}\bar{\rho}||_{L^2} \lesssim ||\mathbf{u}||_{H^{4}} ||\bar{\rho}||_{H^{k+1}}+\left(||\mathbf{u}||_{H^k}^2+||\nabla\Pi-(0,\bar{\rho})||_{H^k}^2\right)$$
and
$$\sum_{j=5}^{k} ||\partial_y^j\mathbf{u}||_{L^{2}}\,||\nabla\partial_y^{k+1-j}\bar{\rho}||_{L^{\infty}}\lesssim ||\mathbf{u}||_{H^k}^2+||\nabla\Pi-(0,\bar{\rho})||_{H^k}^2 $$

In conclusion, putting all this in (\ref{B2+B3}), for $k\geq 6$ we have proved  that:
\begin{align}\label{B_2+B_3}
B_2+B_3 &\lesssim ||\mathbf{u}||_{H^{4}} ||\bar{\rho}||_{H^{k+1}}^2+||\tilde{\rho}||_{H^{k+1}}\left(||\mathbf{u}||_{H^k}^2+||\nabla\Pi-(0,\bar{\rho})||_{H^k}^2\right)\nonumber \\
&+ \frac{ ||\tilde{\rho}||_{H^{k+1}}}{1-||\partial_y\tilde{\rho}||_{L^{\infty}}} \left(\int_{\Omega} |\partial^{k+1}\mathbf{u}|^2\,(1+\partial_y\tilde{\rho})\,dxdy \right)
\end{align}
and putting together (\ref{B_1}) and (\ref{B_2+B_3}), we finally arrive at the claimed bound.\\

\noindent
\hspace{-0.9 cm} (2)\quad To work with $I^7$, first of all we must remember that $\partial_t\tilde{\rho}=-\partial_y\left(\widetilde{u_2\,\bar{\rho}}\right)$, then:
\begin{align*}
I^7=&\int_{\Omega}\partial^{k+1}u_2\, \partial^{k+1}\Pi\, \partial_y^2\tilde{\rho}\, dxdy-\tfrac{1}{2}\int_{\Omega} |\partial^{k+1}\mathbf{u}|^2\,(1+\partial_y\tilde{\rho}) \, \frac{\partial_y^2 \left(\widetilde{u_2\,\bar\rho}\right)}{1+\partial_y\tilde{\rho}}\,dxdy\\
=&I_1^7+I_2^7
\end{align*}
so
$$I_2^7\lesssim \frac{||\partial_y^2 \left(\widetilde{u_2\,\bar\rho}\right)||_{L^{\infty}}}{1-||\partial_y\tilde{\rho}||_{L^{\infty}}}\left(\int_{\mathbb{T}\times\R} |\partial^{k+1}\mathbf{u}|^2\,(1+\partial_y\tilde{\rho}) \,dxdy\right)$$
and in particular $||\partial_y^2 \left(\widetilde{u_2\,\bar\rho}\right)||_{L^{\infty}}\lesssim ||u_2||_{H^{4}}\,||\bar{\rho}||_{H^{4}}$. \\

\noindent
As before, for the $I_1^7$-term, we also distinguish between two cases:\\
\textit{i)} We have at least one derivative in $x$, i.e. $\partial^{k+1}\equiv \partial^{k}\partial_x$ and in consequence, for $k\geq 2$ we get:
\begin{align*}
I_1^7=\int_{\Omega}\partial^{k}\partial_x u_2\, \partial^{k}\partial_x\Pi\, \partial_y^2\tilde{\rho}\, dxdy &\leq ||\partial_y^2\tilde{\rho}||_{L^{\infty}}\,||\partial^k\partial_x \Pi||_{L^2}\,||\partial^k\partial_x u_2 ||_{L^2}\\
&\leq ||\tilde{\rho}||_{H^{k+1}} ||\nabla\Pi-(0,\bar{\rho})||_{H^k} ||\partial^{k+1}\mathbf{u}||_{L^2}.
\end{align*}
Together with (\ref{u_upeso}), we finally get:
$$I_1^7\leq ||\tilde{\rho}||_{H^{k+1}} ||\nabla\Pi-(0,\bar{\rho})||_{H^k}^2+\frac{||\tilde{\rho}||_{H^{k+1}} }{1-||\partial_y\tilde{\rho}||_{L^{\infty}}}\left(\int_{\Omega} |\partial^{k+1}\mathbf{u}|^2\,(1+\partial_y\tilde{\rho})\, dxdy\right).$$

\noindent
\textit{ii)} All derivatives are in $y$, i.e.  $\partial^{k+1}\equiv \partial_y^{k+1}$. By integration by parts and the fact that $\partial_y u_2=-\partial_x u_1$ we get:
\begin{align*}
I_1^7=\int_{\Omega}\partial_y^{k+1}u_2\, \partial_y^{k+1}\Pi\, \partial_y^2\tilde{\rho}\, dxdy&=-\int_{\Omega}\partial_y\left[\partial_y^k u_1 \partial_y^2 \tilde{\rho}\right]\partial_y^k\partial_x\Pi\, dx dy\\
&\phantom{=}\,\,+\int_{\Omega}\partial_y\left[\partial_y^k u_1\, \partial_y^k\partial_x\Pi\, \partial_y^2\tilde{\rho} \right]\,dxdy-\int_{\Omega}\partial_x\left[\partial_y^k u_1\, \partial_y^{k+1}\Pi\,\partial_y^2\tilde{\rho}\right]\,dxdy\\
&=-\int_{\Omega}\partial_y^{k+1}u_1\, \partial_y^{k}\partial_x\Pi\, \partial_y^2\tilde{\rho}\, dxdy-\int_{\Omega}\partial_y^{k}u_1\, \partial_y^{k}\partial_x\Pi\, \partial_y^3\tilde{\rho}\, dxdy
\end{align*}
where the boundary terms vanishes by the periodicity in the horizontal variable and the fact that $\rho\in X^k(\Omega)$. Then, for $k\geq 3$ we have:
\begin{align*}
I_1^7 &\leq ||\partial_y^{k}\partial_x\Pi||_{L^2}\left(||\partial_y^{k+1}u_1||_{L^2}\, ||\partial_y^2\tilde{\rho}||_{L^{\infty}}+||\partial_y^{k}u_1||_{L^2}\, ||\partial_y^3\tilde{\rho}||_{L^{\infty}}\right)\\
&\leq||\tilde{\rho}||_{H^{k+1}}\,||\nabla\Pi-(0,\bar{\rho})||_{H^k}\left(||\partial^{k+1} \mathbf{u}||_{L^2}+||\mathbf{u}||_{H^k}\right),
\end{align*}
and as before, by (\ref{u_upeso}) we get:
\begin{align*}
\mathrm{I}_1^7 &\lesssim\frac{||\tilde{\rho}||_{H^{k+1}} }{1-||\partial_y\tilde{\rho}||_{L^{\infty}}}\left(\int_{\Omega} |\partial^{k+1}\mathbf{u}|^2\,(1+\partial_y\tilde{\rho})\, dxdy\right)+||\tilde{\rho}||_{H^{k+1}}\left( ||\nabla\Pi-(0,\bar{\rho})||_{H^k}^2+||\mathbf{u}||_{H^k}^2\right).
\end{align*}
In any case, for $k\geq 4$ we have that:
$$\mathrm{I}^7 \lesssim\frac{(1+||\mathbf{u}||_{H^k})||\rho||_{H^{k+1}} }{1-||\partial_y\tilde{\rho}||_{L^{\infty}}}\left(\int_{\Omega} |\partial^{k+1}\mathbf{u}|^2\,(1+\partial_y\tilde{\rho})\, dxdy\right)+||\tilde{\rho}||_{H^{k+1}}\left( ||\nabla\Pi-(0,\bar{\rho})||_{H^k}^2+||\mathbf{u}||_{H^k}^2\right).$$

\noindent
\hspace{-0.9 cm} (3)\quad Applying the chain rule, $I^8$ becomes:
	\begin{align*}
	I^8=&-\int_{\Omega}\partial^{k+1}\mathbf{u}\cdot \left(\mathbf{u}\cdot\nabla \right)\partial^{k+1}\mathbf{u}\, (1+\partial_y\tilde{\rho})\,dxdy\\
	&-\sum_{j=2}^{k}{k+1 \choose j}\int_{\Omega}\partial^{k+1}\mathbf{u}\cdot \left(\partial^j\mathbf{u}\cdot\nabla \right)\partial^{k+1-j}\mathbf{u}\, (1+\partial_y\tilde{\rho})\,dxdy\\
	&-{k+1 \choose 1}\int_{\Omega}\partial^{k+1}\mathbf{u}\cdot \left(\partial\mathbf{u}\cdot\nabla \right)\partial^{k}\mathbf{u}\, (1+\partial_y\tilde{\rho})\,dxdy-\int_{\Omega}\partial^{k+1}\mathbf{u}\cdot \left(\partial^{k+1}\mathbf{u}\cdot\nabla \right)\mathbf{u}\, (1+\partial_y\tilde{\rho})\,dxdy\\
	&=F_1+F_2+F_3.
	\end{align*}
In the first term, since $\nabla\cdot\mathbf{u}=0$ in $\Omega$ and $\mathbf{u}\cdot \mathbf{n}=0$ on $\partial\Omega$, we get:
	\begin{align*}
	F_1=&\frac{1}{2}\int_{\Omega}|\partial^{k+1}\mathbf{u}|^2\, u_2\,\partial_y^2\tilde{\rho}\,dxdy\lesssim \frac{||u_2||_{L^{\infty}} ||\partial_y^2\tilde{\rho}||_{L^{\infty}}}{1-||\partial_y\tilde{\rho}||_{L^{\infty}}}\left(\int_{\Omega}|\partial^{k+1}\mathbf{u}|^2\, (1+\partial_y\tilde{\rho})\,dxdy\right).
	\end{align*}
For the second one, we need to work a bit harder:
$$F_2\lesssim (1+||\partial_y\tilde{\rho}||_{L^{\infty}})^{1/2}\, ||\partial^{k+1}\mathbf{u} \left(1+\partial_y\tilde{\rho}\right)^{1/2}||_{L^2}\sum_{j=2}^{k}||\left(\partial^j\mathbf{u}\cdot\nabla \right)\partial^{k+1-j}\mathbf{u}||_{L^2}$$
where, for $k\geq 5$ we have that:
\begin{align*}
\sum_{j=2}^{k}||\left(\partial^j\mathbf{u}\cdot\nabla \right)\partial^{k+1-j}\mathbf{u}||_{L^2}&\leq \sum_{j=2}^{3}||\partial^j\mathbf{u}||_{L^{\infty}} ||\nabla\partial^{k+1-j}\mathbf{u}||_{L^2}+\sum_{j=4}^{k}||\partial^j\mathbf{u}||_{L^{2}} ||\nabla\partial^{k+1-j}\mathbf{u}||_{L^{\infty}}\lesssim ||\mathbf{u}||_{H^k}^2.
\end{align*}
Therefore, for $k\geq 5$ we have proved that:
$$F_2\lesssim (1+||\partial_y\tilde{\rho}||_{L^{\infty}})^{1/2}\, ||\mathbf{u}||_{H^k}^2 \left(\int_{\Omega} |\partial^{k+1}\mathbf{u}|^2 (1+\partial_y\tilde{\rho})\,dxdy\right)^{1/2}.$$
In the last one, we have that:
\begin{align*}
F_3&\lesssim ||\nabla \mathbf{u}||_{L^{\infty}}\left(\int_{\Omega}|\partial^{k+1}\mathbf{u}|^2\, (1+\partial_y\tilde{\rho})\,dxdy\right).
\end{align*}
Therefore, putting together the estimates, for $k\geq 5$ we have proved that:
\begin{align*}
I^8 &\lesssim   \, (1+||\partial_y\tilde{\rho}||_{L^{\infty}})^{1/2}\left(\int_{\Omega} |\partial^{k+1}\mathbf{u}|^2 (1+\partial_y\tilde{\rho})\,dxdy\right)^{1/2}\, ||\mathbf{u}||_{H^k}^2\\
&+\frac{||\mathbf{u}||_{H^k}\,(1+||\rho||_{H^{k+1}})}{1-||\partial_y\tilde{\rho}||_{L^{\infty}}}\left(\int_{\Omega} |\partial^{k+1}\mathbf{u}|^2 (1+\partial_y\tilde{\rho})\,dxdy\right).
\end{align*}

\noindent
\hspace{-0.9 cm} (4)\quad We note that $\nabla\cdot \mathbf{u}=0$ in $\Omega$ and $\mathbf{u}\cdot\mathbf{n}=0$ on $\partial\Omega$, so that in estimating $(\partial^{k+1}(\mathbf{u}\cdot\nabla\bar{\rho}),\partial^{k+1}\bar{\rho})$ we only have to bound terms of the form $||\partial^{j+1} \mathbf{u} \cdot\nabla \partial^{k-j}\bar{\rho} ||_{L^2}$, where $j=0,1,\ldots,k$. We use H{\"o}lder inequality to conclude then, for $k\geq 5$ that:
\begin{align*}
\sum_{j=0}^{k}||\partial^{j+1} \mathbf{u} \cdot\nabla \partial^{k-j}\bar{\rho} ||_{L^2} &\leq \left(\sum_{j=0}^{k-3}||\partial^{j+1} \mathbf{u}||_{L^{\infty}} ||\nabla \partial^{k-j}\bar{\rho} ||_{L^2}+\sum_{j=k-2}^{k-1}||\partial^{j+1} \mathbf{u}||_{L^2} ||\nabla \partial^{k-j}\bar{\rho} ||_{L^2}+||\partial^{k+1} \mathbf{u} \cdot\nabla \bar{\rho} ||_{L^2}\right)\\
&\lesssim ||\mathbf{u}||_{H^4}||\bar{\rho}||_{H^{k+1}}+||\mathbf{u}||_{H^{k}} \, ||\nabla\Pi-(0,\bar{\rho})||_{H^{k}}+||\partial^{k+1} \mathbf{u} \cdot\nabla \bar{\rho} ||_{L^2}.
\end{align*}
Here, in the last term, to close the estimate we need to proceed as follows:
\begin{align*}
||\partial^{k+1} \mathbf{u} \cdot\nabla \bar{\rho} ||_{L^2}&\leq ||\partial^{k+1}\mathbf{u} ||_{L^2}\,||\nabla \bar{\rho}||_{L^{\infty}}\\
&\leq \frac{||\bar{\rho}||_{H^{k-1}}}{(1-||\partial_y\tilde{\rho}||_{L^{\infty}})^{1/2}}\,\left(\int_{\Omega} |\partial^{k+1}\mathbf{u}|^2\,(1+\partial_y\tilde{\rho})\, dxdy\right)^{1/2}
\end{align*}
and finally, we obtain that:
$$||\partial^{k+1} \mathbf{u} \cdot\nabla \bar{\rho} ||_{L^2}\leq  ||\nabla\Pi-(0,\bar{\rho})||_{H^{k}}^2+\frac{1}{1-||\partial_y\tilde{\rho}||_{L^{\infty}}} \left(\int_{\Omega} |\partial^{k+1}\mathbf{u}|^2\,(1+\partial_y\tilde{\rho})\, dxdy \right).$$
Therefore, for $k\geq 5$ we have proved that:
\begin{align*}
I^9\lesssim& \, ||\mathbf{u}||_{H^{4}}\, ||\bar{\rho}||_{H^{k+1}}^2+ ||\bar{\rho}||_{H^{k+1}}\left(||\nabla\Pi-(0,\bar{\rho})||_{H^{k}}^2+||\mathbf{u}||_{H^k}^2\right) \nonumber \\
&+ \frac{||\bar{\rho}||_{H^{k+1}}}{1-||\partial_y\tilde{\rho}||_{L^{\infty}}} \left(\int_{\mathbb{T}\times\R} |\partial^{k+1}\mathbf{u}|^2\,(1+\partial_y\tilde{\rho})\, dxdy \right).
\end{align*}
\end{proof}

\noindent
Putting it all together, by Lemma (\ref{I6_I9}) and (\ref{Pre-estimate}), we have proved Theorem (\ref{main_energy_estimate}).

\section{Linear \& non-linear estimates}\label{Sec_6}
Our goal for this and the following section is to obtain time decay estimate for $||\mathbf{u}||_{H^4(\Omega)}(t)$. As we will see in Section (\ref{Sec_7}), to close the energy estimate and finish the proof is enought to get an integrable rate.\\

We approach the question of global well-posedness for a small initial data from a perturbative point of view, i.e., we see (\ref{System_rho}) as a non-linear perturbation of the linear problem. Therefore, a finer understanding of the linearized system allows us to improve their time span.

\subsection{The Quasi-Linearized Problem}\label{Sec_3.1}
In view of a descomposition of this system into  linear and  nonlinear part, we split the pressure as $$\Pi=\Pi^{L}+\Pi^{NL}$$
where
\begin{align}
\Pi^{L}&:=-(-\Delta)^{-1}\partial_y\bar{\rho},\nonumber\\
\Pi^{NL}&:=(-\Delta)^{-1}\text{div}\left[(\mathbf{u}\cdot\nabla)\mathbf{u}\right].\label{pressure_L_NL}
\end{align}

The linearized equation of (\ref{System_rho}) around the trivial solution $(\rho,\mathbf{u})=(0,0)$ reads
\begin{equation}\label{Linearized_rho}
\left\{
\begin{array}{rl}
\partial_t\bar{\rho}  &= - u_2 \\
\partial_t\tilde{\rho}&=0\\
\partial_t\mathbf{u}+\mathbf{u}&=-\nabla \Pi^{L}+ (0,\bar{\rho})\\
\nabla\cdot\mathbf{u}&=0 
\end{array}
\right.
\end{equation}
together with the no-slip condition $\mathbf{u}\cdot \mathbf{n}=0$ on $\partial\Omega$  and initial data $\left(\rho(0),\mathbf{u}(0)\right)\in X^k(\Omega)\times\mathbb{X}^k(\Omega)$ such that $\rho(0)=\bar{\rho}(0)+\tilde{\rho}(0)$. 
It is not difficult to prove that $\bar{\rho}$ will decay on time and $\tilde{\rho}$ will just remain bounded at linear order. In consequece, the linearized problem has a very large set of stationary (undamped) modes. 

Now, we return to our non-linear problem:
\begin{equation*}
\left\{
\begin{array}{rl}
\partial_t\bar{\rho}+\overline{\mathbf{u}\cdot\nabla\bar{\rho}}+\partial_y\tilde{\rho}\, u_2 \qquad\quad &= - u_2 \\
\partial_t\tilde{\rho}+\widetilde{\mathbf{u}\cdot\nabla\bar{\rho}}\hspace{1.4 cm}\qquad\quad  &=0\\
\partial_t\mathbf{u}+\mathbf{u}+(\mathbf{u}\cdot\nabla)\mathbf{u}+\nabla\Pi^{NL}&=-\nabla \Pi^{L}+ (0,\bar{\rho})\\
\nabla\cdot\mathbf{u}&=0 
\end{array}
\right.
\end{equation*}
together with the no-slip condition $\mathbf{u}\cdot \mathbf{n}=0$ on $\partial\Omega$. Since $\bar{\rho}$ is decaying, the term $\overline{\mathbf{u}\cdot\nabla\bar{\rho}}$ should be very small and should be controllable. The term $\partial_y\tilde{\rho}\, u_2$, however, acts like a second linear operator since $\tilde{\rho}$ is not decaying. It is conceivable that this extra linear operator could compete with the damping coming from the linear term. This makes the problem of long-time behavior more difficult. 

We solve this by, more or less, doing a second linearization around the undamped modes and showing that the stationary modes can be controlled. Then, we wish to prove decay estimates for $\bar{\rho}$ in the following system:
\begin{equation}\label{quasi_linearized_rho}
\left\{
\begin{array}{rl}
\partial_t \bar{\rho}&=-(1+\partial_y\tilde{\rho}) \,u_2\\
\partial_t \tilde{\rho}&=0\\
\partial_t\mathbf{u}+\mathbf{u}&=-\nabla \Pi^{L}+ (0,\bar{\rho})\\
\nabla\cdot\mathbf{u}&=0
\end{array}
\right.
\end{equation}
assuming that the initial data is sufficiently small. By showing that, the decay mechanism is ``stable'' with respect to the sort of perturbations which
this second linear operator introduces, we are able to keep the decay mechanism
and close a decay estimate for $\bar{\rho}$ and show that $\tilde{\rho}$, while not decaying, converges as $t\to\infty$.\\

\subsection{The Quasi-Linear Decay} Next, we prove $L^2(\Omega)$ decay estimates for the quasi-lineared system (\ref{quasi_linearized_rho}). To do it, let $w:[-1,1]\times\R^{+}\rightarrow \R^{+}$ be a measurable function. We consider the $w$-weighted $L^2(\Omega)$ space defined as
$$||f||_{L_{w}^2(\Omega)}(t):=\left(\int_{\Omega}|f(x,y)|^2 w(y,t)\,dxdy \right)^{\frac{1}{2}}$$
and their analogous Sobolev space
$$||f||_{H_{w}^k(\Omega)}(t):=||f||_{L_{w}^2(\Omega)}(t)+||\partial^k f ||_{L_{w}^2(\Omega)}(t).$$

After recall this definition, we notice that the second equation $\partial_t\tilde{\rho}(t)=0$ of (\ref{quasi_linearized_rho}) reduces to a condition at time $t=0$, i.e. $\tilde{\rho}(y,t)=\tilde{\rho}(y,0)$. However, for the non-linear problem is expected that $\tilde{\rho}$ will just remain bounded and in consequence, our goal is to solve the following system in $\Omega$:
\begin{equation}\label{new_lineal}
\left\{
\begin{array}{rl}
\partial_t \bar{\rho}&=-(1+G(y,t)) \,u_2\\
\partial_t\mathbf{u}+\mathbf{u}&=-\nabla \Pi^{L}+ (0,\bar{\rho})\\
\nabla\cdot\mathbf{u}&=0\\
\bar{\rho}|_{t=0}&=\bar{\rho}(0)\\
\mathbf{u}|_{t=0}&=\mathbf{u}(0)
\end{array}
\right.
\end{equation}
where $\bar{\rho}(0)\in X^k(\Omega)$ and $\mathbf{u}(0)\in\mathbb{X}^k(\Omega)$. Note that the auxiliary function $G(y,t),$ which plays the role of $\partial_y\tilde{\rho}(y,t),$ will be sufficiently small in the appropriate space.\\

\noindent
\textbf{Remark:} By taking the analog of Fourier transform given by the eigenfunction expansion, we cannot obtain  an exact formula for the solution because the $G(y,t)$ term mixes the effect of all the Fourier coefficients.\\

In the following sections we fix our attention in the quasi-linear problem (\ref{new_lineal}). By the previous comment we can not extract an exact formula for the solution. For this reason we need to work a little harder to obtain the decay for the quasi-linear problem. 
\begin{lemma}
Let $k\geq 2$ and $(\rho(0),\mathbf{u}(0))\in X^k(\Omega)\times\mathbb{X}^k(\Omega)$. Then, for $w:=1+G(y,t)$  and $w^{\star}:=w-\tfrac{1}{2}\partial_t w$ the solution of equation (\ref{new_lineal}) satisfies that: 
\begin{align}\label{quasi_estimate_1}
\tfrac{1}{2}\partial_t\left\lbrace ||\mathbf{u}||_{H^k_{w}(\Omega)}^2(t)+ ||\bar{\rho}||_{H^k(\Omega)}^2(t)  \right\rbrace &\leq -||\mathbf{u}||_{H^k_{w^{\star}}(\Omega)}^2(t)\\ \nonumber
&+C\,||G'||_{H^{k}([-1,1])}(t) \, ||\bar{\rho}||_{H^{k-1}(\Omega)}(t)\,||u_2||_{H^k(\Omega)}(t)
\end{align}
and
\begin{align}\label{quasi_estimate_2}
\tfrac{1}{2}\partial_t\left\lbrace ||\partial_t \mathbf{u}||_{H^k(\Omega)}^2(t)+ ||u_2||_{H^k_{w}(\Omega)}^2(t)  \right\rbrace &\leq -||\partial_t\mathbf{u}||_{H^k(\Omega)}^2(t)+||u_2||_{H^k_{w-w^{\star}}(\Omega)}^2(t)\\ \nonumber
&+C\,||G'||_{H^{k-1}([-1,1])}(t)\, ||u_2||_{H^{k}(\Omega)}(t)\,||\partial_t u_2||_{H^k(\Omega)}(t) \quad 
\end{align}
for some positive constant $C$.
\end{lemma}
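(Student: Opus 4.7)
The plan is to derive both inequalities by standard weighted $L^2$-energy arguments, performed first at the $L^2$ level and then at the $\dot H^k$ level (via $\partial^s$ with $|s|\le k$), and summing. The structure of the quasi-linearized system makes both derivations largely parallel; the delicate points are the integration by parts producing the weight-dependent pressure term, and a commutator $[\partial^s,w]u_2$ coming from the fact that $w=1+G(y,t)$ depends on $t,y$.

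For (\ref{quasi_estimate_1}), I would apply $\partial^s$ with $|s|\le k$ to both the $\bar\rho$ and $\mathbf{u}$ equations, then pair the first identity with $\partial^s\bar\rho$ in $L^2(\Omega)$ and the second with $w\,\partial^s\mathbf{u}$ in $L^2(\Omega)$. Using $\pa_t\mathbf{u}=-\mathbf{u}-\nabla\Pi^L+(0,\bar\rho)$ together with the identity $\frac{1}{2}\pa_t(w|\partial^s\mathbf{u}|^2)=w\,\partial^s\mathbf{u}\cdot\pa_t\partial^s\mathbf{u}+\frac{1}{2}|\partial^s\mathbf{u}|^2\pa_t w$, and combining $-\|\partial^s\mathbf{u}\|^2_{L^2_w}+\frac{1}{2}\int|\partial^s\mathbf{u}|^2\pa_t w=-\|\partial^s\mathbf{u}\|^2_{L^2_{w^{\star}}}$ by definition of $w^{\star}$, one obtains the desired damping term. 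The pressure contribution $-\int\partial^s\mathbf{u}\cdot\partial^s\nabla\Pi^L\,w\,dxdy$ is then integrated by parts: since $w=w(y,t)$, $\nabla\cdot\mathbf{u}=0$ in $\Omega$, $\mathbf{u}\cdot\mathbf{n}=0$ on $\partial\Omega$, and $\partial^s u_2\in X^{k-|s|}$, $\partial^s\Pi^L\in Y^{k-|s|}$ (from the elliptic problem solved by $\Pi^L$), the boundary contributions vanish and the divergence cancellation leaves only $\int\partial^s u_2\,\partial^s\Pi^L\,\partial_y w\,dxdy$.

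The buoyancy contribution $\int\partial^s u_2\,\partial^s\bar\rho\,w\,dxdy$ from the $(0,\bar\rho)$ term is cancelled exactly by the corresponding contribution from $-\int\partial^s\bar\rho\,\partial^s(w u_2)\,dxdy$ arising from the transport equation $\pa_t\bar\rho=-w\,u_2$, up to the commutator $-\int\partial^s\bar\rho\,[\partial^s,w]u_2\,dxdy$. Summing over $|s|\le k$ reduces (\ref{quasi_estimate_1}) to the bound
\begin{equation*}
\sum_{|s|\le k}\left|\int\partial^s u_2\,\partial^s\Pi^L\,\partial_y w\,dxdy\right|+\sum_{|s|\le k}\left|\int\partial^s\bar\rho\,[\partial^s,w]u_2\,dxdy\right|\lesssim \|G'\|_{H^k}\,\|\bar\rho\|_{H^{k-1}}\,\|u_2\|_{H^k}.
\end{equation*}
For the first sum, H\"older together with the Sobolev embedding in one dimension gives $\|\partial_y w\|_{L^\infty}\lesssim \|G'\|_{H^1([-1,1])}\lesssim \|G'\|_{H^k}$, while the elliptic estimate for $\Pi^L=-(-\Delta)^{-1}\pa_y\bar\rho$ with Neumann-type boundary conditions (which hold because $\bar\rho|_{\partial\Omega}=0$) yields $\|\Pi^L\|_{H^k}\lesssim\|\bar\rho\|_{H^{k-1}}$. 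For the commutator sum, $[\partial^s,w]u_2$ expands into terms of the form $\partial_y^j w\cdot \partial^{s-j}u_2$ with $j\ge 1$; a standard Kato-Ponce type estimate combined with the Sobolev embedding bounds it by $\|G'\|_{H^{k-1}}\|u_2\|_{H^k}$, and multiplication by $\partial^s\bar\rho$ followed by H\"older reproduces the right-hand side.

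For (\ref{quasi_estimate_2}), I would first differentiate the velocity equation in time, using $\pa_t\bar\rho=-w u_2$ to write $\pa_t^2\mathbf{u}+\pa_t\mathbf{u}=-\nabla\pa_t\Pi^L-(0,w\,u_2)$. The analogous $\partial^s$-level pairings are then carried out in $L^2(\Omega)$ with $\partial^s\pa_t\mathbf{u}$ (unweighted) and in $L^2(\Omega)$ with $w\,\partial^s u_2$, and the pressure contribution $-\int\partial^s\pa_t\mathbf{u}\cdot\partial^s\nabla\pa_t\Pi^L\,dxdy$ vanishes after integration by parts since $\pa_t\mathbf{u}$ is still divergence-free with no-penetration. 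Combining with $\frac{1}{2}\int\pa_t w\,|\partial^s u_2|^2=\|\partial^s u_2\|^2_{L^2_{w-w^{\star}}}$ leaves the single commutator term $-\int\partial^s\pa_t u_2\,[\partial^s,w]u_2\,dxdy$, which is bounded by Cauchy-Schwarz and the commutator estimate $\|[\partial^s,w]u_2\|_{L^2}\lesssim \|G'\|_{H^{k-1}}\|u_2\|_{H^k}$, giving exactly (\ref{quasi_estimate_2}). The main obstacle in both estimates is the bookkeeping of boundary contributions in each integration by parts, but this is tamed once and for all by the invariance of $X^k$ and $Y^k$ under the flow, which guarantees $\bar\rho(t)\in X^k$, $u_2(t)\in X^k$, $u_1(t)\in Y^k$, $\Pi^L(t)\in Y^k$ and thus makes all boundary terms vanish.
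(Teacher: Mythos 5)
Your overall strategy coincides with the paper's: weighted energy identities at the $L^2$ and $\dot H^k$ levels, substitution of $\partial_t\bar\rho=-w\,u_2$ to produce the exact cancellation with the buoyancy term, and reduction to a commutator $\partial^k(w\,u_2)-w\,\partial^k u_2$. (Incidentally, you are more careful than the paper on one point: you retain the weighted pressure contribution $\int_\Omega \partial^k u_2\,\partial^k\Pi^L\,\partial_y w\,dxdy$, which does not vanish when $w$ depends on $y$, and your bound for it is of the admissible form; the paper's displayed identity silently drops it.)

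However, there is a genuine gap in your treatment of the commutator term in the first estimate. You bound
\begin{equation*}
\Big|\int_\Omega \partial^k\bar\rho\,\big[\partial^k(w\,u_2)-w\,\partial^k u_2\big]\,dxdy\Big|
\le \|\partial^k\bar\rho\|_{L^2}\,\|[\partial^k,w]u_2\|_{L^2}
\lesssim \|G'\|_{H^{k-1}}\,\|\bar\rho\|_{H^{k}}\,\|u_2\|_{H^{k}},
\end{equation*}
which carries $\|\bar\rho\|_{H^{k}}$, not the $\|\bar\rho\|_{H^{k-1}}$ appearing in \eqref{quasi_estimate_1}; your claim that this "reproduces the right-hand side" is therefore not correct. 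The distinction is not cosmetic: the whole point of the $H^{k-1}$ norm is that, immediately after this lemma, the paper absorbs this term into the damping via $\|\bar\rho\|_{H^{k-1}}\le\|\nabla\Pi^{L}-(0,\bar\rho)\|_{H^{k}}$ (inequality \eqref{rho_R1rho} loses one derivative), and $\|\bar\rho\|_{H^k}$ is not a damped quantity, so the decay argument would not close with your version. The missing step is one further integration by parts, moving a derivative off $\partial^k\bar\rho$ and onto the bracket,
\begin{equation*}
-\int_{\Omega} \partial^k \bar{\rho}\,\big[\partial^k(u_2\,w)-w\,\partial^k u_2 \big]\,dxdy=\int_{\Omega}\partial^{k-1}\bar{\rho}\,[\partial^{k+1},w]u_2\,dxdy-\int_{\Omega} \partial^{k-1}\bar{\rho}\,\partial w\,\partial^k u_2\,dxdy,
\end{equation*}
after which the commutator estimate \eqref{Commutator} yields the stated bound with $\|\bar\rho\|_{H^{k-1}}$, at the price of one extra derivative on the weight — which is exactly why \eqref{quasi_estimate_1} carries $\|G'\|_{H^{k}}$ while \eqref{quasi_estimate_2} only needs $\|G'\|_{H^{k-1}}$, an asymmetry your write-up does not account for. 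Your derivation of the second estimate is fine and matches the paper's.
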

\begin{proof}
We start with (\ref{quasi_estimate_1}). Using the incompressibility and the boundary conditions it is clear that:
$$\tfrac{1}{2}\partial_t ||\mathbf{u}||_{H^k_{w}(\Omega)}^2=-||\mathbf{u}||_{H^k_{w^{\star}}(\Omega)}^2+\int_{\Omega}\bar{\rho}\, u_2\,w\,dxdy+\int_{\Omega}\partial^k \bar{\rho}\, \partial^k u_2\,w\,dxdy.$$
Now, as $\partial_t \bar{\rho}=-(1+G(y,t)) \,u_2$, we have that:
$$\tfrac{1}{2}\partial_t ||\mathbf{u}||_{H^k_w(\Omega)}^2=-||\mathbf{u}||_{H^k_{w^{\star}}(\Omega)}^2-\int_{\Omega}\bar{\rho}\, \partial_t\bar{\rho}\,dxdy-\int_{\Omega}\partial^k \bar{\rho}\, \partial^k(\partial_t\bar{\rho}\,w^{-1})\,w\,dxdy \pm \int_{\Omega}\partial^k \bar{\rho}\, \partial^k\partial_t\bar{\rho}\,dxdy $$
and we arrive to:
$$\tfrac{1}{2}\partial_t\left\lbrace ||\mathbf{u}||_{H^k_{w}(\Omega)}^2+ ||\bar{\rho}||_{H^k(\Omega)}^2  \right\rbrace =-||\mathbf{u}||_{H^k_{w^{\star}}(\Omega)}^2-\int_{\Omega} \partial^k \bar{\rho}\left[\partial^k(u_2\,w)-w\,\partial^k u_2 \right]\,dxdy.$$
Applying integration by parts in the last term, we obtain that:
$$-\int_{\Omega} \partial^k \bar{\rho}\left[\partial^k(u_2\,w)-w\,\partial^k u_2 \right]\,dxdy=\int_{\Omega}\partial^{k-1}\bar{\rho}\,[\partial^{k+1},w]u_2\,dxdy-\int_{\Omega} \partial^{k-1}\bar{\rho}\,\partial w\,\partial^k u_2\,dxdy$$
and using the commutator estimate (\ref{Commutator}) we have the bound:
\begin{align*}
-\int_{\Omega} \partial^k \bar{\rho}\left[\partial^k(u_2\,w)-w\,\partial^k u_2 \right]\,dxdy &\leq ||\partial^{k-1}\bar{\rho}||_{L^2}\left(||[\partial^{k+1},w]u_2||_{L^2}+||\partial w||_{L^{\infty}} ||\partial^k u_2||_{L^2}\right)\\
&\lesssim ||\partial^{k-1}\bar{\rho}||_{L^2}\left(||\partial w||_{L^{\infty}} ||\partial^k u_2||_{L^2}+||\partial^{k+1}w||_{L^{2}}||u_2||_{L^{\infty}}\right).
\end{align*}
Applying the Sobolev embedding in the previous inequality, we have for $k\geq 2$ that:
$$-\int_{\Omega} \partial^k \bar{\rho}\left[\partial^k(u_2\,w)-w\,\partial^k u_2 \right]\,dxdy\lesssim ||\partial w||_{H^{k}([-1,1])} ||u_2||_{H^k(\Omega)} ||\bar{\rho}||_{H^{k-1}(\Omega)}$$
and, in consequence, we have proved the first inequality.

To prove (\ref{quasi_estimate_2}) we proceed as before, using  the incompressibility and the boundary conditions to get:
$$\tfrac{1}{2}\partial_t ||\partial_t \mathbf{u}||_{H^k(\Omega)}^2=-||\partial_t\mathbf{u}||_{H^k(\Omega)}^2+\int_{\Omega}\partial_t\bar{\rho}\, \partial_t u_2\,dxdy+\int_{\Omega}\partial^k\partial_t \bar{\rho}\, \partial^k\partial_t u_2\,dxdy.$$
Again, as $\partial_t \bar{\rho}=-(1+G(y,t)) \,u_2$, we have that:
$$\tfrac{1}{2}\partial_t ||\partial_t\mathbf{u}||_{H^k(\Omega)}^2=-||\partial_t\mathbf{u}||_{H^k(\Omega)}^2-\int_{\Omega}w\,u_2\, \partial_t u_2\,dxdy-\int_{\Omega}\partial^k (w\,u_2)\, \partial^k\partial_t u_2\,dxdy\pm \int_{\Omega}w\,\partial^k u_2\, \partial^k\partial_t u_2\,dxdy$$
and we arrive to:
$$\tfrac{1}{2}\partial_t\left\lbrace ||\partial_t\mathbf{u}||_{H^k(\Omega)}^2+||u_2||_{H^k_w(\Omega)}^2\right\rbrace= -||\partial_t\mathbf{u}||_{H^k(\Omega)}^2+||u_2||_{H^k_{w-w^{\star}}(\Omega)}^2-\int_{\Omega} \partial_t\partial^k u_2 \left[\partial^k(u_2\,w)-w\,\partial^k u_2 \right]\,dxdy.$$
Using the commutator estimate (\ref{Commutator}) and the Sobolev embedding  in the last term, for $k\geq 2$ we have that:
\begin{align*}
-\int_{\Omega} \partial_t\partial^k u_2 \left[\partial^k(u_2\,w)-w\,\partial^k u_2 \right]\,dxdy &\lesssim ||\partial_t\partial^k u_2||_{L^2}\left(||\partial w||_{L^{\infty}} ||\partial^{k-1} u_2||_{L^2}+||\partial^{k}w||_{L^{2}}||u_2||_{L^{\infty}}\right)\\
&\lesssim ||\partial w||_{H^{k-1}([-1,1])} ||u_2||_{H^k(\Omega)} ||\partial_t u_2||_{H^k(\Omega)}
\end{align*}
and, in consequence, we have proved the second inequality.
\end{proof}
Plugging together (\ref{quasi_estimate_1}) and (\ref{quasi_estimate_2}) and using the inequality $||\bar{\rho}||_{H^{k-1}(\Omega)}(t)\leq ||\nabla\Pi^{L}-(0,\bar{\rho})||_{H^{k}(\Omega)}(t)$ we get:
\begin{align*}
\tfrac{1}{2}\partial_t &\left\lbrace ||\mathbf{u}||_{H^k_{w}(\Omega)}^2(t)+ ||\bar{\rho}||_{H^k(\Omega)}^2(t)  +||\partial_t \mathbf{u}||_{H^k(\Omega)}^2(t)+ ||u_2||_{H^k_{w}(\Omega)}^2(t)\right\rbrace \leq -\left(||\mathbf{u}||_{H^k_{w^{\star}}(\Omega)}^2(t)+||\partial_t\mathbf{u}||_{H^k(\Omega)}^2(t)\right) \nonumber \\
&  +||u_2||_{H^k_{w-w^{\star}}(\Omega)}^2(t) +\frac{C}{2}\,||\partial w||_{H^{k}([-1,1])}(t) \,\left( ||\nabla\Pi^{L}-(0,\bar{\rho})||_{H^{k}(\Omega)}^2(t)+2\,||\mathbf{u}||_{H^k(\Omega)}^2(t)+||\partial_t \mathbf{u}||_{H^{k}(\Omega)}^2(t)\right).
\end{align*}
Therefore, we are in position to prove the main result of this section. To do it, we consider some smallness assumptions over the auxiliary function $G$.
\begin{lemma}
Let $k\geq 2$ and $(\rho(0),\mathbf{u}(0))\in X^k(\Omega)\times\mathbb{X}^k(\Omega)$. Assume that $G:[-1,1]\times\R^{+}\rightarrow \R$ satisfies that  $G\in L^{\infty}(0,\infty;H^{k+1}([-1,1]))$ and $\partial_t G\in L^{\infty}(0,\infty;L^{\infty}([-1,1]))$ with:
$$\max\{||G||_{H^{k+1}([-1,1])}(t),||\partial_t G||_{L^{\infty}([-1,1])}(t)\}\leq \epsilon \qquad \text{for all} \quad t\geq 0.$$
Then, for $w:=1+G(y,t)$  and $w^{\star}:=w-\tfrac{1}{2}\partial_t w$ the solution of equation (\ref{new_lineal}) satisfies that:  
\begin{equation}\label{quasi-linear_decay}
\tfrac{1}{2}\partial_t \left\lbrace ||\mathbf{u}||_{H^k_{w}(\Omega)}^2(t)+ ||\bar{\rho}||_{H^k(\Omega)}^2(t)  +||\partial_t \mathbf{u}||_{H^k(\Omega)}^2(t)+ ||u_2||_{H^k_{w}(\Omega)}^2(t) \right\rbrace \lesssim -||\partial_t\mathbf{u}+\mathbf{u}||_{H^k(\Omega)}^2(t)
\end{equation}
\end{lemma}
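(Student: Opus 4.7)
The plan is to start from the combined inequality already displayed immediately before the statement, namely
\begin{align*}
\tfrac{1}{2}\partial_t &\left\lbrace ||\mathbf{u}||_{H^k_{w}}^2+ ||\bar{\rho}||_{H^k}^2  +||\partial_t \mathbf{u}||_{H^k}^2+ ||u_2||_{H^k_{w}}^2\right\rbrace \\
&\leq -\left(||\mathbf{u}||_{H^k_{w^{\star}}}^2+||\partial_t\mathbf{u}||_{H^k}^2\right)+||u_2||_{H^k_{w-w^{\star}}}^2\\
&\phantom{\leq} +\tfrac{C}{2}\,||\partial w||_{H^{k}([-1,1])} \,\left( ||\nabla\Pi^{L}-(0,\bar{\rho})||_{H^{k}}^2+2\,||\mathbf{u}||_{H^k}^2+||\partial_t \mathbf{u}||_{H^{k}}^2\right),
\end{align*}
and to absorb every positive contribution into the damping terms by exploiting the smallness of $G$. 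The only novel ingredient I will use is the trivial observation that, since the linearized momentum equation reads $\partial_t \mathbf{u}+\mathbf{u}=-[\nabla\Pi^{L}-(0,\bar\rho)]$, we have the pointwise identity
$$||\nabla\Pi^{L}-(0,\bar\rho)||_{H^{k}}(t)=||\partial_t\mathbf{u}+\mathbf{u}||_{H^{k}}(t),$$
so the pressure term on the right hand side is exactly the quantity we want to see in the final damping estimate.

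First I would use the smallness hypothesis $||G||_{H^{k+1}}+||\partial_t G||_{L^\infty}\leq \epsilon$ to compare weighted and unweighted norms. Since $w=1+G$ and $w^\star=w-\tfrac{1}{2}\partial_t w = 1+G-\tfrac12\partial_t G$, for $\epsilon$ small enough (say $\epsilon\leq 1/4$) one has $\tfrac12\leq w^\star, w\leq 2$ uniformly in $(y,t)$, hence
$$||\mathbf{u}||_{H^k_{w^\star}}^2\geq \tfrac12\,||\mathbf{u}||_{H^k}^2,\qquad ||u_2||_{H^k_{w-w^\star}}^2 = \tfrac12\int \partial_t G\,|u_2|^2\,dxdy\leq C\epsilon\,||\mathbf{u}||_{H^k}^2,$$
and $||\partial w||_{H^k([-1,1])}=||\partial_y G||_{H^k}\leq \epsilon$. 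Substituting these bounds, replacing the pressure term by $||\partial_t\mathbf{u}+\mathbf{u}||_{H^k}^2$, and using the elementary inequality $||\partial_t \mathbf{u}+\mathbf{u}||_{H^k}^2\leq 2(||\partial_t\mathbf{u}||_{H^k}^2+||\mathbf{u}||_{H^k}^2)$, the master inequality becomes
$$\tfrac{1}{2}\partial_t\{\,\cdots\,\}\leq -\tfrac12\,||\mathbf{u}||_{H^k}^2-||\partial_t\mathbf{u}||_{H^k}^2 + C\epsilon\,\left(||\mathbf{u}||_{H^k}^2+||\partial_t\mathbf{u}||_{H^k}^2\right).$$

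Choosing $\epsilon$ small enough that $C\epsilon\leq 1/4$, the $\epsilon$-terms are absorbed into the negative ones, giving
$$\tfrac{1}{2}\partial_t\{\,\cdots\,\}\leq -c\,\left(||\mathbf{u}||_{H^k}^2+||\partial_t\mathbf{u}||_{H^k}^2\right)$$
for some universal $c>0$. Finally, applying the reverse triangle inequality $||\mathbf{u}||_{H^k}^2+||\partial_t\mathbf{u}||_{H^k}^2\geq \tfrac12\,||\partial_t\mathbf{u}+\mathbf{u}||_{H^k}^2$ closes the estimate. The main obstacle is simply bookkeeping: ensuring that the constant $C$ coming from the commutator estimate and the various weighted/unweighted comparisons is compatible with the prescribed smallness threshold on $\epsilon$; once $\epsilon$ is chosen small enough (depending only on the Sobolev constant for $k\geq 2$), every error term is quantitatively dominated.
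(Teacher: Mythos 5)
Your proposal is correct and follows essentially the same route as the paper: both start from the combined inequality preceding the statement, use the pointwise bounds $|w^{\star}-1|\leq\tfrac{3}{2}\epsilon$ and $|w-w^{\star}|\leq\tfrac{1}{2}\epsilon$ to pass between weighted and unweighted norms, absorb the $\epsilon$-multiplied remainder into the damping, and finish with a Young-type inequality to replace $-\bigl(||\mathbf{u}||_{H^k}^2+||\partial_t\mathbf{u}||_{H^k}^2\bigr)$ by $-||\partial_t\mathbf{u}+\mathbf{u}||_{H^k}^2$. The only cosmetic difference is that where the paper cites the linear version of Lemma \ref{lineal_con_theta} to handle the $||\nabla\Pi^{L}-(0,\bar{\rho})||_{H^k}^2$ term, you inline the same content via the identity $\partial_t\mathbf{u}+\mathbf{u}=-\bigl(\nabla\Pi^{L}-(0,\bar{\rho})\bigr)$ and the triangle inequality, which is exactly the mechanism underlying that lemma.
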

\begin{proof}
First of all, due to the smallness conditions over $G$, for all $(y,t)\in [-1,1]\times\R^{+}$ we have that:
$$1-\tfrac{3}{2}\epsilon \leq|w^{\star}(y,t)|\leq 1+\tfrac{3}{2}\epsilon \qquad \text{and} \qquad |w(y,t)-w^{\star}(y,t)|\leq \tfrac{1}{2}\epsilon.$$ 
In consequence, we get:
$$-\left(||\mathbf{u}||_{H^k_{w^{\star}}(\Omega)}^2(t)+||\partial_t\mathbf{u}||_{H^k(\Omega)}^2(t)\right) +||u_2||_{H^k_{w-w^{\star}}(\Omega)}^2(t)\lesssim -\left(||\mathbf{u}||_{H^k(\Omega)}^2(t)+||\partial_t\mathbf{u}||_{H^k(\Omega)}^2(t) \right).$$
Now, considering the linear version of the Lemma (\ref{lineal_con_theta}) we have that there exists $0<\tilde{C}<1$ such that:
$$-\left(||\mathbf{u}||_{H^{k}(\Omega)}^2(t)+||\partial_t \mathbf{u}||_{H^k(\Omega)}^2(t)\right)\leq  -\tilde{C}\left(||\mathbf{u}||_{H^k(\Omega)}^2(t)+||-\nabla\Pi^{L}+(0,\bar{\rho})||_{H^k(\Omega)}^2(t)+||\partial_t \mathbf{u}||_{H^k(\Omega)}^2(t)\right).$$
Hence, thanks to the fact that $||G||_{H^{k+1}([-1,1])}(t)$ is small enough for all time, we arrive to:
$$\tfrac{1}{2}\partial_t \left\lbrace||\mathbf{u}||_{H^k_{w}(\Omega)}^2(t)+ ||\bar{\rho}||_{H^k(\Omega)}^2(t)  +||\partial_t \mathbf{u}||_{H^k(\Omega)}^2(t)+ ||u_2||_{H^k_{w}(\Omega)}^2(t)\right\rbrace \leq -C^{\star} \left(||\mathbf{u}||_{H^k(\Omega)}^2(t)+||\partial_t\mathbf{u}||_{H^k(\Omega)}^2(t)\right)$$
for some $0<C^{\star}<\tilde{C}<1$. Hence, by Young's inequality it is clear that there exists $0<\gamma<1$ such that:
\begin{align*}
\tfrac{1}{2}\partial_t \left\lbrace ||\mathbf{u}||_{H^k_{w}(\Omega)}^2(t)+ ||\bar{\rho}||_{H^k(\Omega)}^2(t)  +||\partial_t \mathbf{u}||_{H^k(\Omega)}^2(t)+ ||u_2||_{H^k_{w}(\Omega)}^2(t)\right\rbrace \leq -C^{\star} \left(||\mathbf{u}||_{H^k(\Omega)}^2(t)+||\partial_t\mathbf{u}||_{H^k(\Omega)}^2(t)\right)\\
\leq -C^{\star}\left(||\mathbf{u}+\partial_t\mathbf{u}||_{H^k(\Omega)}^2(t)+2\,||\partial_t\mathbf{u}||_{H^k(\Omega)}^2(t)  \right)+2\,C^{\star}\left(\frac{\gamma\,||\mathbf{u}+\partial_t\mathbf{u}||_{H^k(\Omega)}^2(t)}{2}+\frac{||\partial_t\mathbf{u}||_{H^k(\Omega)}^2(t)}{2\,\gamma} \right).
\end{align*}
Considering for simplicity $\gamma=1/2$, we have proved our goal.
\end{proof}

\subsubsection{The Stream Formulation}
Because of the incompressibility of the flow $\nabla\cdot\mathbf{u}=0$,  we write the velocity as the gradient perpendicular of a \textit{stream function} $\psi^L$, i.e., 
\begin{equation}\label{Stream_1}
\mathbf{u}=\nabla^{\perp}\psi^{L}
\end{equation}
with $\nabla^{\perp}\equiv (-\partial_y,\partial_x)$. Then, computing the \emph{curl} of the evolution equation of the velocity, we get the following Poisson equation:
\begin{equation}\label{Stream_2}
\Delta\left(\partial_t\psi^{L}+\psi^{L}\right)=\partial_x\bar{\rho}.
\end{equation}
Taking in account (\ref{Stream_1}) and the no-slip condition we obtain the boundary condition:
$$\partial_x\psi^{L}|_{\partial\Omega}=0.$$
Thus, we need to impose $\psi^L|_{\{y=\pm 1\}}=b_{\pm}$ where $b_{+}$ could be, in principle, different from $b_{-}$. However the periodicity in the $x$-variable of $\Pi$ force to take $b_{+}=b_{-}$, and since we are only interested in the derivatives of $\psi^L$ we will take $b_{\pm}=0$.\\

To sum up, in order to close the system of equations, we first solve
\begin{equation} \label{Poisson_psi}
\left\{
\begin{array}{rlrr}
\Delta \left(\partial_t\psi^{L}+\psi^{L}\right)&= \partial_x\bar{\rho}  \quad &\text{in}\quad &\Omega\,\\
\partial_t\psi^{L}+\psi^{L}\hspace{0.17 cm}&=0 \quad &\text{on} \quad &\partial\Omega.
\end{array}
\right.
\end{equation}
and then, we will use the \textit{stream formulation} to recover the velocity field $\mathbf{u}=\nabla^{\perp}\psi^{L}$.
To solve (\ref{Poisson_psi}) with $\rho\in X^k(\Omega)$ and $\mathbf{u}\in\mathbb{X}^k(\Omega)$. Using the orthonormal basis introduced in Section (\ref{Sec_Basis}) and their properties, we can write the velocity in terms of the ``Fourier coefficients'' of $\bar{\rho}$.

\begin{lemma}
Let $\rho(t)\in X^k(\Omega)$. The solution of the Poisson's problem
\begin{equation*}
\left\{
\begin{array}{rllr}
\Delta (\partial_t\psi^L+\psi^L)&= \pa_x \bar{\rho}  \qquad &\text{in}\quad &\Omega\,\,\\
\partial_t\psi^L+\psi^L &=0 \qquad &\text{on}\quad & \partial\Omega
\end{array}
\right.
\end{equation*}
satisfies that $\left(\partial_t\psi^L+\psi^L\right)(t)\in X^{k+1}(\Omega)$ with $||\partial_t\psi^L+\psi^L||_{H^{k+1}(\Omega)}(t)\lesssim ||\bar{\rho}||_{H^k(\Omega)}(t)$ and  its Fourier expansion is given by
\begin{equation}\label{expresion_psi}
\left(\partial_t\psi^L+\psi^L\right)(x,y,t)=\, \sum_{p\in\Z}\sum_{q\in\N}\left( \frac{(-1)\,ip}{p^2+ \left(q\tfrac{\pi}{2}\right)^2 }\right)\, \mathcal{F}_{\omega}[\bar{\rho}(t)](p,q)\, \omega_{p,q}(x,y).
\end{equation}
\end{lemma}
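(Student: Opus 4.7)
The plan is to reduce the statement to a stationary elliptic Dirichlet problem and solve it by eigenfunction expansion in the basis $\{\omega_{p,q}\}_{(p,q)\in\Z\times\N}$ introduced in Section \ref{Sec_Basis}. Setting $\phi(\cdot,t):=(\pa_t\psi^L+\psi^L)(\cdot,t)$, the system (\ref{Poisson_psi}) becomes, at every fixed $t$, the Dirichlet problem $\Delta\phi=\pa_x\bar\rho$ in $\Omega$ with $\phi|_{\pa\Omega}=0$. The key observation is that each basis element $\omega_{p,q}=a_p\,b_q$ is simultaneously an eigenfunction of $-\Delta$ with eigenvalue $p^2+(q\tfrac{\pi}{2})^2$ and of $\pa_x$ with eigenvalue $ip$, and already lies in $X^{k+1}(\Omega)$: indeed $\pa_y^{2n}b_q=(-1)^n(q\tfrac{\pi}{2})^{2n}b_q$ combined with $b_q(\pm 1)=0$ implies that every even $y$-derivative of $\omega_{p,q}$ vanishes on $\pa\Omega$.

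\medskip

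First I would expand $\bar\rho(t)\in X^k(\Omega)$ via (\ref{def_eigenfunction_expansion}); since $\bar\rho$ has zero horizontal mean one has $\mathcal{F}_\omega[\bar\rho](0,q)=0$, so only the modes with $p\in\Z\setminus\{0\}$ contribute. Inserting the ansatz $\phi(t)=\sum_{p,q}A_{p,q}(t)\omega_{p,q}$ into $\Delta\phi=\pa_x\bar\rho$ and equating basis coefficients forces
$$A_{p,q}(t)=\frac{-ip}{p^2+(q\pi/2)^2}\,\mathcal{F}_\omega[\bar\rho(t)](p,q),$$
which is precisely (\ref{expresion_psi}). The Dirichlet condition is automatic because the whole series lives in $X^{k+1}(\Omega)$, and uniqueness of the weak solution of $\Delta\phi=\pa_x\bar\rho$ with $\phi|_{\pa\Omega}=0$ (Lax--Milgram) guarantees that this candidate is indeed $\phi(t)$.

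\medskip

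The only remaining task, and the sole genuinely technical point, is the regularity bound $\|\phi\|_{H^{k+1}(\Omega)}^2\lesssim \|\bar\rho\|_{H^k(\Omega)}^2$. By the Parseval-type identity of Lemma \ref{properties_basis}, this reduces to verifying, for every $s_1,s_2\geq 0$ with $s_1+s_2\leq k+1$, every $p\in\Z\setminus\{0\}$ and every $q\in\N$, the pointwise multiplier estimate
$$|p|^{2s_1}\,\bigl|q\tfrac{\pi}{2}\bigr|^{2s_2}\,\frac{p^2}{\bigl(p^2+(q\pi/2)^2\bigr)^2}\;\lesssim\; |p|^{2a}\,\bigl|q\tfrac{\pi}{2}\bigr|^{2b}\quad\text{for some }a,b\geq 0\text{ with }a+b\leq k.$$
The main, though mild, obstacle is the case analysis behind this inequality. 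When $s_1\geq 1$, I would use $(p^2+(q\pi/2)^2)^2\geq p^4$ to absorb two powers of $p$, reaching $(a,b)=(s_1-1,s_2)$. When $s_1=0$ and $s_2\geq 1$, the AM--GM bound $(p^2+(q\pi/2)^2)^2\geq 4p^2(q\pi/2)^2$ lets me trade a factor $p^2$ for $(q\pi/2)^{-2}$, landing on $(0,s_2-1)$. The residual case $s_1=s_2=0$ is trivial from $|p|\geq 1$. Summing the resulting pointwise inequality against $|\mathcal{F}_\omega[\bar\rho]|^2$ and reapplying Lemma \ref{properties_basis} delivers the claimed estimate, which by continuity of the trace operator also confirms $\phi(t)\in X^{k+1}(\Omega)$.
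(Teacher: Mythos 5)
Your argument is correct and is exactly the intended one: the paper itself gives no proof here but defers to Section 3 of \cite{Castro-Cordoba-Lear}, and the eigenfunction expansion in the basis $\{\omega_{p,q}\}$ — diagonalizing $\Delta$ and $\partial_x$ simultaneously, reading off the coefficients, and then running the multiplier estimate $|p|^{2s_1}|q\tfrac{\pi}{2}|^{2s_2}\,p^2\,(p^2+(q\tfrac{\pi}{2})^2)^{-2}\lesssim |p|^{2a}|q\tfrac{\pi}{2}|^{2b}$ with $a+b\le k$ through the Parseval identity of Lemma \ref{properties_basis} — is precisely how that reference proceeds. Your case split ($s_1\ge 1$ absorbing $p^2$ via $(p^2+(q\pi/2)^2)^2\ge p^4$; $s_1=0$, $s_2\ge 1$ via AM--GM; the trivial $L^2$ case using $|p|\ge 1$ on the support of the coefficients) is complete, and the observation that each $\omega_{p,q}$ already satisfies the even-order boundary conditions, combined with trace continuity, correctly yields $\left(\partial_t\psi^L+\psi^L\right)(t)\in X^{k+1}(\Omega)$.
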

\begin{proof}
See Section \Blue{3} of \cite{Castro-Cordoba-Lear}.
\end{proof}
In particular, using the \textit{stream formulation} we can rewrite $\partial_t\mathbf{u}+\mathbf{u}=\nabla^{\perp}(\pa_t\psi^{L}+\psi^{L})$ 
where $\pa_t\psi^{L}+\psi^{L}$ is the solution of (\ref{Poisson_psi}) given by (\ref{expresion_psi}). Then, we have that:
\begin{align}\label{meter_N}
||\partial_t\mathbf{u}+\mathbf{u}||_{L^2(\Omega)}^2&=(\Delta(\pa_t\psi^{L}+\psi^{L}),\pa_t\psi^{L}+\psi^{L})=(\partial_x\bar{\rho},\pa_t\psi^{L}+\psi^{L})\nonumber\\
&=\sum_{p\in\Z}\sum_{q\in\N}\left( \frac{p^2}{p^2+ \left(q\tfrac{\pi}{2}\right)^2 }\right)\, \big|\mathcal{F}_{\omega}[\bar{\rho}(t)](p,q)\big|^2
\end{align}

\begin{lemma}Let $\alpha\in\N$ and $N:\R^{+}\longrightarrow \R^{+}$. Then, the following lower bound holds:
\begin{equation}{\label{lower_bound}}
||\partial_t\mathbf{u}+\mathbf{u}||_{L^2(\Omega)}^2(t)\geq \frac{1}{N(t)}||\bar{\rho}||_{L^2(\Omega)}^2(t)-\frac{1}{N(t)^{1+\alpha}}||\bar{\rho}||_{H^{\alpha}(\Omega)}^2(t)
\end{equation}
\end{lemma}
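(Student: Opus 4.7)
The plan is to exploit the explicit eigenfunction representation of $\|\partial_t\mathbf{u}+\mathbf{u}\|_{L^2}^2$ already recorded in (\ref{meter_N}) together with a frequency-splitting argument governed by the threshold $N(t)$. Concretely, starting from
$$\|\partial_t\mathbf{u}+\mathbf{u}\|_{L^2(\Omega)}^2(t)=\sum_{p\in\Z}\sum_{q\in\N}\frac{p^2}{p^2+(q\tfrac{\pi}{2})^2}\,\bigl|\mathcal F_{\omega}[\bar\rho(t)](p,q)\bigr|^2,$$
I would observe that the Fourier multiplier $m(p,q):=p^2/(p^2+(q\tfrac{\pi}{2})^2)$ satisfies the pointwise lower bound $m(p,q)\ge 1/N(t)$ precisely on the ``low-frequency'' set $\mathcal A(t):=\{(p,q)\in\Z\times\N : (q\tfrac{\pi}{2})^2\le (N(t)-1)p^2\}$. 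Since $\bar\rho$ has zero horizontal mean only modes with $|p|\ge 1$ contribute, so splitting the sum over $\Z\times\N$ into $\mathcal A(t)$ and its complement $\mathcal B(t)$ and discarding the non-negative contribution of $\mathcal B(t)$ gives
$$\|\partial_t\mathbf{u}+\mathbf{u}\|_{L^2}^2(t)\ge\frac{1}{N(t)}\left(\,\|\bar\rho\|_{L^2}^2(t)-\sum_{\mathcal B(t)}\bigl|\mathcal F_{\omega}[\bar\rho(t)](p,q)\bigr|^2\right).$$

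The decisive point is that on the ``bad'' set $\mathcal B(t)$ one has $(q\tfrac{\pi}{2})^2>(N(t)-1)p^2\ge N(t)-1$ (because $|p|\ge 1$), so $q\gtrsim N(t)^{1/2}$ and consequently $(q\tfrac{\pi}{2})^{2\alpha}\gtrsim N(t)^{\alpha}$. Factoring this lower bound out of the bad sum and extending to the full lattice,
$$\sum_{\mathcal B(t)}\bigl|\mathcal F_{\omega}[\bar\rho(t)](p,q)\bigr|^2\le \frac{C}{N(t)^{\alpha}}\sum_{p\in\Z}\sum_{q\in\N}(q\tfrac{\pi}{2})^{2\alpha}\bigl|\mathcal F_{\omega}[\bar\rho(t)](p,q)\bigr|^2=\frac{C}{N(t)^{\alpha}}\,\|\partial_y^{\alpha}\bar\rho\|_{L^2}^2(t)\le \frac{C}{N(t)^{\alpha}}\,\|\bar\rho\|_{H^{\alpha}}^2(t),$$
where the identification of the weighted Fourier sum with the homogeneous Sobolev norm is provided by Lemma \ref{properties_basis}. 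Combining the two displays yields the claimed inequality (\ref{lower_bound}), the numerical constant $C$ being harmless since it can be absorbed into a redefinition of the free parameter $N(t)$.

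The argument is essentially a frequency-splitting trick so there is no real obstacle: the only step that deserves attention is the choice of splitting threshold, which must be tuned so that the exponent of $N(t)$ in the error matches $1+\alpha$. This is exactly what the bound $q\gtrsim N(t)^{1/2}$ on $\mathcal B(t)$ delivers, since gaining $q^{2\alpha}$ in the denominator costs one factor of $N(t)^{\alpha}$, which combined with the prefactor $1/N(t)$ produces the announced $1/N(t)^{1+\alpha}$. This lower bound will then be coupled with the decay estimate (\ref{quasi-linear_decay}) in Section \ref{Sec_7} to close a decay rate for $\|\bar\rho\|_{L^2}$, by optimising $N(t)$ against the higher Sobolev norms of $\bar\rho$.
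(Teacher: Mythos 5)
Your argument is correct and is in essence the paper's own: expand $\|\partial_t\mathbf{u}+\mathbf{u}\|_{L^2}^2$ via (\ref{meter_N}), split the lattice at a threshold dictated by $N(t)$, keep the low frequencies (where the multiplier beats $1/N(t)$) and pay for the high frequencies with the $H^{\alpha}$ norm. The only difference is the splitting region: the paper first uses $p^2\geq 1$ (again because $\bar{\rho}$ has zero horizontal mean) to bound the multiplier below by $\tfrac{1}{p^2+(q\pi/2)^2}$ and then splits on $p^2+(q\tfrac{\pi}{2})^2\gtrless N(t)$, so that on the bad set the exact inequality $1\leq\bigl((p^2+(q\tfrac{\pi}{2})^2)/N(t)\bigr)^{\alpha}$ yields (\ref{lower_bound}) with no constant; your splitting on $(q\tfrac{\pi}{2})^2\gtrless(N(t)-1)p^2$ forces you to pass from $(N(t)-1)^{\alpha}$ to $N(t)^{\alpha}$ and so produces a constant $C$ in front of the error term. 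That constant cannot literally be removed by redefining $N(t)$ (the two terms scale as $N^{-1}$ and $N^{-(1+\alpha)}$, so no single rescaling fixes both), but it is genuinely harmless: the downstream estimate (\ref{lema_decaimiento}) is stated with $\lesssim$ anyway, so your version of the lemma suffices for everything that follows.
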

\begin{proof} First of all, we introduce the auxiliary function $N:\R^{+}\longrightarrow \R^{+}$ into (\ref{meter_N}) to obtain that:
\begin{align}\label{part_1_lemma}
||\partial_t\mathbf{u}+\mathbf{u}||_{L^2(\Omega)}^2(t)&\geq \frac{1}{N(t)}||\bar{\rho}||_{L^2(\Omega)}^2(t)+\sum_{(p,q)\in\Z_{\neq 0}\times\N} \left(\frac{1}{p^2+ \left(q\tfrac{\pi}{2}\right)^2 }-\frac{1}{N(t)}\right)\, \big|\mathcal{F}_{\omega}[\bar{\rho}](p,q)\big|^2\nonumber\\
&\geq \frac{1}{N(t)}\left(||\bar{\rho}||_{L^2(\Omega)}^2(t)-\sum_{p^2+\left(q\tfrac{\pi}{2}\right)^2\geq N(t)} \big|\mathcal{F}_{\omega}[\bar{\rho}](p,q)\big|^2 \right)
\end{align}
On the other hand, by Lemma (\ref{properties_basis}) we have that:
\begin{align}\label{part_2_lemma}
\sum_{p^2+\left(q\tfrac{\pi}{2}\right)^2\geq N(t)} \big|\mathcal{F}_{\omega}[\bar{\rho}](p,q)\big|^2 &\leq \frac{1}{N(t)^{\alpha}}\sum_{p^2+\left(q\tfrac{\pi}{2}\right)^2\geq N(t)}\left(p^2+\left(q\tfrac{\pi}{2}\right)^2\right)^{\alpha} \big|\mathcal{F}_{\omega}[\bar{\rho}](p,q)\big|^2\nonumber\\
&\leq \frac{1}{N(t)^{\alpha}} ||\bar{\rho}||_{H^{\alpha}(\Omega)}^2(t)
\end{align}
Combining the preceding estimates (\ref{part_1_lemma}) and (\ref{part_2_lemma}) we arrive to (\ref{lower_bound}).
\end{proof}
This gives for some $0<C<1$ that:
\begin{align*}
\partial_t \left\lbrace ||\mathbf{u}||_{H^k_{w}(\Omega)}^2(t)+ ||\bar{\rho}||_{H^k(\Omega)}^2(t)  +||\partial_t \mathbf{u}||_{H^k(\Omega)}^2(t)+ ||u_2||_{H^k_{w}(\Omega)}^2(t)\right\rbrace &\leq -C\,||\partial_t\mathbf{u}+\mathbf{u}||_{H^k(\Omega)}^2(t)\\
&\hspace{-1.5 cm}\leq -\frac{C}{N(t)}||\bar{\rho}||_{H^k(\Omega)}^2(t)+\frac{C}{N(t)^{1+\alpha}}||\bar{\rho}||_{H^{k+\alpha}(\Omega)}^2(t).
\end{align*}
It is enough to assume that $N:\R^{+}\longrightarrow \R^{+}$ satisfies that $N'(t)\,N(t)\geq 1$ to obtain:
\begin{equation}\label{lema_decaimiento}
\mathrm{E}_{k}(t)\lesssim e^{-(N(t)-N(0))}\mathrm{E}_{k}(0)+\int_{0}^t \frac{e^{-(N(t)-N(s))}}{N(s)^{1+\alpha}} ||\bar{\rho}||_{H^{k+\alpha}(\Omega)}^2(s)\,ds
\end{equation}
where
$$\mathrm{E}_k(t):=||\mathbf{u}||_{H^k_{w}(\Omega)}^2(t)+ ||\bar{\rho}||_{H^k(\Omega)}^2(t)  +||\partial_t \mathbf{u}||_{H^k(\Omega)}^2(t)+ ||u_2||_{H^k_{w}(\Omega)}^2(t).$$
For simplicity, we take $N(t):=2\sqrt{1+t}$ in (\ref{lema_decaimiento}), which give us:
$$\mathrm{E}_{k}(t)\lesssim e^{-2\sqrt{1+t}}\,\mathrm{E}_{k}(0)+\left(\int_{0}^{t}\frac{e^{-2(\sqrt{1+t}-\sqrt{1+s})}}{(1+s)^{\tfrac{1+\alpha}{2}}}\, ds\right)||\bar{\rho}||_{L^{\infty}([0,t],H^{k+\alpha}(\Omega))}^2.$$
Now, we use the following calculus lemma, which proof can be found in \cite{Castro-Cordoba-Lear}.
\begin{lemma}Let $\alpha\in \N$, we have that:
$$\int_{0}^{t}\frac{e^{-2(\sqrt{1+t}-\sqrt{1+s})}}{(1+s)^{\tfrac{1+\alpha}{2}}}\, ds \lesssim \frac{1}{(1+t)^{\tfrac{\alpha}{2}}}$$
\end{lemma}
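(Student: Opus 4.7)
My plan is to reduce the integral to a cleaner form via the change of variables $u = \sqrt{1+s}$ and then split the resulting domain into two pieces, each admitting a direct elementary bound. With $u=\sqrt{1+s}$, so that $ds=2u\,du$ and $(1+s)^{(1+\alpha)/2}=u^{1+\alpha}$, the integral becomes
\[
\int_{0}^{t}\frac{e^{-2(\sqrt{1+t}-\sqrt{1+s})}}{(1+s)^{(1+\alpha)/2}}\,ds \;=\; 2\int_{1}^{\sqrt{1+t}}\frac{e^{-2(\sqrt{1+t}-u)}}{u^{\alpha}}\,du.
\]
This single substitution is the heart of the matter: it decouples the polynomial weight from the exponential weight, which in the original variables are entangled through the double square root.

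Assuming $t\geq 3$ (so that $\sqrt{1+t}\geq 2$; for $t\in[0,3]$ both sides of the claimed inequality are uniformly bounded, so the estimate is trivial), I would split at $u=\sqrt{1+t}/2$. On the \emph{near} piece $u\in[\sqrt{1+t}/2,\sqrt{1+t}]$, the polynomial weight satisfies $u^{\alpha}\geq (1+t)^{\alpha/2}/2^{\alpha}$, while a further substitution $v=\sqrt{1+t}-u$ bounds the exponential integral by an absolute constant:
\[
\int_{\sqrt{1+t}/2}^{\sqrt{1+t}}e^{-2(\sqrt{1+t}-u)}\,du \;=\; \int_{0}^{\sqrt{1+t}/2}e^{-2v}\,dv \;\leq\; \tfrac{1}{2}.
\]
This already yields the desired bound $\lesssim (1+t)^{-\alpha/2}$ on this piece. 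On the \emph{far} piece $u\in[1,\sqrt{1+t}/2]$ the exponential weight is globally small, $e^{-2(\sqrt{1+t}-u)}\leq e^{-\sqrt{1+t}}$, and the remaining polynomial integral $\int_{1}^{\sqrt{1+t}/2}u^{-\alpha}\,du$ grows at most like $\sqrt{1+t}$ (it is in fact uniformly bounded for $\alpha\geq 2$ and logarithmic for $\alpha=1$), so this contribution is dominated by $e^{-\sqrt{1+t}}\sqrt{1+t}$, which decays faster than any negative power of $1+t$ and is absorbed into $(1+t)^{-\alpha/2}$.

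I do not anticipate any genuine obstacle: once the substitution has turned the double square root into a plain $e^{-2(\sqrt{1+t}-u)}$, the argument is elementary; the implicit constant in $\lesssim$ may depend on $\alpha$, which is harmless since $\alpha$ is fixed in each application. An alternative route would be a direct integration by parts based on $\tfrac{d}{ds}e^{2\sqrt{1+s}}=e^{2\sqrt{1+s}}/\sqrt{1+s}$, which gives a clean recursion lowering the exponent of $(1+s)$ by $1/2$ at each step; but the split-and-bound strategy above seems the most transparent.
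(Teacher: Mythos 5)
Your proof is correct. Note that the paper does not actually prove this lemma in situ---it defers the proof to the companion reference \cite{Castro-Cordoba-Lear}---so there is no in-paper argument to compare against; your substitution $u=\sqrt{1+s}$ followed by the split at $u=\sqrt{1+t}/2$ is a complete and elementary verification (the near piece giving the stated $(1+t)^{-\alpha/2}$ rate and the far piece being superpolynomially small), and the dependence of the implicit constant on $\alpha$ is harmless since $\alpha$ is fixed (equal to $\gamma$) in the only place the lemma is used.
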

\noindent
Then, applying the previous inequality we see that:
$$\mathrm{E}_{k}(t)\lesssim e^{-2\sqrt{1+t}}\,\mathrm{E}_{k}(0)+ \frac{||\bar{\rho}||_{L^{\infty}([0,t],H^{k+\alpha}(\Omega))}^2}{(1+t)^{\tfrac{\alpha}{2}}}$$
Using that $||\mathbf{u}||_{H^{n}_{w}(\Omega)}(t)\approx ||\mathbf{u}||_{H^{n}(\Omega)}(t)$ are equivalents norms together with the fact that $E_{n}(t)$ decays in time by (\ref{quasi-linear_decay}), we have proved that:
$$E_{k}(t)\lesssim \frac{E_{k+\alpha}(0)}{(1+t)^{\frac{\alpha}{2}}}$$
In particular, we have that:
$$||\mathbf{u}||_{H^{k}(\Omega)}^2(t)+||\bar{\rho}||_{H^{k}(\Omega)}^2(t)\lesssim \frac{||\mathbf{u}||_{H^{k+\alpha}(\Omega)}^2(0)+||\bar{\rho}||_{H^{k+\alpha}(\Omega)}^2(0)}{(1+t)^{\frac{\alpha}{2}}}$$
\newpage

\subsection{Non-Linear Decay}
Next, we will show how this decay of the quasi-linear solutions can be used to establish the stability of the stationary solution $(\rho,\mathbf{u})=(0,0)$ for the general problem (\ref{System_rho}). When perturbing around it, as we have seen in Section \ref{Sec_3.1}, we get the following system:
\begin{equation}\label{pertubar_duhamel}
\left\{
\begin{array}{rl}
\partial_t\bar{\rho}+ (1+\partial_y\tilde{\rho})\, u_2 &= -\overline{\mathbf{u}\cdot\nabla\bar{\rho}}\\
\partial_t\tilde{\rho}&=-\widetilde{\mathbf{u}\cdot\nabla\bar{\rho}}\\
\partial_t\mathbf{u}+\mathbf{u}-\left(-\nabla \Pi^{L}+ (0,\bar{\rho}) \right)&=-(\mathbf{u}\cdot\nabla)\mathbf{u}-\nabla\Pi^{NL}\\
\nabla\cdot\mathbf{u}&=0 
\end{array}
\right.
\end{equation}
with $(\mathbf{u}\cdot\nabla)\mathbf{u}+\nabla\Pi^{NL}\equiv\mathbb{L}[(\mathbf{u}\cdot\nabla)\mathbf{u}]$, where $\mathbb{L}$ is the Leray's proyector.\\

Using Duhamel's formula, with $G(y,t)\equiv \partial_y\tilde{\rho}(y,t)$ small enough in the adequate space, we can write the solution of (\ref{pertubar_duhamel}) as:

$$\left( \begin{matrix} \bar{\rho}(t)\\ \mathbf{u}(t) \end{matrix} \right)=e^{\mathcal{L}(t)}\left( \begin{matrix} \bar{\rho}(0)\\ \mathbf{u}(0) \end{matrix} \right)-\int_{0}^{t} e^{\mathcal{L}(t-s)}\left( \begin{matrix} \overline{\mathbf{u}\cdot\nabla\bar{\rho}}(s)\\ \mathbb{L}[(\mathbf{u}\cdot\nabla)\mathbf{u}](s) \end{matrix} \right)\,ds \qquad \text{and} \qquad \tilde{\rho}(t)=\tilde{\rho}(0)-\int_{0}^{t}\widetilde{\mathbf{u}\cdot\nabla\bar{\rho}}(s)\,ds$$
where $\mathcal{L}(t)$ denotes the solution operator of the associated quasi-linear problem (\ref{new_lineal}). Therefore, we have:
\begin{align}\label{Duhamel_formula}
||\bar{\rho}||_{H^{n}(\Omega)}(t)+||\mathbf{u}||_{H^n(\Omega)}(t)&\lesssim \frac{||\bar{\rho}||_{H^{n+\alpha}(\Omega)}(0)+||\mathbf{u}||_{H^{n+\alpha}(\Omega)}(0)}{(1+t)^{\frac{\alpha}{4}}}\\
&\qquad+\int_{0}^{t}\frac{||\overline{\mathbf{u}\cdot\nabla\bar{\rho}}||_{H^{n+\alpha}(\Omega)}(s)+||\mathbb{L}[(\mathbf{u}\cdot\nabla)\mathbf{u}]||_{H^{n+\alpha}(\Omega)}(s)}{(1+(t-s))^{\frac{\alpha}{4}}}\,ds \nonumber
\end{align}
and
$$||\tilde{\rho}||_{H^{n}(\Omega)}(t)\leq ||\tilde{\rho}||_{H^{n}(\Omega)}(0)+\int_{0}^{t}||\widetilde{\mathbf{u}\cdot\nabla\bar{\rho}}||_{H^{n}(\Omega)}(s)\,ds.$$

\section{The boostraping}\label{Sec_7}
We now demonstrate the bootstrap argument used to prove our goal. Theorem (\ref{main_energy_estimate}) tell us that the following estimate holds for $k\geq 6$:
\begin{align*}
\partial_t \mathfrak{E}_{k+1}(t)&\leq -(C-\tilde{C}\, \Psi_1(t))\left[||\nabla\Pi-(0,\bar{\rho})||_{H^k}^2(t)+||\left(\mathbf{u}\cdot\nabla\right) \mathbf{u}||_{H^k}^2(t)+||\mathbf{u}||_{H^k}^2(t)+||\partial_t \mathbf{u}||_{H^k}^2(t)\right] \nonumber\\
&\phantom{=}-\left(1-\tilde{C}\,\Psi_2(t)\right)\,\left(\int_{\Omega} |\partial^{k+1}\mathbf{u}(x,y,t)|^2\,(1+\partial_y\tilde{\rho}(y,t)) \,dxdy\right) \nonumber\\
&\phantom{=}+ ||\mathbf{u}||_{H^{4}}\,\mathfrak{E}_{k+1}(t).
\end{align*}

The last section is devoted to prove the main result of this paper:
\begin{thm}\label{main_thm}
There exists $\epsilon_0>0$ and parameters $\gamma\in\N$ with $\gamma>4$ such that if we solve $(\ref{Damping_Boussineq})$ with initial data $\varrho(0)=\Theta+\rho(0)$ and velocity $\mathbf{u}(0)=(u_1(0),u_2(0))$ such that $\left(\rho(0),\mathbf{u}(0)\right)\in X^{\kappa+1}(\Omega)\times\mathbb{X}^{\kappa+1}(\Omega)$ and $\mathfrak{E}_{\kappa+1}(0)<\epsilon^2\leq \epsilon_0^2$ with $\kappa\geq 6+2\gamma$ then, the solution exists globally in time and satisfies the following:
\begin{enumerate}
	\item $||\bar{\varrho}||_{H^{4}}(t)\equiv||\bar{\rho}||_{H^{4}}(t)\lesssim \frac{\varepsilon}{(1+t)^{\gamma/4}},$
	\item $||\mathbf{u}||_{H^{4}}(t)\lesssim \frac{\varepsilon}{(1+t)^{\gamma/4}},$
	 \item $||\tilde{\varrho}-\Theta||_{H^{\kappa+1}}(t)\equiv||\tilde{\rho}||_{H^{\kappa+1}}(t)\leq 6\epsilon^2,$
\end{enumerate}

\end{thm}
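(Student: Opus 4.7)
The plan is a standard continuity/bootstrap argument combining the high-order energy estimate of Theorem \ref{main_energy_estimate} with the linear decay/Duhamel formula (\ref{Duhamel_formula}). Fix $\gamma\in\N$ with $\gamma>4$ and $\kappa\ge 6+2\gamma$, and let $C_0$ denote the implicit constant appearing in (\ref{Duhamel_formula}). Define $T^{\star}$ as the supremum of times $T\in[0,\infty)$ on which the solution exists in $X^{\kappa+1}\times\mathbb{X}^{\kappa+1}$ and the following bootstrap bounds hold:
\begin{equation*}
(\mathrm{B}_1)\ \|\bar\rho\|_{H^{4}}(t)+\|\mathbf{u}\|_{H^{4}}(t)\le \tfrac{4C_0\,\varepsilon}{(1+t)^{\gamma/4}},\qquad (\mathrm{B}_2)\ \|\tilde\rho\|_{H^{\kappa+1}}(t)\le 6\,\varepsilon^2,\qquad (\mathrm{B}_3)\ \mathfrak{E}_{\kappa+1}(t)\le 4\,\varepsilon^2.
\end{equation*}
Local existence (Theorem \ref{local_existence}) and the smallness of the initial data guarantee $T^{\star}>0$; the aim is to show that on $[0,T^{\star})$ each bound actually holds with the prefactor halved, hence $T^{\star}=\infty$.

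First I would upgrade $(\mathrm{B}_3)$. Sobolev embedding gives $\|\partial_y\tilde\rho\|_{L^\infty}\lesssim \|\tilde\rho\|_{H^{\kappa+1}}\lesssim \varepsilon^2$, so by choosing $\epsilon_0$ small enough the quantities $\Psi_1(t),\Psi_2(t)$ in Theorem \ref{main_energy_estimate} are dominated by $\tilde C(\|\rho\|_{H^{\kappa+1}}+\|\mathbf{u}\|_{H^{\kappa}}+\mathfrak{E}_{\kappa+1}^{1/2})\lesssim \varepsilon$, so the first two lines of (\ref{energy_estimate}) are non-positive. What remains is
\begin{equation*}
\partial_t \mathfrak{E}_{\kappa+1}(t)\le \|\mathbf{u}\|_{H^4}(t)\,\mathfrak{E}_{\kappa+1}(t).
\end{equation*}
Inserting $(\mathrm{B}_1)$ and Gronwall, $\mathfrak{E}_{\kappa+1}(t)\le \mathfrak{E}_{\kappa+1}(0)\exp\bigl(4C_0\varepsilon\int_0^\infty(1+s)^{-\gamma/4}ds\bigr)\le 2\,\mathfrak{E}_{\kappa+1}(0)\le 2\varepsilon^2$, since $\gamma/4>1$ makes the time integral finite and $\varepsilon$ is small. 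This improves $(\mathrm{B}_3)$.

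Next I would close $(\mathrm{B}_1)$ using Duhamel's formula (\ref{Duhamel_formula}) with $n=4$ and $\alpha=\gamma$. The linear part is bounded by $C_0\,\varepsilon\,(1+t)^{-\gamma/4}$ by the assumption on the initial data (noting $\|\cdot\|_{H^{4+\gamma}}^2\le \mathfrak{E}_{\kappa+1}(0)$ because $4+\gamma\le \kappa+1$). For the Duhamel integral, the improved $(\mathrm{B}_3)$ together with the tame product estimate (\ref{product_rule}) and the Sobolev embedding $L^\infty\hookrightarrow H^2$ yield
\begin{equation*}
\|\overline{\mathbf{u}\cdot\nabla\bar\rho}\|_{H^{4+\gamma}}+\|\mathbb{L}[(\mathbf{u}\cdot\nabla)\mathbf{u}]\|_{H^{4+\gamma}}\lesssim \|\mathbf{u}\|_{H^{4}}\big(\|\rho\|_{H^{5+\gamma}}+\|\mathbf{u}\|_{H^{5+\gamma}}\big)+\|\mathbf{u}\|_{H^{5+\gamma}}\|\mathbf{u}\|_{H^{4}},
\end{equation*}
where I exploit the fact that $5+\gamma\le \kappa+1$ to bound the higher-norm factor by $\mathfrak{E}_{\kappa+1}^{1/2}\lesssim \varepsilon$, while the $H^4$ factor contributes the decay $\varepsilon(1+s)^{-\gamma/4}$ from $(\mathrm{B}_1)$. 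This produces an integrand $\lesssim \varepsilon^2(1+s)^{-\gamma/4}$ and the standard calculus bound
\begin{equation*}
\int_0^t \frac{ds}{(1+t-s)^{\gamma/4}(1+s)^{\gamma/4}}\lesssim \frac{1}{(1+t)^{\gamma/4}}
\end{equation*}
(valid because $\gamma/4>1$) gives a Duhamel contribution of size $\lesssim \varepsilon^2(1+t)^{-\gamma/4}$. Summing, $\|\bar\rho\|_{H^4}+\|\mathbf{u}\|_{H^4}\le (C_0+C\varepsilon)\,\varepsilon(1+t)^{-\gamma/4}\le 2C_0\varepsilon(1+t)^{-\gamma/4}$, improving $(\mathrm{B}_1)$.

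Finally $(\mathrm{B}_2)$ follows from the identity $\partial_t\tilde\rho=-\widetilde{\mathbf{u}\cdot\nabla\bar\rho}$: integrating in time,
\begin{equation*}
\|\tilde\rho\|_{H^{\kappa+1}}(t)\le \|\tilde\rho\|_{H^{\kappa+1}}(0)+\int_0^t \|\mathbf{u}\cdot\nabla\bar\rho\|_{H^{\kappa+1}}(s)\,ds\le \varepsilon+C\int_0^t \|\mathbf{u}\|_{H^4}\|\rho\|_{H^{\kappa+2}}\,ds,
\end{equation*}
but here $\|\rho\|_{H^{\kappa+2}}$ is not controlled by $\mathfrak{E}_{\kappa+1}$, so I would instead split the product as $(\mathrm{low}\cdot\mathrm{high})+(\mathrm{high}\cdot\mathrm{low})$ keeping the $H^4$ norm (which decays) on one factor and $H^{\kappa+1}$ (bounded by $\mathfrak{E}_{\kappa+1}^{1/2}\lesssim \varepsilon$) on the other; the integrand is then $\lesssim \varepsilon^2(1+s)^{-\gamma/4}$, which is integrable, yielding $\|\tilde\rho\|_{H^{\kappa+1}}(t)\le \varepsilon+C\varepsilon^2\le 3\varepsilon^2$ after taking $\epsilon_0$ small.

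Because each of the three bootstrap bounds is improved strictly, the continuity method forces $T^{\star}=\infty$, and combined with the blow-up criterion from Section \ref{Sec_4}, the solution is global and satisfies the stated decay. The main subtlety I expect is matching the indices: one must pick $\gamma>4$ (to make the Duhamel convolution and the Gronwall exponent integrable) while keeping $\kappa\ge 6+2\gamma$ so that the nonlinear Duhamel estimates at order $4+\gamma$ can still be closed by the energy $\mathfrak{E}_{\kappa+1}$ — a delicate but ultimately clean accounting.
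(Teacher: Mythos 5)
Your proposal follows essentially the same route as the paper: a continuity/bootstrap argument in which Theorem \ref{main_energy_estimate} plus Gronwall controls $\mathfrak{E}_{\kappa+1}$ once $\|\mathbf{u}\|_{H^4}$ decays integrably, and the Duhamel formula (\ref{Duhamel_formula}) for the quasi-linear semigroup closes that decay. Two organizational differences are worth noting. First, in the nonlinear Duhamel term you place the decaying $H^4$ norm directly on one factor via the tame estimate (\ref{product_rule}), whereas the paper first bounds the nonlinearity by $\|\mathbf{u}\|_{H^{4+\gamma}}(\|\bar\rho\|_{H^{5+\gamma}}+\|\mathbf{u}\|_{H^{5+\gamma}})$ and then interpolates via Gagliardo--Nirenberg, $\|f\|_{H^{5+\gamma}}\lesssim\|f\|_{H^{6+2\gamma}}^{1/2}\|f\|_{H^4}^{1/2}$, which is what forces $\kappa\ge 6+2\gamma$; your variant is cleaner and would in principle get by with a smaller $\kappa$. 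Second, you run three simultaneous bootstrap bounds, while the paper nests a secondary continuity argument for the decay of $\|\bar\rho\|_{H^4}+\|\mathbf{u}\|_{H^4}$ inside the outer bootstrap on $\mathfrak{E}_{\kappa+1}$; both are standard and equivalent.

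One step of yours does not survive scrutiny as written: the closure of $(\mathrm{B}_2)$ ends with ``$\varepsilon+C\varepsilon^2\le 3\varepsilon^2$'', which is false for small $\varepsilon$ (the left side is of order $\varepsilon$). The hypothesis $\mathfrak{E}_{\kappa+1}(0)<\epsilon^2$ only gives $\|\tilde\rho\|_{H^{\kappa+1}}(0)\lesssim\varepsilon$, not $\varepsilon^2$, so a bound of size $O(\varepsilon^2)$ on the norm itself cannot be propagated from the data. This is harmless for the overall argument because $(\mathrm{B}_2)$ is redundant: the smallness of $\|\partial_y\tilde\rho\|_{L^\infty}$ needed for the weight, for $\Psi_1,\Psi_2$, and for the quasi-linear decay already follows from $(\mathrm{B}_3)$ by Sobolev embedding, which is exactly how the paper proceeds (item (3) of the theorem is read there as a bound on the squared norm coming from $\mathfrak{E}_{\kappa+1}(t)\le 3\varepsilon^2$, not proved by a separate time integration). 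You should either drop $(\mathrm{B}_2)$ or restate it with the correct $O(\varepsilon)$ scaling.
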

We need to prove:
\begin{lemma}\label{main_lemma}
If $\mathfrak{E}_{\kappa+1}(0)\leq \varepsilon^2$ and $\mathfrak{E}_{\kappa+1}(t)\leq 6\varepsilon^2$ on the interval $[0,T]$ with $0<\varepsilon\leq \varepsilon_0$ small enough. Then $\mathfrak{E}_{\kappa+1}(t)$ remains uniformly bounded by $3\varepsilon^2$ on the interval \nolinebreak$[0,T]$.
\end{lemma}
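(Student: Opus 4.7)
The plan is based on a standard continuity argument driven by the energy estimate of Theorem \ref{main_energy_estimate} and the linear decay coming from the Duhamel analysis of Section \ref{Sec_6}. I would organize it in three steps.

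First, I would exploit the bootstrap assumption $\mathfrak{E}_{\kappa+1}(t)\le 6\varepsilon^2$ to simplify (\ref{energy_estimate}). The Sobolev embedding $H^{1}([-1,1])\hookrightarrow L^{\infty}([-1,1])$ together with the uniform bound $||\tilde{\rho}||_{H^{\kappa+1}}\lesssim\varepsilon$ yields $||\partial_y\tilde{\rho}||_{L^{\infty}}\lesssim\varepsilon$, so that for $\varepsilon_0$ small the weight $1+\partial_y\tilde{\rho}$ is trapped between $1/2$ and $3/2$ and the weighted energy $\mathfrak{E}_{\kappa+1}$ is comparable to its unweighted counterpart. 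A parallel computation shows $\Psi_1(t),\,\Psi_2(t)\lesssim\varepsilon$ throughout $[0,T]$, so the coefficients $(C-\tilde{C}\Psi_1)$ and $(1-\tilde{C}\Psi_2)$ are strictly positive and the first two lines of (\ref{energy_estimate}) are genuinely dissipative. Discarding them, Theorem \ref{main_energy_estimate} collapses to the scalar inequality
$$\partial_t\mathfrak{E}_{\kappa+1}(t)\le C\,||\mathbf{u}||_{H^{4}}(t)\,\mathfrak{E}_{\kappa+1}(t).$$

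Second, and this is the heart of the argument, I would establish the integrable time decay $||\mathbf{u}||_{H^{4}}(t)+||\bar{\rho}||_{H^{4}}(t)\lesssim\varepsilon(1+t)^{-\gamma/4}$ on $[0,T]$ through a secondary bootstrap on
$$M(T):=\sup_{0\le t\le T}(1+t)^{\gamma/4}\left(||\mathbf{u}||_{H^{4}}(t)+||\bar{\rho}||_{H^{4}}(t)\right).$$
Applying the Duhamel identity (\ref{Duhamel_formula}) with $n=4$ and $\alpha=\gamma$, the linear term is controlled by $\mathfrak{E}_{\kappa+1}(0)^{1/2}\le\varepsilon$ since $\kappa+1\ge 4+\gamma$. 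For the nonlinear sources I would use the product estimate (\ref{product_rule}) and the Sobolev embedding $H^{2}\hookrightarrow L^{\infty}$: since the bootstrap energy bound gives $||\mathbf{u}||_{H^{5+\gamma}}+||\bar{\rho}||_{H^{5+\gamma}}\lesssim\varepsilon$, available thanks to $\kappa\ge 6+2\gamma$, one finds
$$||\overline{\mathbf{u}\cdot\nabla\bar{\rho}}||_{H^{4+\gamma}}(s)+||\mathbb{L}[(\mathbf{u}\cdot\nabla)\mathbf{u}]||_{H^{4+\gamma}}(s)\lesssim\varepsilon\,M(T)\,(1+s)^{-\gamma/4}.$$
Invoking the calculus lemma $\int_{0}^{t}(1+(t-s))^{-\gamma/4}(1+s)^{-\gamma/4}\,ds\lesssim(1+t)^{-\gamma/4}$, valid for $\gamma>4$, the Duhamel inequality closes as $M(T)\lesssim\varepsilon+\varepsilon M(T)$, which for $\varepsilon_0$ small yields $M(T)\le C\varepsilon$. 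This simultaneously establishes conclusions (1) and (2) of Theorem \ref{main_thm}.

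Third, feeding the decay back into the simplified energy inequality and integrating by Gronwall, I would obtain
$$\mathfrak{E}_{\kappa+1}(t)\le\mathfrak{E}_{\kappa+1}(0)\,\exp\!\left(C'\varepsilon\int_{0}^{t}(1+s)^{-\gamma/4}\,ds\right)\le\varepsilon^{2}\,e^{C''\varepsilon},$$
where the finiteness of the time integral again invokes $\gamma>4$. Choosing $\varepsilon_0$ so that $e^{C''\varepsilon_0}\le 3$ delivers $\mathfrak{E}_{\kappa+1}(t)\le 3\varepsilon^{2}$ on $[0,T]$, strictly improving the bootstrap. The main obstacle is the second step: the decay must be obtained at a regularity level \emph{strictly below} that controlled by the energy, and the secondary Duhamel bootstrap must close compatibly with both the $\gamma$ derivatives traded for the linear decay rate and the further derivatives spent by the product estimates for the source terms, which is precisely what the threshold $\kappa\ge 6+2\gamma$ affords.
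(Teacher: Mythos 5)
Your proposal is correct and follows essentially the same route as the paper: the bootstrap hypothesis makes the coefficients in (\ref{energy_estimate}) positive, Gronwall reduces everything to an integrable decay of $||\mathbf{u}||_{H^{4}}$, and that decay is obtained from the Duhamel formula (\ref{Duhamel_formula}) via a secondary bootstrap closed with the convolution lemma for $\gamma>4$. The only cosmetic difference is that you bound the nonlinear Duhamel sources by a high--low product splitting (one factor in $H^{4}$ decaying, one in $H^{5+\gamma}$ bounded by $\varepsilon$), whereas the paper extracts the decaying factor through Gagliardo--Nirenberg interpolation $||f||_{H^{5+\gamma}}\lesssim||f||_{H^{6+2\gamma}}^{1/2}||f||_{H^{4}}^{1/2}$; both close under the same threshold $\kappa\geq 6+2\gamma$.
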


We will prove Lemma (\ref{main_lemma}) through a bootstrap argument, where the main ingredient is the estimate (\ref{energy_estimate}). We will work with the following  bootstrap hypothesis, to assume that $\mathfrak{E}_{\kappa+1}(t)\leq 6\varepsilon^2$ on the interval $[0,T]$ where $\kappa$ is big enough and $0<\varepsilon<<1$ such that:
$$\left(C-\tilde{C}\, \Psi_1(t)\right)>0 \quad \text{and} \quad  \left(1-\tilde{C}\,\Psi_2(t)\right)>0 \qquad \text{on} \quad [0,T].$$
Then, by Gr\"onwall's inequality we have:
$$\mathfrak{E}_{\kappa+1}(t)\leq \mathfrak{E}_{\kappa+1}(0)\exp\left(\int_{0}^{t}||\mathbf{u}||_{H^{4}}(s)\, ds\right) \qquad t\in[0,T].$$
Our aim here is to show that the interval on which the a priori estimates hold can be extended to infinity. Using a continuity argument it will suffice to prove that $||\mathbf{u}||_{H^{4}}(t)$ decays at an integrable rate. An immediate consequence of this is and the previous inequality is that, there exists $T^{\star}>T$ such that $\mathfrak{E}_{\kappa+1}(t)\leq 6\varepsilon^2$ on it. Therefore, we can repeat iteratively this process, in order to extend our result for all time.

\subsection{Integral Decay of $||\mathbf{u}||_{H^{4}(\Omega)}$} In order to control $||\mathbf{u}||_{H^{4}(\Omega)}$ in time we have the following result.

\begin{lemma}
Assume that $\mathfrak{E}_{\kappa+1}(t)\leq 6\varepsilon^2$ for all $t\in[0,T]$ where $\kappa\geq 5+2\gamma$ with $\gamma\in\N$. Then, the solution satisfies that:
\begin{align*}
||\bar{\rho}||_{H^4}(t)+||\mathbf{u}||_{H^{4}}(t)&\lesssim \frac{||\bar{\rho}||_{H^{4+\gamma}}(0)+||\mathbf{u}||_{H^{4+\gamma}}(0)}{(1+t)^{\frac{\gamma}{4}}}+\int_{0}^{t}\frac{||\bar{\rho}||_{H^{4}}(s)+||\mathbf{u}||_{H^{4}}(s)}{(1+(t-s))^{\frac{\gamma}{4}}}\,\left(||\bar{\rho}||_{H^{\kappa+1}}(s)+||\mathbf{u}||_{H^{\kappa+1}}(s)\right)\,ds.
\end{align*}
\end{lemma}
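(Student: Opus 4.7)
The plan is to apply the Duhamel representation \eqref{Duhamel_formula} with $n=4$ and $\alpha=\gamma$. This immediately gives the linear part of the stated estimate, namely the term $(1+t)^{-\gamma/4}(||\bar{\rho}||_{H^{4+\gamma}}(0)+||\mathbf{u}||_{H^{4+\gamma}}(0))$, plus an integral against the two source terms $||\overline{\mathbf{u}\cdot\nabla\bar{\rho}}||_{H^{4+\gamma}}(s)$ and $||\mathbb{L}[(\mathbf{u}\cdot\nabla)\mathbf{u}]||_{H^{4+\gamma}}(s)$. All that remains is to bound each of these pointwise in $s$ by the product $(||\bar{\rho}||_{H^4}+||\mathbf{u}||_{H^4})(||\bar{\rho}||_{H^{\kappa+1}}+||\mathbf{u}||_{H^{\kappa+1}})$, which then yields the claim after pulling the decaying kernel $(1+(t-s))^{-\gamma/4}$ out of the product inside the integral.

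For the transport source, I would first drop the zero-mean projection, which is bounded in every Sobolev space, and then invoke the Moser-type product rule \eqref{product_rule} to obtain
\begin{equation*}
||\mathbf{u}\cdot\nabla\bar{\rho}||_{H^{4+\gamma}} \lesssim ||\mathbf{u}||_{H^{4+\gamma}}\,||\nabla\bar{\rho}||_{L^{\infty}}+||\mathbf{u}||_{L^{\infty}}\,||\nabla\bar{\rho}||_{H^{4+\gamma}}.
\end{equation*}
The Sobolev embedding $H^{2}(\Omega)\hookrightarrow L^{\infty}(\Omega)$ converts the two $L^\infty$ factors to $||\nabla\bar{\rho}||_{L^{\infty}}\lesssim ||\bar{\rho}||_{H^{4}}$ and $||\mathbf{u}||_{L^{\infty}}\lesssim ||\mathbf{u}||_{H^{4}}$, producing exactly the low-regularity factors demanded by the right-hand side. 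The hypothesis $\kappa\geq 5+2\gamma$ (in particular $\kappa+1\geq 5+\gamma$) then allows us to absorb the remaining high-regularity factors into the energy-controlled scale, namely $||\mathbf{u}||_{H^{4+\gamma}}\leq ||\mathbf{u}||_{H^{\kappa+1}}$ and $||\nabla\bar{\rho}||_{H^{4+\gamma}}\leq ||\bar{\rho}||_{H^{5+\gamma}}\leq ||\bar{\rho}||_{H^{\kappa+1}}$. The pressure-driven source is completely analogous: since the Leray projector is bounded in each Sobolev space, $||\mathbb{L}[(\mathbf{u}\cdot\nabla)\mathbf{u}]||_{H^{4+\gamma}}\lesssim ||(\mathbf{u}\cdot\nabla)\mathbf{u}||_{H^{4+\gamma}}$, and a second application of the product rule with the same embedding gives a bound by $||\mathbf{u}||_{H^{4}}\,||\mathbf{u}||_{H^{5+\gamma}}\lesssim ||\mathbf{u}||_{H^{4}}\,||\mathbf{u}||_{H^{\kappa+1}}$.

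There is no substantial obstacle beyond careful bookkeeping of indices; the quasi-linear decay machinery of Section \ref{Sec_6} does all the heavy lifting. The only point that requires attention is arranging the product-rule splittings so that the $L^{\infty}$ factor always lands on the object we intend to make integrable in time (and is therefore estimated via Sobolev embedding by the $H^{4}$ norm), while the top-order $H^{4+\gamma}$ or $H^{5+\gamma}$ factor is safely absorbed into $H^{\kappa+1}$, which the bootstrap hypothesis controls by $\sqrt{6}\,\varepsilon$. Substituting these two bounds into the Duhamel integral produces the inequality stated in the lemma.
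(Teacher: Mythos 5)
Your proof is correct, and it reaches the stated bound by a genuinely different route for the nonlinear source terms. The paper also starts from Duhamel's formula \eqref{Duhamel_formula} with $n=4$, $\alpha=\gamma$, but it first estimates the sources crudely as $||\overline{\mathbf{u}\cdot\nabla\bar{\rho}}||_{H^{4+\gamma}}+||\mathbb{L}[(\mathbf{u}\cdot\nabla)\mathbf{u}]||_{H^{4+\gamma}}\lesssim ||\mathbf{u}||_{H^{4+\gamma}}\bigl(||\bar{\rho}||_{H^{5+\gamma}}+||\mathbf{u}||_{H^{5+\gamma}}\bigr)$, i.e.\ a product of two \emph{high} norms, and only afterwards extracts the low $H^{4}$ factor via the Gagliardo--Nirenberg interpolation $||f||_{H^{5+\gamma}}\lesssim ||f||_{H^{6+2\gamma}}^{1/2}||f||_{H^{4}}^{1/2}$; this interpolation step is precisely what forces the threshold $6+2\gamma\leq\kappa+1$, i.e.\ $\kappa\geq 5+2\gamma$. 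You instead arrange the Moser product rule \eqref{product_rule} so that the low/high splitting appears immediately: the $L^{\infty}$ factors are absorbed into $H^{4}$ by Sobolev embedding and the top-order factors into $H^{\kappa+1}$. Your argument is more elementary (no interpolation needed) and in fact only requires $5+\gamma\leq\kappa+1$, so it proves the lemma under a weaker regularity assumption than the paper's route exploits; both approaches are equally valid under the stated hypothesis $\kappa\geq 5+2\gamma$. The one implicit point in both arguments is that the bootstrap hypothesis guarantees $\partial_y\tilde{\rho}$ is small in the norms required for the quasi-linear semigroup $e^{\mathcal{L}t}$ to enjoy the decay encoded in \eqref{Duhamel_formula}; you gesture at this (``the quasi-linear decay machinery does the heavy lifting'') and the paper states it explicitly, so nothing essential is missing.
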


\begin{proof}
By assumption $\partial_y\tilde{\rho}(t)$ is small in $H^{\kappa}(\Omega)$ and  $\partial_t\partial_y\tilde{\rho}(t)$ is small in $H^{\kappa-1}(\Omega)$ for all $t\in[0,T]$. This implies that $\mathcal{L}(t)$ has nice decay properties for $t\in[0,T]$ in $H^4(\Omega)$ if $\kappa\geq 5+\gamma$. Hence, Duhamel's formula (\ref{Duhamel_formula}) give us:
\begin{align*}
||\bar{\rho}||_{H^4}(t)+||\mathbf{u}||_{H^{4}}(t)&\lesssim \frac{||\bar{\rho}||_{H^{4+\gamma}}(0)+||\mathbf{u}||_{H^{4+\gamma}}(0)}{(1+t)^{\frac{\gamma}{4}}}\\
&\qquad +\int_{0}^{t}\frac{1}{(1+(t-s))^{\frac{\gamma}{4}}}\, \left\lbrace ||\overline{\mathbf{u}\cdot\nabla\bar{\rho}}||_{H^{4+\gamma}}(s)+||\mathbb{L}[\left(\mathbf{u}\cdot\nabla \right)\mathbf{u}]||_{H^{4+\gamma}}(s)\right\rbrace\,ds
\end{align*}
and we have that:
$$||\overline{\mathbf{u}\cdot\nabla\bar{\rho}}||_{H^{4+\gamma}}+||\mathbb{L}[\left(\mathbf{u}\cdot\nabla \right)\mathbf{u}]||_{H^{4+\gamma}}\leq ||\mathbf{u}||_{H^{4+\gamma}}\left( ||\bar{\rho}||_{H^{5+\gamma}}+||\mathbf{u}||_{H^{5+\gamma}}\right).$$
To sum up, we obtain that:
\begin{align*}
||\bar{\rho}||_{H^4}(t)+||\mathbf{u}||_{H^{4}(\Omega)}(t)&\lesssim \frac{||\bar{\rho}||_{H^{4+\gamma}}(0)+||\mathbf{u}||_{H^{4+\gamma}}(0)}{(1+t)^{\frac{\gamma}{4}}}+\int_{0}^{t}\frac{||\mathbf{u}||_{H^{4+\gamma}}(s)}{(1+(t-s))^{\frac{\gamma}{4}}}\,\left( ||\bar{\rho}||_{H^{5+\gamma}}(s)+||\mathbf{u}||_{H^{5+\gamma}}(s)\right) \,ds.
\end{align*}
However, due to the well-known Gagliardo-Nirenberg interpolation inequalities:
$$||D^{j}f||_{L^2(\Omega)}\leq C\, ||D^{2\,j}f||_{L^2(\Omega)}^{1/2}\, ||f||_{L^2(\Omega)}^{1/2}+\tilde{C}\,||f||_{L^2(\Omega)}$$
we obtain
\begin{equation}\label{G-L_u}
||\bar{\rho}||_{H^{5+\gamma}}\lesssim ||\bar{\rho}||_{H^{6+2\gamma}}^{1/2} \,||\bar{\rho}||_{H^{4}}^{1/2} \qquad \text{and} \qquad ||\mathbf{u}||_{H^{5+\gamma}}\lesssim ||\mathbf{u}||_{H^{6+2\gamma}}^{1/2} \,||\mathbf{u}||_{H^{4}}^{1/2}.
\end{equation}
Therefore, if we apply (\ref{G-L_u}) in the previous inequality, we get:
\begin{align*}
||\bar{\rho}||_{H^4}(t)+||\mathbf{u}||_{H^{4}}(t)&\lesssim \frac{||\bar{\rho}||_{H^{4+\gamma}}(0)+||\mathbf{u}||_{H^{4+\gamma}}(0)}{(1+t)^{\frac{\gamma}{4}}}\\
&\qquad +\int_{0}^{t}\frac{||\bar{\rho}||_{H^{4}}(s)+||\mathbf{u}||_{H^{4}}(s)}{(1+(t-s))^{\frac{\gamma}{4}}}\,||\mathbf{u}||_{H^{4+2\gamma}}^{1/2}(s)\,\left(||\bar{\rho}||_{H^{6+2\gamma}}^{1/2}(s)+||\mathbf{u}||_{H^{6+2\gamma}}^{1/2}(s) \right) \,ds.
\end{align*}
In particular, for $\kappa\in\N$ such that $\kappa\geq 5+2\gamma$ we have proved our goal.
\end{proof}

The following basic  lemma is stated without proof (for a proof see \cite{Elgindi}, Lemma 2.4).
\begin{lemma}\label{Basic_Lemma}
Let $\delta,q>0$, then:
$$\int_{0}^{t}\frac{ds}{(1+(t-s))^{\delta}\,(1+s)^{1+q}}\leq \frac{\mathcal{C}_{\delta,q}}{(1+t)^{\min\{\delta,1+q\}}}$$
\end{lemma}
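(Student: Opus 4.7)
The plan is to prove this by the classical splitting of the time integral at the midpoint $s = t/2$, which is the standard trick for convolution-type decay estimates. First I would dispose of the easy range $t \leq 2$: on any bounded interval, both sides of the claimed inequality are comparable to constants (the integrand is bounded on $[0,2]$ and $\int_0^{\infty}(1+s)^{-1-q}\,ds = 1/q$), so the inequality is immediate up to adjusting $\mathcal{C}_{\delta,q}$. Then I assume $t > 2$ and split
\[
\int_0^t \frac{ds}{(1+(t-s))^{\delta}(1+s)^{1+q}} = \int_0^{t/2}(\cdots)\,ds + \int_{t/2}^t(\cdots)\,ds =: J_1(t) + J_2(t).
\]

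For $J_1$, on $s \in [0,t/2]$ one has $1+(t-s) \geq 1 + t/2 \geq \tfrac{1}{2}(1+t)$, so the singular factor is bounded by $\lesssim (1+t)^{-\delta}$ uniformly, and what remains is $\int_0^{t/2}(1+s)^{-1-q}\,ds \leq 1/q$. Hence $J_1(t) \lesssim (1+t)^{-\delta}$. For $J_2$, on $s \in [t/2,t]$ one has $1+s \geq 1 + t/2 \geq \tfrac{1}{2}(1+t)$, so the factor $(1+s)^{-1-q}$ is bounded by $\lesssim (1+t)^{-(1+q)}$ uniformly, leaving
\[
J_2(t) \lesssim (1+t)^{-(1+q)}\int_{t/2}^t \frac{ds}{(1+(t-s))^{\delta}} = (1+t)^{-(1+q)}\int_0^{t/2}\frac{du}{(1+u)^{\delta}}
\]
after the change of variables $u = t-s$.

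The remaining calculus integral splits into three subcases: if $\delta > 1$ it is bounded by $1/(\delta-1)$, if $\delta = 1$ it equals $\log(1 + t/2) \lesssim \log(1+t)$, and if $\delta < 1$ it is $\lesssim (1+t)^{1-\delta}/(1-\delta)$. Combining with the prefactor $(1+t)^{-(1+q)}$, in the first subcase $J_2 \lesssim (1+t)^{-(1+q)}$; in the second, $J_2 \lesssim (1+t)^{-(1+q)}\log(1+t)$; and in the third, $J_2 \lesssim (1+t)^{-\delta - q}$. I would then verify in each subcase that $J_1 + J_2 \lesssim (1+t)^{-\min\{\delta,1+q\}}$: indeed $\min\{\delta, 1+q\}$ never exceeds either $\delta$ or $1+q$, and when $\delta \leq 1+q$ any extra factor of $(1+t)^{-q}$ or $(1+t)^{-q}\log(1+t)$ is harmless (since $q > 0$ makes such factors bounded in $t \geq 0$), while when $\delta \geq 1+q$ the bound $J_2 \lesssim (1+t)^{-(1+q)}$ is exactly what is needed.

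The main obstacle, and really the only subtle point, is the borderline case $\delta = 1$ where a logarithm appears in the estimate of $J_2$; this is handled by observing that since $q>0$, $(1+t)^{-q}\log(1+t)$ is bounded on $[0,\infty)$, so the log is absorbed into the constant $\mathcal{C}_{\delta,q}$ (which is allowed to depend on $\delta$ and $q$). Aside from this minor bookkeeping, the proof is a routine case-split and does not require any delicate tool beyond the midpoint splitting.
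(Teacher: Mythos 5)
Your proof is correct. The paper itself does not prove this lemma --- it is stated without proof, with a pointer to Lemma 2.4 of the cited work of Elgindi --- so there is no in-paper argument to compare against; your midpoint splitting at $s=t/2$ is the standard (and essentially the only) way to prove such convolution decay estimates, and you carry it out completely: the bound $1+(t-s)\geq \tfrac12(1+t)$ on $[0,t/2]$ and $1+s\geq\tfrac12(1+t)$ on $[t/2,t]$, the three subcases $\delta>1$, $\delta=1$, $\delta<1$ for the remaining calculus integral, and the absorption of the logarithm in the borderline case using $q>0$ are all handled correctly, and the final case check against $\min\{\delta,1+q\}$ is sound.
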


\begin{lemma}
Assume that $\mathfrak{E}_{\kappa+1}(t)\leq 6\varepsilon^2$ for all $t\in[0,T]$ where $\kappa\geq 5+2\gamma $ with $\gamma\in\N$. Then, we have:
\begin{align*}
||\bar{\rho}||_{H^4}+||\mathbf{u}||_{H^{4}(\Omega)}(t)&\lesssim \frac{\varepsilon}{(1+t)^{\frac{\gamma}{4}}} \qquad \text{for all} \quad t\in[0,T].
\end{align*}
\end{lemma}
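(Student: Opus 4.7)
The plan is to promote the integral inequality obtained in the preceding lemma into an absolute decay estimate via a Gronwall-type absorption argument. Define the bootstrap quantity
\[
F(t):=\sup_{0\leq s\leq t}(1+s)^{\gamma/4}\bigl(\|\bar{\rho}\|_{H^{4}}(s)+\|\mathbf{u}\|_{H^{4}}(s)\bigr).
\]
Our goal is to show $F(t)\lesssim \varepsilon$ uniformly on $[0,T]$. Two ingredients enter: (i) from the bootstrap hypothesis $\mathfrak{E}_{\kappa+1}(s)\leq 6\varepsilon^{2}$ we immediately get $\|\bar{\rho}\|_{H^{\kappa+1}}(s)+\|\mathbf{u}\|_{H^{\kappa+1}}(s)\lesssim \mathfrak{E}_{\kappa+1}(s)^{1/2}\lesssim \varepsilon$, since $\mathfrak{E}_{\kappa+1}$ dominates $\|\rho\|_{H^{\kappa+1}}^{2}$ and $\|\mathbf{u}\|_{H^{\kappa+1}}^{2}$ (after noting $1+\partial_y\tilde{\rho}\geq 1/2$ for small data); (ii) from the initial condition $\mathfrak{E}_{\kappa+1}(0)\leq \varepsilon^{2}$ and $\kappa+1\geq 4+\gamma$ we control the initial-data factor in Duhamel by $\varepsilon$.

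Plugging these into the integral inequality of the previous lemma yields
\[
\|\bar{\rho}\|_{H^{4}}(t)+\|\mathbf{u}\|_{H^{4}}(t)\lesssim \frac{\varepsilon}{(1+t)^{\gamma/4}}+\varepsilon \int_{0}^{t}\frac{\|\bar{\rho}\|_{H^{4}}(s)+\|\mathbf{u}\|_{H^{4}}(s)}{(1+(t-s))^{\gamma/4}}\,ds.
\]
Using $\|\bar{\rho}\|_{H^{4}}(s)+\|\mathbf{u}\|_{H^{4}}(s)\leq F(t)(1+s)^{-\gamma/4}$ inside the integral, multiplying through by $(1+t)^{\gamma/4}$, and taking the supremum in $t$ gives
\[
F(t)\leq C\varepsilon + C\varepsilon\, F(t)\,(1+t)^{\gamma/4}\int_{0}^{t}\frac{ds}{(1+(t-s))^{\gamma/4}(1+s)^{\gamma/4}}.
\]
Now apply Lemma \ref{Basic_Lemma} with $\delta=\gamma/4$ and $1+q=\gamma/4$; the hypothesis $\gamma>4$ in the main theorem is precisely what ensures $q>0$, so the lemma gives $\int_{0}^{t}\frac{ds}{(1+(t-s))^{\gamma/4}(1+s)^{\gamma/4}}\leq \mathcal{C}_{\gamma}(1+t)^{-\gamma/4}$, and the offending factor of $(1+t)^{\gamma/4}$ cancels.

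Therefore $F(t)\leq C\varepsilon + C'\varepsilon F(t)$, and for $\varepsilon\leq \varepsilon_{0}$ small enough that $C'\varepsilon<1/2$, absorption gives $F(t)\leq 2C\varepsilon$ uniformly on $[0,T]$, which is the desired estimate. The only place where the choice $\gamma>4$ is genuinely needed is in invoking Lemma \ref{Basic_Lemma}; the mildly subtle point is checking that the exponents match so that the $(1+t)^{\gamma/4}$ factors cancel exactly rather than leaving a logarithmic loss, but this is handled by picking $\delta=1+q=\gamma/4$. The remaining ingredients—the weighted-energy bootstrap hypothesis, the Duhamel bound from the quasi-linear decay, and the Gagliardo--Nirenberg interpolation already used in the preceding lemma—enter transparently.
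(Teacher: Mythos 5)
Your proof is correct and follows essentially the same route as the paper: both start from the integral inequality of the preceding lemma, use the bootstrap hypothesis to replace the high-norm factor and the initial-data factor by $C\varepsilon$, and invoke Lemma \ref{Basic_Lemma} with $\delta=1+q=\gamma/4$, which is exactly where $\gamma>4$ enters. The only cosmetic difference is that you close the self-referential inequality by absorbing the weighted supremum $F(t)$ directly (legitimate, since the bootstrap hypothesis guarantees $F(t)<\infty$ on $[0,T]$), whereas the paper runs a continuity argument improving the constant from $6C$ to $3C$; both mechanisms are standard and interchangeable here.
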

\begin{proof}
By hypothesis, $\mathfrak{E}_{\kappa+1}(t)\leq 6\varepsilon^2$ on the interval $[0,T]$. Then, we obtain that:
\begin{align*}
||\bar{\rho}||_{H^4}(t)+||\mathbf{u}||_{H^{4}}(t)&\leq \frac{C \,\varepsilon}{(1+t)^{\frac{\gamma}{4}}}+\int_{0}^{t}\frac{C\,\varepsilon}{(1+(t-s))^{\frac{\gamma}{4}}}\,\left(||\bar{\rho}||_{H^4}(s)+||\mathbf{u}||_{H^{4}}(s)\right)\,ds
\end{align*}
and in particular, there exist $0<T^{\star}(C)\leq T$ such that for $t\in[0,T^{\star}(C)]$ we have:
\begin{align*}
||\bar{\rho}||_{H^4}(t)+||\mathbf{u}||_{H^{4}}(t)&\leq 6 \,\frac{C\, \varepsilon}{(1+t)^{\frac{\gamma}{4}}}.
\end{align*}
\noindent
If we restrict to $0\leq t\leq T^{\star}(C)$ and we apply  the previous Lemma (\ref{Basic_Lemma}), we have:
\begin{align*}
||\bar{\rho}||_{H^4}(t)+||\mathbf{u}||_{H^{4}}(t)&\leq \frac{C \,\varepsilon}{(1+t)^{\frac{\gamma}{4}}}+\int_{0}^{t}\frac{C\,\varepsilon}{(1+(t-s))^{\frac{\gamma}{4}}}\,\frac{6\, C \,\varepsilon}{(1+s)^{\frac{\gamma}{4}}}\,ds\\
&\leq \frac{C \,\varepsilon}{(1+t)^{\frac{\gamma}{4}}}+\frac{\tilde{C}\,\varepsilon^2}{(1+t)^{\frac{\gamma}{4}}}.
\end{align*}
The last term in the expression above is quadratic in $\varepsilon$, it is enough to find $0<\epsilon<<1$ small enough so that
\begin{align*}
||\bar{\rho}||_{H^4}(t)+||\mathbf{u}||_{H^{4}}(t)&\leq 3 \,\frac{C\,\varepsilon}{(1+t)^{\frac{\gamma}{4}}}
\end{align*}
for all $t\in[0,T^{\star}(C)]$ and, by continuity, for all $t\in[0,T]$.\\
\end{proof}

\noindent
So, with $\gamma>4$ we have proved the integrable decay of $||\mathbf{u}||_{H^{4}(\Omega)}(t)$, then we be able to close our energy estimate.
We are now in the position to show how the bootstrap can be closed. This is merely a matter of collecting the conditions established above and showing that they can indeed be satisfied.\\

\noindent
In conclusion, if $\mathfrak{E}_{\kappa+1}(t)\leq 6\,\varepsilon^2$ for all $t\in[0,T]$ we have that
\begin{align*}
\mathfrak{E}_{\kappa+1}(t)&\leq \mathfrak{E}_{\kappa+1}(0)\exp\left(\int_{0}^{t}||\mathbf{u}||_{H^{4}}(s)\, ds\right).\\
&\leq\varepsilon^2\,\exp \left(\int_{0}^{t}\frac{C\varepsilon}{(1+s)^{\gamma+(1/4)^{-}}}\,ds\right)\leq \varepsilon^2 \exp(\tilde{C}\varepsilon)
\end{align*}
and $\mathfrak{E}_{\kappa+1}(t)\leq 3\,\varepsilon^2$ for all $t\in[0,T]$ if we consider $\varepsilon$ small enough, which allows us to prolong the solution and then repeat the argument for all time.\\

\noindent
\textbf{Funding:} The authors are supported by Spanish National Research Project MTM2014-59488-P and ICMAT Severo Ochoa projects SEV-2011-0087 and SEV-2015-556.  AC was partially supported by the ERC grant 307179-GFTIPFD and DL was supported by La Caixa-Severo Ochoa grant.\\

\noindent
\textbf{Acknowledgements:} The authors acknowledges helpful conversations with Tarek M. Elgindi.

\bibliography{bibliografia}
\bibliographystyle{plain}

\Addresses

\end{document}